\documentclass{amsart}
\usepackage{amssymb}
\usepackage[mathletters]{ucs}
\usepackage[utf8x]{inputenc}
\usepackage[T1]{fontenc}
\PrerenderUnicode{ρ}
\usepackage{lmodern} 
\usepackage[unicode]{hyperref}
%
%
\newtheorem{thm}{Theorem}[section]
\newtheorem{cor}[thm]{Corollary}
\newtheorem{lem}[thm]{Lemma}
\newtheorem{prop}[thm]{Proposition}
\theoremstyle{definition}
\newtheorem{defn}[thm]{Definition}
\newtheorem{notn}[thm]{Notation}
\theoremstyle{remark}
\newtheorem{rem}[thm]{Remark}

\newtheorem{ex}[thm]{Example}
\numberwithin{equation}{section}

\def\max{{\rm\bf max}}

\newcommand{\f}[1][p]{\mathbb{F}_{#1}}
\newcommand{\Aut}{\operatorname{Aut}}

\newcommand{\Sym}{\operatorname{Sym}}
\newcommand{\Id}{\operatorname{Id}}
\newcommand{\soc}{\operatorname{soc}}
\newcommand{\Bild}{\operatorname{Im}}
\newcommand{\enref}[1]{\ref{enum:#1})}
\newcommand*{\abs}[1]{\left| #1 \right|}
\newcommand{\zz}{\mathbb{Z}}
\newcommand{\ignore}[1]{}

\begin{document}

\title[Maschke for Sylow subgroups of the symmetric group]
 {The Maschke property for the Sylow $p$-sub\-groups of the symmetric group $S_{p^n}$}
\author{D.~J.~Green}
\address{%
Institut f\"ur Mathematik\\
Friedrich-Schiller-Universit\"at Jena\\
07737 Jena\\
Germany}

\email{david.green@uni-jena.de}

\author[L.~H\'ethelyi]{L.~H\'ethelyi}

\address{%
\'Obuda University\\
H-1034 Budapest\\
B\'ecsi \'ut 96/b.\\
Hungary}

\email{fobaba@t-online.hu}

\thanks{H\'ethelyi and Horv\'ath received support from the Hungarian Scientific Research Fund, Grant No. 77476.}
\author{E.~Horv\'ath}
\address{Department of Algebra\\
Budapest University of Technology and Economics\\
H-1111 Budapest\\
M\H uegyetem rkp. 3--9.\\
Hungary}
\email{he@math.bme.hu}
\subjclass{Primary 20D45 ; Secondary 20B35, 20D20}

\keywords{Maschke's Theorem, coprime action, symmetric group, Sylow $p$-subgroup, iterated wreath product, uniserial action}

\dedicatory{}

\begin{abstract}
The Sylow $p$-subgroups of the symmetric group $S_{p^n}$ satisfy the appropriate generalization of Maschke's Theorem to the case of a $p'$-group acting on a (not necessarily abelian) $p$-group. Moreover, some known results about the Sylow $p$-subgroups of $S_{p^n}$ are stated in a form that is true for all primes~$p$.
\end{abstract}

\maketitle
\section{Introduction}

\noindent
Several authors have considered generalizations of Maschke's Theorem in the context of groups acting on groups. For example, the case of coprime action on an elementary abelian $p$-group is found in~\cite{K-St}; and in~\cite{Berkovich:Maschke} Berkovich studied the case of abelian $V$, see also \cite[\S6]{Berkovich:Groups1}\@. Here we study a more general Maschke property for finite groups.

\begin{defn}[Maschke property]\label{2.1}
A $\pi$-group $V$ has the Maschke property if for every $\pi'$-group $G$ acting on $V$ the following property holds: if $N$ is a $G$-invariant normal subgroup of~$V$ which has a complement in $V$, then it has a $G$-invariant complement.
\end{defn}

\noindent
We call it a property rather than a theorem because it does not hold for all groups: one counterexample is an action of the cyclic group $C_3$ on the $2$-group $Q_8 * C_4$, a central product (Example~\ref{2.8})\@. But all abelian groups are Maschke \cite[\S6]{Berkovich:Groups1}, as are all metacyclic $p$-groups (Proposition~\ref{prop:abme})\@. Our main result is:

\begin{thm}
\label{thm:main}
For every prime $p$ the Sylow $p$-subgroups of the symmetric group $S_{p^n}$ have the Maschke property.
\end{thm}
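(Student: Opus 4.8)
The plan is to prove that the iterated wreath product $W_n := C_p \wr C_p \wr \cdots \wr C_p$ ($n$ copies), which is a Sylow $p$-subgroup of $S_{p^n}$, has the Maschke property, by induction on $n$. The cases $n \le 1$ are immediate, $C_p$ being abelian. For $n \ge 2$ I write $W_n = W_{n-1} \wr C_p = B \rtimes T$ with base group $B = W_{n-1}^p$ and top group $T \cong C_p$, fix a $p'$-group $G$ acting on $V := W_n$ together with a $G$-invariant normal subgroup $N$ of $V$ admitting a complement $H$, and look for a $G$-invariant complement to $N$.

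The starting point is that $V$ has a unique minimal normal subgroup, namely its centre $Z := Z(V)$, which is cyclic of order $p$: every minimal normal subgroup of a $p$-group is central of order $p$, and an induction along the wreath structure gives $Z(W_n) \cong C_p$. In particular $N \ne 1$ forces $Z \le N$, and each $W_{n-1}$ is directly indecomposable. Next, the case of abelian $N$ can be settled for an arbitrary $p$-group $V$ by a standard coprime-action argument: after fixing one complement, the complements to $N$ in $V$ are in bijection with the finite $p$-group $Z^1(V/N, N)$ compatibly with the (affine) action of $G$ on it, so since $G$ is a $p'$-group a fixed point exists, giving a $G$-invariant complement. (This is presumably isolated as a preliminary result; it already yields that abelian $p$-groups are Maschke.) Henceforth I assume $N$ is non-abelian, so $[N,N] \ne 1$.

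To run the induction I would use that for $n \ge 2$ the base group $B$ is characteristic in $V$ — for odd $p$ because $B$ is the unique maximal subgroup of $V$ needing the largest number of generators, while for $p = 2$ the assertion can be dispatched directly, coprime actions on $W_n$ then being very restricted. Thus $G$ acts on $B \cong W_{n-1}^p$, permuting the $p$ indecomposable direct factors via a $p'$-subgroup of $S_p$ and acting on each by automorphisms, and on $V/B \cong C_p$. One then distinguishes two cases. If $N \le B$, then $H \cap B$ is a $G$-invariant complement to $N$ in $B$ by Dedekind's law; the inductive hypothesis (via a lemma that a direct power of a Maschke group carrying a $p'$-action which permutes and acts on the factors still behaves well) supplies a $G$-invariant complement to $N$ inside $B$, which is then extended across $T$ by a coprime-complement argument. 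If instead $NB = V$, one passes to $V/B \cong C_p$ and lifts a complement, the crucial point being to arrange that the $G$-invariant complement lies inside the characteristic subgroup $B$ — which is possible precisely because in the relevant quotient the image of $B$ contains a complement to the image of $N$ — thereby reducing to the abelian case already handled.

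The main obstacle, and the place where the particular structure of $W_n$ is essential rather than incidental, lies in the non-abelian case: reducing $N$ one chief factor at a time and solving the resulting problem in a quotient does not by itself solve the original problem, since the pertinent extension need not split — a non-abelian $H^2$-type obstruction must be shown to vanish — and this has to be kept synchronised with the induction on $B$. Controlling it requires exploiting the uniserial action of $W_n$ on the chief factors of its normal subgroups, together with the consequent rigidity of the complemented normal subgroups of $W_n$ (for instance the complemented normal subgroups of $W_2$ are exactly $1$, $V$ and the maximal subgroups), which is what ultimately rules out the kind of failure exhibited by the action of $C_3$ on $Q_8 * C_4$.
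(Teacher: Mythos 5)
Your outline defers precisely the part that constitutes the theorem. In the case $N\le B$ you rely on two unproved ingredients. First, the ``lemma'' that the (equivariant) Maschke property passes from $W_{n-1}$ to the direct power $B=W_{n-1}^p$ with a $p'$-group permuting the factors: this is not a routine closure property --- a normal subgroup of a direct power need not split along the factors, and it is not even clear that a direct product of Maschke groups is Maschke --- so this lemma is comparable in difficulty to the theorem itself. Second, the step ``extended across $T$ by a coprime-complement argument'': a $G$-invariant complement of $N$ inside $B$ does not yield a complement of $N$ in $V$ unless you can produce a supplement of $B$ normalizing it, and this is exactly where the substance lies. In the paper this is controlled by the necessary condition of Lemma~\ref{lem:Ncomp} (comparing $NP_n'/P_n'$ with $T_{j+1}P_n'/P_n'$, via Lemmas \ref{lem:N2} and \ref{lem:j6}) together with the explicit construction from the permutations $\rho_i$ in Proposition~\ref{prop:ExCompl}; and the example following Example~\ref{ex:gamma-1} (the normal closure of $\gamma\sigma_2$ in $P_4$) shows that a normal subgroup can be complemented inside the base, here $B=T_1$, and still have no complement in the whole group, so the extension step must invoke the hypothesis that $N$ is complemented in all of $V$ in a structure-specific way that your sketch never supplies. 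Your closing paragraph concedes this (``a non-abelian $H^2$-type obstruction must be shown to vanish \ldots requires exploiting the uniserial action''), i.e.\ the core argument is missing, not merely to be ``synchronised''. (Also, the assertion that $H\cap B$ is a \emph{$G$-invariant} complement of $N$ in $B$ by Dedekind is wrong as written, since $H$ is an arbitrary complement; presumably a slip.)

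There are further gaps in the scaffolding. The base group $B=W_{n-1}^p$ is \emph{not} characteristic for $p=2$: already in $W_2\cong D_8$ an automorphism interchanges the base Klein four group with the other rank-two elementary abelian subgroup (cf.\ Remark~\ref{rem:TnotChar}); your fallback that coprime actions on $W_n$ are ``very restricted'' for $p=2$ amounts to part~\enref{AutPn-2} of Proposition~\ref{prop:AutPn}, which is itself nontrivial (the paper proves it using the characteristic subgroup $B\cong C_4^{2^{n-2}}$ of Proposition~\ref{prop:t2} and uniseriality). For odd $p$ your criterion (``unique maximal subgroup needing the largest number of generators'') is asserted without proof; the standard route is Weir's theorem that $A^{n-1}$ is the unique maximal abelian normal subgroup, whence the $T_j$ are characteristic. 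Finally, note that the paper's strategy is different in kind: it does not induct on $n$ against an arbitrary coprime $G$, but first uses Proposition~\ref{prop:AutPn} and Remark~\ref{rem:PHall} to conjugate every $p'$-group of automorphisms into one explicit Hall $p'$-subgroup $H\le N_{S_{p^n}}(P_n)$, and then proves the stronger Theorem~\ref{thm:MaschkeSpn} for that $H$ via the depth filtration $T_j$, Proposition~\ref{prop:j1}, and the uniserial-module analysis of Sections \ref{sec:u}--\ref{sec:rho}. The items you flag as ``the main obstacle'' are not loose ends; they are the theorem.
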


\noindent
Proving Theorem~\ref{thm:main} requires some properties of these Sylow $p$-subgroups which are presumably well known, but may never have been written down in an easily accessible form for all primes~$p$. The first such result is:

\begin{prop}[various authors]
\label{prop:AutPn}
Let $P_n$ be a Sylow $p$-subgroup of $S_{p^n}$ for any prime~$p$. Then:
\begin{enumerate}
\item \label{enum:AutPn-1}
$C_{S_{p^n}}(P_n) = Z(P_n)$  and $N_{S_{p^n}}(P_n)/P_n \cong C_{p-1}^n$.
\item \label{enum:AutPn-2}
$\Aut(P_n)$ has a normal Sylow $p$-subgroup, with factor group $C_{p-1}^n$.
\end{enumerate}
So for $p=2$, $\Aut(P_n)$ is a $2$-group and $P_n$ is self-normalizing in $S_{p^n}$.
\end{prop}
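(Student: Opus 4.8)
The plan is to argue by induction on $n$, using throughout the standard identification of $P_n$ with the $n$-fold iterated wreath product $C_p\wr\cdots\wr C_p$; it is convenient to realise $P_n$ as the group of permutations of $\mathbb{F}_p^n$ of the form
\[
(x_1,\dots,x_n)\longmapsto\bigl(x_1+c_1,\; x_2+c_2(x_1),\; \dots,\; x_n+c_n(x_1,\dots,x_{n-1})\bigr)
\]
with each $c_i\colon\mathbb{F}_p^{\,i-1}\to\mathbb{F}_p$ arbitrary, so that $P_n=P_{n-1}\wr C_p$ acting imprimitively with $p$ blocks of size $p^{n-1}$ (the level sets of $x_1$). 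For $n=1$ one has the classical $N_{S_p}(C_p)=C_p\rtimes C_{p-1}$, $C_{S_p}(C_p)=C_p=Z(C_p)$, $\Aut(C_p)\cong C_{p-1}$, so all three assertions hold. For the centralizer in general, transitivity of $P_n$ makes $C_{S_{p^n}}(P_n)$ semiregular and isomorphic to $N_{P_n}\bigl((P_n)_\alpha\bigr)/(P_n)_\alpha$ for a point $\alpha$; a direct computation in the triangular model shows this has order $p$, and since $Z(P_n)$ lies in it while the translations in the last coordinate already form a central subgroup of order $p$, we get $C_{S_{p^n}}(P_n)=Z(P_n)\cong C_p$.

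For the normalizer I would first show that $P_n$ admits a \emph{unique} system of $p$ blocks of size $p^{n-1}$: such systems correspond to subgroups of index $p$ in $P_n$ that contain a point stabilizer $(P_n)_\alpha$, hence to hyperplanes of $P_n/\Phi(P_n)\cong\mathbb{F}_p^n$ containing the image of $(P_n)_\alpha$, and that image has codimension exactly $1$. Thus $N_{S_{p^n}}(P_n)\le S_{p^{n-1}}\wr S_p$, the stabilizer of this block system. An element $(h_1,\dots,h_p;\sigma)$ of $S_{p^{n-1}}\wr S_p$ normalizes $P_{n-1}\wr C_p$ precisely when $\sigma\in N_{S_p}(C_p)$, each $h_i\in N_{S_{p^{n-1}}}(P_{n-1})$, and $h_1,\dots,h_p$ all lie in one coset of $P_{n-1}$ (the last condition being forced by conjugating a generator of the top $C_p$). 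Counting these, with the inductive hypothesis $\lvert N_{S_{p^{n-1}}}(P_{n-1})\rvert=\lvert P_{n-1}\rvert(p-1)^{n-1}$, yields $\lvert N_{S_{p^n}}(P_n)\rvert=\lvert P_n\rvert(p-1)^n$. Finally the diagonal torus $E=\{(x_i)\mapsto(\omega_i x_i):\omega_i\in\mathbb{F}_p^{\times}\}\cong C_{p-1}^n$ normalizes $P_n$ and meets it trivially, so it is a complement to $P_n$ and $N_{S_{p^n}}(P_n)/P_n\cong E\cong C_{p-1}^n$.

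For $\Aut(P_n)$, note that $P_n/[P_n,P_n]\cong C_p^n$ (e.g.\ $P_n^{\mathrm{ab}}\cong P_{n-1}^{\mathrm{ab}}\times C_p$), so $\Phi(P_n)=[P_n,P_n]$ and restriction gives $\rho\colon\Aut(P_n)\to\operatorname{GL}\bigl(P_n/\Phi(P_n)\bigr)=\operatorname{GL}_n(\mathbb{F}_p)$ whose kernel $K$ is a normal $p$-subgroup, by Burnside's basis theorem. It therefore suffices to prove that $\operatorname{Im}\rho$ has a normal Sylow $p$-subgroup with factor group $C_{p-1}^n$: its full preimage in $\Aut(P_n)$ is then a normal $p$-subgroup of $p'$-index, hence the normal Sylow $p$-subgroup, with quotient $C_{p-1}^n$ (Schur--Zassenhaus supplies the complement). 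Passing $E$ through $\rho$ shows $\operatorname{Im}\rho$ contains the diagonal torus $C_{p-1}^n$ of $\operatorname{GL}_n(\mathbb{F}_p)$, so the real work is the matching upper bound. For this I would construct, inductively via the wreath decomposition, a complete chain of \emph{characteristic} subgroups $\Phi(P_n)=F_0<F_1<\cdots<F_n=P_n$ with $\lvert F_i/F_{i-1}\rvert=p$ (taking $Z(P_n)$ at the bottom and pulling back the chain of $P_{n-1}$): then $\operatorname{Im}\rho$ lies in the Borel subgroup $U\rtimes D$ stabilizing the corresponding flag in $P_n/\Phi(P_n)$, so $\operatorname{Im}\rho\cap U$ is its normal Sylow $p$-subgroup and $\operatorname{Im}\rho/(\operatorname{Im}\rho\cap U)$ embeds in $D\cong C_{p-1}^n$ while containing the torus just found, hence equals it. The final remarks drop out: for $p=2$ the group $C_{p-1}^n$ is trivial, so $N_{S_{2^n}}(P_n)=P_n$ and $\Aut(P_n)$ is a $2$-group.

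The essential obstacle is the upper bound on $\operatorname{Im}\rho$. The full automorphism group is genuinely bigger than what $N_{S_{p^n}}(P_n)$ realises---already $\Aut(D_8)\cong D_8$ while $N_{S_4}(D_8)=D_8$ induces only $\Inn(D_8)$---so the normalizer computation cannot simply be transported, and the characteristic subgroups used to trap $\operatorname{Im}\rho$ must be identified intrinsically. Care is needed here because the natural candidate for $F_{n-1}$, the base group $P_{n-1}^{\,p}$, is characteristic in $P_n$ for odd $p$ but \emph{not} for $p=2$, where it has an interchanged partner; so the chain $F_\bullet$---or, for $p=2$, the direct verification that $\Aut(P_n)$ is a $2$-group---may need to be set up separately for the two cases.
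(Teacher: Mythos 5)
Your part (1) is correct and takes a genuinely different route from the paper: you compute $N_{S_{p^n}}(P_n)$ directly by induction, via the uniqueness of the block system with $p$ blocks and the characterization of which elements $(h_1,\dots,h_p;\sigma)\in S_{p^{n-1}}\wr S_p$ normalize $P_{n-1}\wr C_p$, whereas the paper deduces the normalizer statement from its bound on $\Aut(P_n)$ together with the explicit torus $H=\langle\eta_0,\dots,\eta_{n-1}\rangle$ and the self-centralizing property of $P_n$. The details you assert there (semiregularity of the centralizer, the codimension-one claim for the image of a point stabilizer in $P_n/\Phi(P_n)$, the three normalizing conditions and the resulting count) are all true and routine to verify, so this half of the argument stands.

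The genuine gap is in part (2), exactly where you locate the ``essential obstacle'': the complete chain of characteristic subgroups $\Phi(P_n)=F_0<F_1<\cdots<F_n=P_n$ with quotients of order $p$ is asserted rather than constructed. For odd $p$ such a chain can indeed be extracted from Weir's theorem that $A^{n-1}$ (hence each $T_j$) is characteristic, e.g.\ by pulling back an inductively built chain through $P_n/A^{n-1}\cong P_{n-1}$; but note that $Z(P_n)\le\Phi(P_n)$ for $n\ge 2$, so ``taking $Z(P_n)$ at the bottom'' cannot furnish a step of a chain refining $\Phi(P_n)<P_n$. For $p=2$ the natural candidates are not characteristic, as you note, and there the existence of an $\Aut(P_n)$-invariant complete flag in $P_n/\Phi(P_n)\cong\mathbb{F}_2^n$ is essentially equivalent to what is to be proved (a subgroup of $\operatorname{GL}_n(\mathbb{F}_2)$ fixes a complete flag if and only if it is a $2$-group, given that $\ker\rho$ is a $2$-group), so deferring it to ``direct verification'' begs the question. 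This is precisely the content the paper supplies: Proposition~\ref{prop:t2} produces a characteristic abelian subgroup $B$ (equal to $A^{n-1}$ for odd $p$, and isomorphic to $(C_4)^{2^{n-2}}$ for $p=2$) with $P_n/B\cong P_{n-1}$ acting uniserially; the paper then applies a stability-group argument to the characteristic chain $P_n>B>[P_n,B]>[P_n,[P_n,B]]>\cdots>1$, induction on $\Aut(P_{n-1})$, and a separate argument (cyclicity of the kernel of the Hall $p'$-subgroup's action on $P_n/B$) to bound the number of generators of the $p'$-part by $n$ --- it never needs a complete characteristic flag in the Frattini quotient. To finish your route you would have to prove the $p=2$ analogue of that characteristic-subgroup construction (or switch to the paper's chain); that is the one nontrivial input your sketch leaves open.
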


\begin{rem}
Part~\enref{AutPn-1} for odd primes was proved by C{\'a}rdenas and Lluis~\cite{CardenasLluis:SylowNormalizerSpn}\@. Both \cite{DmitrukSuschansky:ConstructionSylow} and \cite[Corollary A.13.3]{Berkovich:Groups1} say that P.~Hall proved the case $p=2$ in 1956\@. A modern treatment may be found in \cite[Appendix 13]{Berkovich:Groups1}\@.

Turning to~\enref{AutPn-2}, Bodnarchuk described the full structure of $\Aut(P_n)$ for odd primes~\cite{Bodnarchuk:SylowSymm}, whereas we have not yet located a proof for $p=2$.
\end{rem}

\noindent
That reduces the proof of Theorem~\ref{thm:main} to the case of coprime action in $S_{p^n}$, where we will prove the following result:

\begin{thm}
\label{thm:MaschkeSpn}
Let $P_n$ be a Sylow $p$-subgroup of $S_{p^n}$ for any prime~$p$. Then there is a Hall $p'$-subgroup $H$ of $N_{S_{p^n}}(P_n)$ which has the following property:
\begin{quote}
\noindent
If $N \trianglelefteq P_n$ is a normal subgroup which has a complement in~$P_n$, then $N$ has an $H$-invariant complement in $P_n$.
\end{quote}
\end{thm}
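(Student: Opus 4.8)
For $p=2$ we have $N_{S_{2^n}}(P_n)=P_n$, so $H=1$ and there is nothing to prove; assume henceforth $p$ odd. The plan is to induct on $n$ (equivalently on $|P_n|$) using the wreath recursion $P_n\cong P_{n-1}\wr C_p=B\rtimes\langle\sigma\rangle$, where $B=D_1\times\dots\times D_p$ with each $D_i\cong P_{n-1}$ and $\sigma$ cyclically permuting the $D_i$. The first task is to choose the Hall $p'$-subgroup compatibly with this recursion. By part~\enref{AutPn-1} of Proposition~\ref{prop:AutPn}, $N_{S_{p^n}}(P_n)/P_n\cong C_{p-1}^n$ is a $p'$-group, so Schur--Zassenhaus supplies a complement; one checks it can be taken of the form $H=H_0\times\langle\tau\rangle$, where $H_0\cong C_{p-1}^{n-1}$ is a diagonal copy of the Hall subgroup provided for $P_{n-1}$ (acting on the $D_i$ simultaneously, hence commuting with $\sigma$) and $\tau$ generates a $C_{p-1}$ acting faithfully on $\langle\sigma\rangle$ and permuting the coordinates of $B$ accordingly. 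Then $H$ normalises both $B$ and $\langle\sigma\rangle$, and its restriction to each $D_i$ is a Hall $p'$-subgroup of $N_{S_{p^{n-1}}}(P_{n-1})$.

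\noindent
\textbf{Structural facts.}
Since $\f^\times$ contains a primitive $(p-1)$-st root of unity, the group algebra $\f H=\f C_{p-1}^n$ is \emph{split} semisimple. Three consequences will be used. (i) Every $\f H$-composition factor of $P_n$ is one-dimensional; equivalently $H$ acts \emph{monomially}, meaning $P_n$ has an $H$-invariant chief series all of whose factors have order $p$. (ii) If $V$ is an elementary abelian $p$-group carrying such an $H$-action, then \emph{every} subgroup of $V$---not just every $H$-invariant one---has an $H$-invariant complement in $V$: induct on $\dim V$, splitting off a one-dimensional $H$-submodule not contained in the given subgroup. (iii) If $A$ is a central $H$-invariant subgroup of a group $K$ on which $H$ acts, and $A$ has a complement in $K$, then it has an $H$-invariant one, because the complements form a torsor under $\operatorname{Hom}(K/A,A)$ and $H^1(H,-)$ vanishes by coprimality. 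Finally, $Z(P_n)\cong C_p$ is the unique minimal normal subgroup of $P_n$ and lies in $\Phi(P_n)$, so every nontrivial normal subgroup of $P_n$ contains $Z(P_n)$. Fact~(i) is essential: the counterexample $Q_8*C_4$ under $C_3$ arises precisely because a chief factor there is $\f[2]C_3$-simple of dimension $2$, so that fact~(ii) has no analogue for it.

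\noindent
\textbf{The inductive step.}
Let $N\trianglelefteq P_n$ have a complement $C$. If $N=1$ or $N=P_n$ there is nothing to prove, so assume otherwise and put $A:=Z(P_n)\le N$, an $H$-invariant central subgroup of order $p$ with $A\cap C=1$. In $\bar P:=P_n/A$ the subgroup $\bar N:=N/A$ has complement $\bar C:=CA/A\cong C$, so the inductive hypothesis---applied in a class of coprime monomial actions closed under the operations in play (central quotients and passage to $H$-invariant subgroups)---gives an $H$-invariant complement to $\bar N$, whose preimage $\widetilde C\le P_n$ is $H$-invariant with $\widetilde C\cap N=A$ and $\widetilde C N=P_n$. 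An $H$-invariant complement to $A$ in $\widetilde C$ is simultaneously an $H$-invariant complement to $N$ in $P_n$; since $A$ is central and $H$-invariant in $\widetilde C$, fact~(iii) reduces the problem to showing that $A$ has \emph{some} complement in $\widetilde C$, i.e.\ that the central extension $1\to A\to\widetilde C\to\widetilde C/A\to1$ splits.

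\noindent
\textbf{The main obstacle.}
This last splitting is the crux and does not follow from generalities: $A=Z(P_n)\le\Phi(P_n)$ visibly does not split off $P_n$ itself, so one must genuinely use that the particular subgroup $N$ admitted a complement in $P_n$, and track that information through the passage to $\bar P$ and back to $\widetilde C$. Here the wreath structure re-enters: one analyses how $N$, $C$ and $\widetilde C$ meet the base group $B=D_1\times\dots\times D_p$ and project onto $P_n/B\cong C_p$, exploits the (uniserial) structure of the $\langle\sigma\rangle\rtimes\langle\tau\rangle$-action on $B$, and invokes the inductive hypothesis on the factors $D_i\cong P_{n-1}$ to force the extension class of $\widetilde C$ over $A$ to be zero. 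Carrying out this bookkeeping---and, in tandem, isolating the exact inductive class for which the argument closes up (monomial action together with a ``unique minimal normal subgroup''-type property surviving the central quotients)---is the technical heart of the proof.
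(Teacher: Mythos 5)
Your set-up is fine as far as it goes: the choice of $H$ agrees with the paper's (Lemma~\ref{lem:Covello} and Corollary~\ref{cor:isHall}), the $p=2$ reduction is correct, your facts (i)--(iii) are true, and the formal reduction of the inductive step to splitting the central extension $1 \to A \to \widetilde C \to \widetilde C/A \to 1$ is sound. But the proposal stops exactly where the theorem actually lives: you yourself defer both the splitting of that extension and the identification of an inductive class for which the argument closes, calling them ``the technical heart of the proof.'' That heart is missing, so this is a strategy sketch rather than a proof. Moreover, the particular way you hope to close it cannot work at the level of generality invoked. The class you gesture at --- coprime \emph{monomial} actions on $p$-groups with a unique minimal normal subgroup, closed under central quotients and passage to invariant subgroups --- already contains counterexamples to the (unstrengthened!) Maschke property: the group $V=(\zz/9\zz)^2\rtimes C_3$ of Example~\ref{2.9} is of maximal class (so $Z(V)\cong C_3$ is its unique minimal normal subgroup), the $C_2$-action there has all chief factors of order~$3$, and yet an invariant complemented normal subgroup has no invariant complement. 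So no hypothesis of the kind you list will make the induction on central quotients of $P_n$ go through; the specific structure of $P_n$ must enter precisely at the step you leave open. A second symptom: your passage to $\bar P=P_n/A$ discards exactly the information that distinguishes ``$N/K$ is complemented'' from ``$N$ is complemented'' --- the paper's example of the normal closure of $\gamma\sigma_2$ in $P_4$ (Section~\ref{sec:ex}) has $\bar N$ a direct summand of $\bar T_1$ while $N$ has no complement at all, which is the obstruction your $\widetilde C$-splitting would have to detect and for which you give no mechanism.

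For comparison, the paper's route is quite different and entirely explicit: it reduces $N$ of depth $j$ modulo $K=[T_j,T_j]$ (which lies in $N$ by Proposition~\ref{prop:j1}), analyses direct summands of the sum of uniserial modules $T_j/K\cong (A^j)^{n-j}$ (Lemmas \ref{lem:u1}--\ref{lem:u3}, \ref{lem:B-elAb}), isolates the extra necessary condition on $NP_n'/P_n'$ versus $T_{j+1}P_n'/P_n'$ (Lemmas \ref{lem:j6}, \ref{lem:Ncomp}, \ref{lem:notSmall}) --- this is exactly the data your central-quotient induction loses --- and then constructs an $H$-invariant complement by hand from the permutations $\rho_i$ and $P_j$ (Proposition~\ref{prop:ExCompl}). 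If you want to salvage your approach, you would need to prove a statement at least as strong as Lemma~\ref{lem:Ncomp} within your inductive framework, and the counterexample above shows this cannot come from the general facts (i)--(iii) alone.
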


\noindent
As $N$ is not required to be $H$-invariant, this is a strengthening of the Maschke property. For $p^n = 3^3$, Example~\ref{ex:gamma-1} constructs a complemented normal subgroup which does indeed fail to be $H$-invariant. This strengthening is false in the original context of Maschke's Theorem:

\begin{ex}\label{stronger}
The dihedral group $D_8$ has an irreducible ordinary representation in degree two. Every one-dimensional subspace of the representation has a complement, but by irreducibility there is no invariant complement.
\end{ex}

\noindent
We can now proceed to the proof of Theorem~\ref{thm:main}\@. One step of the proof will be used again later, so we turn it into a remark.

\begin{rem}
\label{rem:PHall}
Suppose that the finite group $G$ has a normal Sylow $p$-subgroup $Q$, with $G/Q$ is abelian. Observe that $G$ is solvable, so by a theorem of P.~Hall \cite[Thm 6.4.1, p.~231]{Gor}, $G$ has a Hall $p'$-subgroup $H$, and every $p'$-subgroup of $G$ is conjugate to a subgroup of~$H$. Observe that $H$ is isomorphic to $G/Q$.
\end{rem}

\begin{proof}[Proof of Theorem~\ref{thm:main}]
Since $C_{S_{p^n}}(P_n) = Z(P_n)$, the Hall $p'$-subgroup $H$ of Theorem~\ref{thm:MaschkeSpn} embeds in $\Aut(P_n)$.
Proposition~\ref{prop:AutPn} says that $H$ is also a Hall $p'$-subgroup of $\Aut(P_n)$, and so $\Aut(P_n)$ is solvable.
By Remark~\ref{rem:PHall}, every $p'$-subgroup of $\Aut(P_n)$ is conjugate to a subgroup of~$H$.
The result follows by Theorem~\ref{thm:MaschkeSpn}\@.
\end{proof}
\medskip

\noindent
We include a proof of Proposition~\ref{prop:AutPn} for two reasons: the $p=2$ case of part~\enref{AutPn-2} may not be in the literature; and the proof of Theorem~\ref{thm:MaschkeSpn} necessitates our constucting explicit generators for a Hall $p'$-subgroup of $N_{S_{p^n}}(P_n)$.

Recall that $P_n$ is the $n$-fold iterated wreath product $C_p \wr C_p \wr \cdots \wr C_p$, and so $P_n \cong C_p \wr P_{n-1}$. In our proof of Proposition~\ref{prop:AutPn}\,\enref{AutPn-2} we will use the following result, which is due to Weir for $p \neq 2$: 

\begin{prop}
\label{prop:t2}
Let $p$ be an arbitrary prime. Then $P_n$ has a characteristic abelian subgroup $B$ with the following properties:
\begin{enumerate}
\item \label{enum:t2-1}
$P_n/B \cong P_{n-1}$
\item \label{enum:t2-2}
The action of $P_{n-1} \cong P_n/B$ on $B$ is uniserial.
\end{enumerate}
\end{prop}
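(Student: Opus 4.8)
The plan is to realize $P_n \cong C_p \wr P_{n-1}$ concretely as a permutation group on $p^n$ points, viewed as $p$ blocks of size $p^{n-1}$, and to take $B$ to be the \emph{base group} of this wreath product, i.e. the direct product of $p$ copies of the ``sign-like'' generator. More precisely, write $P_n = B_0 \rtimes P_{n-1}$ where $B_0 = C_p^{\,p}$ is the base group and $P_{n-1}$ acts by permuting the $p$ coordinates (as a subgroup of the top $C_p$ which cyclically permutes the blocks — here one must be a little careful, since the top group of the wreath product $C_p \wr P_{n-1}$ is $P_{n-1}$ acting on $p$ coordinates only through its quotient $C_p$, so the base group is $C_p \wr C_p$-style). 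I would instead use the other associativity $P_n \cong P_{n-1} \wr C_p$ is \emph{not} what we want; the correct decomposition for a uniserial action is $P_n \cong C_p \wr P_{n-1}$ with base group $B := C_p^{\,p}$ and complement $P_{n-1}$ permuting the $p$ factors via the regular representation of $C_p$ composed with the natural map $P_{n-1} \twoheadrightarrow C_p$. Then (1) is immediate: $P_n/B \cong P_{n-1}$. For characteristicity of $B$, I would identify $B$ intrinsically, e.g. as $\Omega_1(Z(P_n'))$-type subgroup or — more robustly — as the last nontrivial term of some characteristic series; alternatively one can cite that $B$ is the unique abelian subgroup of its order that is normal, or use that $B$ is generated by all transpositions-of-$p$-cycles lying in a fixed conjugacy class. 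The cleanest route: $B = [\Omega_1(P_n), \ldots]$ or simply observe $B$ is characteristic because $\Aut(P_n)$ permutes the minimal blocks and $B$ is the kernel of the induced action on the block system; since the block system of $p$ blocks of size $p^{n-1}$ is canonical (it is the unique one preserved, as it corresponds to a characteristic subgroup), $B$ is characteristic.

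For part~\enref{t2-2}, uniseriality of the $P_{n-1}$-action on $B$: here $B \cong \f[p]^{\,p}$ as an $\f[p]$-vector space, and $P_{n-1}$ acts through its quotient $C_p$, which acts by cyclically permuting the $p$ coordinates. So the module is exactly the regular representation $\f[p][C_p]$ of the cyclic group $C_p$ of order $p$ over $\f[p]$. The key classical fact is that $\f[p][C_p] \cong \f[p][x]/(x^p - 1) = \f[p][x]/(x-1)^p$ is a \emph{uniserial} module: its unique composition series is given by the powers of the augmentation ideal (equivalently the powers of $(x-1)$), and there is exactly one submodule of each dimension $0, 1, \ldots, p$. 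I would prove this directly: setting $y = x - 1$, the submodules are precisely $(y^k)$ for $0 \le k \le p$, since $\f[p][x]/(x-1)^p$ is a quotient of a PID by a prime power and hence its ideals are totally ordered. Then I lift this to a $P_{n-1}$-composition series of $B$ — since $P_{n-1}$ acts through $C_p$, every $C_p$-submodule is automatically a $P_{n-1}$-submodule and conversely, so uniseriality transfers verbatim. This is the definition of a uniserial action (the $P_{n-1}$-submodule lattice of $B$ is a chain).

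The main obstacle I anticipate is \emph{pinning down which characteristic subgroup $B$ is} so that the statement holds cleanly for $p = 2$ as well, where several of the usual odd-$p$ arguments (Weir's original treatment) break down — e.g. $\Omega_1$ of various subgroups behaves differently, and commutator/power maps are entangled. The safest approach is the block-system argument: show that $P_n$, acting on $\{1, \ldots, p^n\}$, preserves a \emph{unique} nontrivial proper block system with blocks of size $p^{n-1}$, deduce that the kernel of the action on the blocks is characteristic, and check that this kernel is precisely the base group $B \cong C_p^{\,p}$ of the decomposition $P_n = C_p \wr P_{n-1}$ — one must verify $B$ is abelian (clear, it is elementary abelian) and that the induced action on blocks is faithful on $P_{n-1}$ and equals the regular action of $C_p$ on $p$ points. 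Uniqueness of the block system should follow from an order/orbit count or from the fact that the blocks correspond to the orbits of a characteristic subgroup. Once $B$ is correctly identified, parts (1) and (2) are short, the only real content being the elementary module-theoretic fact that $\f[p][C_p]$ is uniserial.
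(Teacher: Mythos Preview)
Your proposal has two genuine gaps.

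First, you have misidentified the base group. In $P_n \cong C_p \wr P_{n-1}$ the top group $P_{n-1}$ acts on $p^{n-1}$ points (its natural permutation degree), so the base group $A^{n-1}$ is $C_p^{\,p^{n-1}}$, not $C_p^{\,p}$, and $P_{n-1}$ acts \emph{faithfully} on these $p^{n-1}$ coordinates, not through a quotient $C_p$. Your uniseriality argument --- that $\f[C_p] \cong \f[x]/(x-1)^p$ is uniserial --- is correct but only settles the case $n=2$. For general $n$ one must show that the full permutation module $\f^{\,p^{n-1}}$ for $P_{n-1}$ is uniserial of length $p^{n-1}$; the paper does this inductively via Lemma~\ref{lem:preUni} (for $p=2$) and cites Weir for odd~$p$ (Lemma~\ref{lem:ux1}).

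Second, and more seriously, for $p=2$ the base group $A^{n-1}$ is \emph{not} characteristic in $P_n$: see Remark~\ref{rem:TnotChar}, where already for $n=2$ the two Klein four-subgroups of $D_8$ are swapped by an outer automorphism. So no block-system bookkeeping can make your $B = A^{n-1}$ work at $p=2$. (Your block argument is in any case circular: $\Aut(P_n)$ does not a priori act on the permuted set $\{1,\dots,p^n\}$, so ``the block system is canonical'' amounts to reasserting that $A^{n-1}$ is characteristic.) The paper instead takes, for $p=2$, the subgroup $B \cong C_4^{\,2^{n-2}}$ sitting inside $T_{n-2} \cong D_8^{\,2^{n-2}}$, shows it is the unique largest abelian normal subgroup of exponent four (hence characteristic), verifies $P_n/B \cong P_{n-1}$ directly, and proves uniseriality of the $P_{n-1}$-action on this non-elementary $B$ by induction using Lemma~\ref{lem:preUni}. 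For odd $p$ your choice $B = A^{n-1}$ is the right one and is characteristic by Weir, but the uniseriality argument still needs the correct module of rank $p^{n-1}$.
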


\begin{rem}
Weir~\cite{Weir} proved this for $p \neq 2$; for $B$ he used the subgroup which he calls $A^{n-1}$ (see Notation~\ref{notn:Weir} below), and which Huppert constructs in \cite[III.15.4 Satz a), p.~380]{Hup}\@. For $p=2$, Huppert constructs our $B$ in \cite[III.15.4 Satz b), p.~381]{Hup}\@. He only remarks that it is abelian, normal and not contained in~$A^{n-1}$; Covello shows that it is characteristic \cite[Thm 4.4.6]{Cov}\@. For $p^n = 2^3$, our $B$ is the group $\mathfrak{H}_7$ which Dmitruk constructs in~\cite[p.~ 124]{Dmitruk:Sylow2Symmetric}\@.
\end{rem}

\noindent
The proofs of Theorem~\ref{thm:MaschkeSpn} and Proposition~\ref{prop:t2} make use of the following result. See Section~\ref{sec:Tj} below for Weir's terminology $T_j$ and ``depth''\@.

\begin{prop}
\label{prop:j1}
Let $p$ be a prime.
If $N \trianglelefteq P_n$ has depth $j$, then $[T_j,T_j] \leq N$.
\end{prop}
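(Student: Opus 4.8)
The plan is to peel $P_n$ apart along $T_j$ and reduce everything to a commutator computation inside one wreath layer. Recall from Section~\ref{sec:Tj} that $T_j$ is the base group of the decomposition $P_n \cong P_j \wr P_{n-j}$: it is a direct product $S_1 \times \cdots \times S_m$ of $m = p^{n-j}$ copies $S_i \cong P_j$, which $P_n$ permutes transitively through its quotient $P_{n-j}$, and $T_{j-1}$ is the sub-product $U_1 \times \cdots \times U_m$, where $U_i \trianglelefteq S_i$ is the base group $(P_{j-1})^p$ in the decomposition $S_i \cong P_j \cong P_{j-1} \wr C_p$. So ``depth $j$'' means $N \le T_j$ but $N \not\le \prod_i U_i$. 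If $j = 1$ there is nothing to prove, since $T_1$ is abelian; so assume $j \ge 2$.

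The technical core is the single-layer claim: \emph{if $Q = P_{j-1} \wr C_p$ has base group $U = (P_{j-1})^p$, and $D \trianglelefteq Q$ with $D \not\le U$, then $[D, Q] = [Q, Q]$}. To prove it, choose $d \in D \setminus U$; after replacing $d$ by a suitable power we may assume $d = u_0 c$ with $c$ a generator of the complementing $C_p$, so that conjugation by $d$ cycles the $p$ factors of $U$ (up to inner twists). A short computation gives $[d, (v, 1, \dots, 1)] = (v, \tilde v^{-1}, 1, \dots, 1)$ with $\tilde v$ a conjugate of $v$; conjugating this by elements of $U$ supported in the second coordinate and taking products yields every $(1, [x, \tilde v], 1, \dots, 1)$, so the normal closure of $[d, U]$ in $Q$ contains the second-coordinate copy of $P_{j-1}'$, and running $d$ around the cycle gives all of $[U, U] = (P_{j-1}')^p$. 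Using the easy identity $[Q, Q] = [Q, U]$ — clear modulo $[U,U]$, since $Q/[U,U]$ has abelian base group with cyclic quotient — we get $[D, Q] \supseteq [Q, U] = [Q, Q]$, and the reverse inclusion is automatic.

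Granting the layer claim, the proof finishes quickly. Since $N \not\le \prod_i U_i$ there is an index $i_0$ with $\rho_{i_0}(N) \not\le U_{i_0}$, where $\rho_{i_0} \colon T_j \to S_{i_0}$ is the projection, and $\rho_{i_0}(N) \trianglelefteq S_{i_0}$ because $N \trianglelefteq P_n$ normalizes $T_j$. Conjugating elements of $N$ by the subgroup $S_{i_0} \le P_n$ shows $N \cap S_{i_0} \supseteq [\rho_{i_0}(N), S_{i_0}]$, which by the layer claim contains $[S_{i_0}, S_{i_0}]$. Thus the $S_{i_0}$-component of $[T_j, T_j] = \prod_i [S_i, S_i]$ lies in $N$; conjugating by elements of $P_n$ that carry $S_{i_0}$ onto the other $S_i$ (transitivity) puts every $[S_i, S_i]$ into $N$, so $[T_j, T_j] \le N$.

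I expect the layer claim to be the one real obstacle, and within it the delicate point is that the normal closure of $[d, U]$ really is all of $(P_{j-1}')^p$ rather than some proper subgroup: this is exactly where one uses that $D$ (not just the single element $d$) is normal, so that the $Q$-conjugates of $[d, U]$ are at one's disposal, together with the fact that $d$ acts as a single $p$-cycle on the factors of $U$. The remaining ingredients — the wreath descriptions of $T_j$ and $T_{j-1}$, the identity $[Q, Q] = [Q, U]$, and spreading one component over all of them by transitivity — are routine.
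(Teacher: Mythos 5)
Your argument is sound, but note first a purely notational reversal: in Section~\ref{sec:Tj} the paper's $T_j$ is the base group of $P_n \cong P_{n-j}\wr P_j$, i.e.\ a product of $p^j$ copies of $P_{n-j}$ permuted by $P_j$, and the series is decreasing in $j$; what you call $T_j$ (a product of $p^{n-j}$ copies of $P_j$ permuted by $P_{n-j}$) is the paper's $T_{n-j}$. Since you reverse the depth convention consistently — your hypothesis ``$N\le T_j$ but $N\not\le\prod_i U_i$'' says exactly that the smallest member of the filtration containing $N$ is the group whose derived subgroup you bound — the statement you prove is precisely Proposition~\ref{prop:j1}; only the labels need translating.

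Mathematically your route is genuinely different from the paper's, and it works. The paper argues by downward induction along the filtration: Lemma~\ref{lem:jx} gives that $N\cap T_{j+1}$ has depth $j+1$, so one may assume $[T_{j+1},T_{j+1}]\le N$, and then the generation of $T_j$ by $T_{j+1}$ and the $P_j$-conjugates of $\sigma_j$, together with an element $g\in N$ whose component in the relevant factor is $\sigma_j z$, finishes via commutator identities. You avoid both the induction and the explicit generators: your single-layer lemma (if $D\trianglelefteq Q=R\wr C_p$ with $D$ not inside the base $U=R^p$, then $[D,Q]=[Q,Q]$), applied to the projection of $N$ onto one direct factor $S_{i_0}$, plus the observation $[\,\text{projection of }N, S_{i_0}]\le N\cap S_{i_0}$ and transitivity of $P_n$ on the factors, gives the whole of $[T,T]$ at once. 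I checked the point you flag as delicate: your commutator $[d,(v,1,\dots,1)]$, conjugated by elements supported in the image coordinate and multiplied by its inverse, does yield all $(1,[x,\tilde v],1,\dots,1)$ with $\tilde v$ running over all of $R$ as $v$ does, so the normal subgroup $[D,Q]$ contains $[U,U]=(R')^p$; from there one can even shortcut your $[Q,Q]=[Q,U]$ step by noting that $Q/[D,Q]$ is generated by the central image of $D$ and the now-abelian image of $U$, hence is abelian. What the two approaches buy: yours isolates a clean, reusable wreath-product fact and needs no induction or generator bookkeeping, whereas the paper's version leans on Lemma~\ref{lem:jx} and the explicit $\sigma_i$, machinery it needs later anyway (for the $\rho_i$ and the Hall subgroup), so within the paper the inductive proof costs essentially nothing extra.
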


\begin{rem}
Weir~\cite[Thm~4]{Weir} proved this under the assumption that $p$ is odd and $N$ a partition subgroup. See also Dmitruk's \cite[Thm~5a)]{Dmitruk:Sylow2Symmetric} for the case where $p=2$ and $N$ is characteristic.
\end{rem}

\subsection*{Structure of the paper}
\noindent
In Section~\ref{sec:FirstExamples} we give some first examples and counterexamples for the Maschke property. Section~\ref{sec:sigma} recalls the identification of $P_n$ as an iterated wreath product, introduces the generators~$\sigma_i$ and recalls Weir's subgroup $A^{n-1}$. Next we recall Weir's filtration $T_j$ in Section~\ref{sec:Tj} and prove Proposition~\ref{prop:j1}, followed in the next section by the proof of Proposition~\ref{prop:t2}\@. After this, we construct the Hall subgroup for Theorem~\ref{thm:MaschkeSpn} in Lemma~\ref{lem:Covello} and prove Proposition~\ref{prop:AutPn}\@.

The proof of Theorem~\ref{thm:MaschkeSpn} occupies the next four sections. If $N \trianglelefteq P_n$ has depth~$j$, then it contains $K := [T_j,T_j]$ by Proposition~\ref{prop:j1}, and $N/K$ is an $\f P_j$-submodule of $T_j/K$. Now, $T_j/K$ is a direct sum of $n-j$ copies of the uniserial module $A^j$; and if $N$ has a complement in $P_n$, then $N/K$ is a direct summand of $T_j/K$. So in Section~\ref{sec:u} we suppose that $M$ is any uniserial module; we characterise which submodules of $M^n$ have complements, and show that if $N$ has a complement then it has one of the form $M_Z$. In Sections \ref{sec:MZ}~and \ref{sec:9} we apply this general theory in the case $M^n = T_j/K$. In particular we establish a necessary condition on~$N$ (Lemma~\ref{lem:Ncomp}, which builds on Lemmas \ref{lem:N2}~and \ref{lem:j6}), without which $N$ cannot have a complement, even if $N/K$ does. Finally in Section~\ref{sec:rho} we construct certain permutations~$\rho_i$ and use them to show that if $N/K$ has a complement and $N$ satisfies the necessary condition of Lemma~\ref{lem:Ncomp}, then the complement $M_Z$ of $N/K$ lifts to an $H$-invariant complement of~$N$, concluding the proof of Theorem~\ref{thm:MaschkeSpn}\@.

The paper ends with an extensive selection of examples, and the application of our results to Weir's partition subgroups. In an appendix we briefly consider the largest abelian subgroups of~$P_n$.

\section{The Maschke property: first examples}
\label{sec:FirstExamples}

\noindent
First we give counterexamples of $p$-rank two for $p=2,3$. The counterexample for $p=3$ is also of maximal class.

\begin{ex}\label{2.8}
For $p=2$ let $V$ be the central product $V = Q_8*C_4$. That is, $V = \langle i,j,k,x \rangle$ with $x$ central, $x^2 = -1$ and $\abs{V} = 16$. Observe that $V$ has $2$-rank two.

There is an automorphism $\phi$ of order $3$ which acts on the set $\{i,j,k,x\}$ as the $3$-cycle $(i \, j \, k)$. So $G = \langle \phi \rangle \cong C_3$ acts coprimely on~$V$, and $N = Q_8 = \langle i,j,k\rangle$ is a $G$-invariant normal subgroup which has a complement: each of the six involutions $\pm ix$, $\pm jx$, $\pm kx$ generates a complement. But $\phi$ acts on this set of six complements as a permutation of type $3^2$, and there are no other complements. So $Q_8 * C_4$ does not have the Maschke property.
\end{ex}

\noindent
\begin{ex}\label{2.9}
Let $V$ be the semidirect product $V = (\zz/9\zz)^2 \rtimes C_3$, where the action of $C_3 = \langle x \rangle$ on $(\zz/9\zz)^2$ is as follows:
\[
{}^x v = \begin{pmatrix} 1 & -3 \\ 1 & -2 \end{pmatrix} \begin{pmatrix} v_1 \\ v_2 \end{pmatrix} \quad \text{for} \quad v = \begin{pmatrix} v_1 \\ v_2 \end{pmatrix} \, .
\] 
Observe that $V$ has order $3^5$; it has $3$-rank two; and it is of maximal class.

Now, $V' = \{ v \in (\zz/9\zz)^2 \mid v_1 \in 3\zz/9\zz \}$, which has order $3^3$. Consequently, $N := \langle V', x \rangle$ is a normal subgroup of order~$3^4$. Moreover,
\[
v + {}^x v + {}^{x^2} v = 0 \quad \text{for every $v \in (\zz/9\zz)^2$,}
\]
and so $(v,x)$ has order $3$ for every $v \in (\zz/9\zz)^2$. So $C_v := \langle (v,x) \rangle$ is cyclic of order $3$ for every $v \in (\zz/9\zz)^2$; and $C_v$ is a complement of $N$~in $V$ for every $v \in (\zz/9\zz)^2 \setminus V'$. As every $v \in (\zz/9\zz)^2 \setminus V'$ has order~$9$, it follows that every complement of $N$ in~$V$ is a $C_v$ with $v \in (\zz/9\zz)^2 \setminus V'$.

By construction of $V$, $\alpha(v,x) := (-v,x)$ defines an automorphism of~$V$, of order~$2$. Then $N$ is $\alpha$-invariant, and $\alpha(C_v) = C_{-v}$. As $C_v \neq C_{-v}$ for $0 \neq v \in (\zz/9\zz)^2$, it follows that $N$ has no $\langle \alpha \rangle$-invariant complement in~$V$. So $V$ does not have the Maschke property.
\end{ex}

\noindent
The following lemma will be used to prove the metacyclic and rank two cases of Proposition~\ref{prop:abme} below.

\begin{lem}
\label{lem:regular}
If $G$ acts coprimely on the regular $p$-group~$V$, and if $N \trianglelefteq V$ is a $G$-invariant normal subgroup which has a cyclic complement in~$V$, then $N$ has a $G$-invariant complement in~$V$.
\end{lem}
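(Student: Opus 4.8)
The plan is to use the theory of regular $p$-groups to reduce the problem to an abelian (indeed elementary abelian) quotient situation, where one can apply coprime-action Maschke results directly, and then lift back. Let $C$ be a cyclic complement of $N$ in $V$, so $V = NC$ and $N \cap C = 1$; write $|C| = p^k$. Since $V$ is regular, the subgroup $\mho_1(V) = \{ v^p \mid v \in V\}$ is an honest subgroup, and more generally the power maps behave well: $|V : \mho_1(V)|$ equals the order of $V/\Phi(V)$ only in the elementary abelian case, but the key regularity fact I want is that for regular $p$-groups the map $v \mapsto v^p$ induces, on suitable sections, a homomorphism, and $\mho_i$ and $\Omega_i$ interact predictably. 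The first step is therefore to record the relevant regularity facts (from Hall's or Huppert's treatment of regular $p$-groups): $\mho_i(V)$ is a subgroup, $|\Omega_i(V)| \cdot |\mho_i(V)| $-type identities, and that subgroups and quotients of regular $p$-groups are regular and $G$-invariantly so.

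Next I would set up an induction on $|C| = p^k$. If $k = 1$, then $C$ has order $p$ and we are complementing $N$ by something of prime order; here $V/N \cong C_p$ is elementary abelian, $N = \Phi(V) \cdot(\text{something})$... more carefully: $V/N$ is cyclic of order $p$, so $N \supseteq \mho_1(V) [V,V]$... no — $N$ just has index $p$, hence is normal and contains $\Phi(V)$. Then complements of $N$ correspond to complements of the trivial subgroup in the $1$-dimensional $\f[p]$-space $V/N$ that lift; since $G$ acts coprimely on $V/N$, pick any $G$-invariant line — but $V/N$ is already $1$-dimensional, so $G$ acts trivially on it, and I need an actual element of order $p$ outside $N$ on which $G$ acts nicely. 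Here I use coprime action: the set of order-$p$ elements generating a complement is nonempty (our hypothesis), $G$ permutes it, and I want a fixed point. The clean route is: by regularity the elements of order dividing $p$ form the subgroup $\Omega_1(V)$, and $\Omega_1(V)/(\Omega_1(V)\cap N)$ is a nontrivial $G$-module over $\f[p]$; by coprime Maschke for elementary abelian groups (cited from \cite{K-St}) this quotient has a $G$-invariant complement, i.e.\ a $G$-fixed complement mod $N$, which lifts (again using $\Omega_1$ is a $G$-invariant subgroup of exponent $p$, where coprime action gives a $G$-invariant complement to $\Omega_1(V)\cap N$ inside $\Omega_1(V)$ directly).

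For the inductive step with $|C| = p^k > p$, I would pass to the quotient $\bar V = V/\mho_{k-1}(V)$ (a $G$-invariant subgroup since $\mho$ is characteristic), in which the image $\bar C$ is cyclic of order $p^{k-1}$ — here regularity is essential to control $\mho_{k-1}(C)$ versus $\mho_{k-1}(V)$ and to guarantee $\bar C$ still complements $\bar N := N\mho_{k-1}(V)/\mho_{k-1}(V)$. By induction $\bar N$ has a $G$-invariant complement $\bar D$ in $\bar V$; let $D$ be its preimage, a $G$-invariant subgroup with $V = ND$ and $N \cap D = \mho_{k-1}(V) \cap N$, which is the bottom of a cyclic-complement situation one rung down. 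The remaining task is to complement $N \cap D$ inside $D$ $G$-invariantly: $D/N(\cap D)$-arguments plus regularity show $D$ has a cyclic complement to $N\cap D$ coming from $C$, and $N \cap D \le \mho_{k-1}(V)$ pushes us into the base case applied inside $D$. Splicing $D$'s complement of $N\cap D$ with $\bar D$ yields the desired $G$-invariant complement of $N$ in $V$.

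The main obstacle is the bookkeeping in the inductive step: ensuring at each stage that the relevant power subgroups $\mho_i$ of $C$, of $N$, and of $V$ line up so that images of complements are genuine complements (this is exactly where regularity of $V$ is used and cannot be dropped), and that the $G$-action is respected throughout because every subgroup in sight ($\Omega_i(V)$, $\mho_i(V)$, $N$) is characteristic or $G$-invariant. A secondary subtlety is the interaction of "cyclic" with the quotients — one must check the complement stays cyclic (or at least that enough of its cyclic structure survives) after factoring by $\mho_{k-1}$, which again is a regularity statement about $\mho_i(\langle c\rangle)$. Once those power-subgroup identities are pinned down, the coprime-action input is the standard elementary abelian Maschke theorem applied to $\Omega_1$-sections, and the rest is assembly.
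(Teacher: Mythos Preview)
Your inductive approach is genuinely different from the paper's. The paper does not induct on $|C|$: it passes once to $V_1 := \Omega_\ell(V)$ (where $p^\ell = |L|$), cites \cite[Prop.~5.2]{HH} to obtain a $G$-invariant \emph{cyclic} subgroup $C' \leq V_1$ with $N_1 C' = V_1$ (a supplement, not yet a complement), and then uses the regularity fact that $\Omega_\ell(V)$ consists exactly of the elements of order dividing $p^\ell$ to force $|C'| = p^\ell$ and hence $C' \cap N_1 = 1$. The external citation carries the real weight.

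Your base case has a genuine gap. You pass to $\Omega_1(V)$ and invoke ``coprime Maschke for elementary abelian groups'' to complement $\Omega_1(V) \cap N$ inside $\Omega_1(V)$. But regularity only gives that $\Omega_1(V)$ has exponent~$p$; it need not be abelian --- for odd~$p$, any extraspecial group of exponent~$p$ is regular and equals its own $\Omega_1$. So the elementary-abelian Maschke theorem does not apply, and what you are asserting is exactly the $k=1$ instance of the lemma itself, for the possibly nonabelian group $\Omega_1(V)$. (This case \emph{can} be handled, by a separate induction on $|\Omega_1(V)|$: pass to the abelianisation, apply Maschke there to get a $G$-invariant $E$ with $N_1 E = \Omega_1(V)$ and $N_1 \cap E = \Omega_1(V)'$, then recurse on the strictly smaller pair $(E,\Omega_1(V)')$; but that is not the argument you give.) Your inductive step has a related unsecured point: the claim $|\bar C| = p^{k-1}$ requires $C \cap \mho_{k-1}(V) = \mho_{k-1}(C)$, which can fail when $\exp(V) > p^k$ --- restricting first to $\Omega_k(V)$, as the paper does with $\Omega_\ell(V)$, is precisely what repairs this.
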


\begin{proof}
Let $L$ be a cyclic complement of $N$~in $V$, and let $\abs{L} = p^{\ell}$. Set $V_1 := \Omega_{\ell}(V) = \langle g \in V \mid g^{p^{\ell}} = 1 \rangle$, which is characteristic and hence $G$-invariant. Then $L \leq V_1$, and $L$ is a complement in $V_1$ to  $N_1 := N \cap V_1$. Any $G$-invariant complement to $N_1$ in $V_1$ will be a complement to $N$~in $V$ too.

As $N_1$ has a cyclic complement, \cite[Prop 5.2]{HH} says that there is a $G$-invariant cyclic subgroup $C \leq V_1$ with $N_1 C = V_1$. We want $N_1 \cap C = 1$. Now, $\abs{C \::\: C \cap N_1} = \abs{V_1 \::\: N_1} = \abs{L} = p^{\ell}$, so if $N_1 \cap C \neq 1$ then the cyclic group $C \leq V_1$ has order${} > p^{\ell}$. But as $V$ is regular and $V_1 = \Omega_{\ell}(V)$, \cite[10.5 Hauptsatz p. 324]{Hup} says that $V_1 = \{g \in V \mid g^{p^{\ell}} = 1\}$. So $C \cap N_1 = 1$ and $C$ is the desired $G$-invariant complement.
\end{proof}

\begin{prop}\label{prop:abme}
Let $V$ be a finite group. If
\begin{enumerate}
\item \label{enum:abme-1}
$V$ is abelian; or
\item \label{enum:abme-2}
$V$ is a metacyclic $p$-group; or
\item \label{enum:abme-3}
$V$ is a $p$-group of $p$-rank two, for $p > 3$
\end{enumerate}
then $V$ has the Maschke property.
\end{prop}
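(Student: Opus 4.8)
The plan is to treat the three cases separately, using Lemma~\ref{lem:regular} as the workhorse for cases \enref{abme-2} and \enref{abme-3}, and citing Berkovich for case~\enref{abme-1}. For \enref{abme-1}, namely $V$ abelian, the Maschke property is exactly \cite[\S6]{Berkovich:Groups1}, so nothing remains to be done beyond the citation. (One could also reprove it directly: if $V$ is abelian and $N$ is a $G$-invariant direct summand, then coprimality of the $G$-action gives $V = [V,G] \times C_V(G)$ and more generally lets one average a projection $V \to N$ over $G$ to obtain a $G$-equivariant retraction, whose kernel is the desired $G$-invariant complement; but since this is standard I would simply quote it.)

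For case~\enref{abme-2}, let $V$ be metacyclic, so $V$ has a normal cyclic subgroup $C \trianglelefteq V$ with $V/C$ cyclic, and suppose $N \trianglelefteq V$ is $G$-invariant with a complement $L$ in $V$. Then $L \cong V/N$ is a quotient of the metacyclic group $V$, hence is itself metacyclic; I want to arrange that $L$ can be taken cyclic so that Lemma~\ref{lem:regular} applies (recall every metacyclic $p$-group with $p$ odd is regular, and for $p=2$ one argues separately or notes $V$ has a cyclic subgroup of index~$2$ or uses the small-order structure). The key reduction: pass to $V/[V,V]N \cap (\text{suitable term})$ is not quite right, so instead I would split off the computation into two sub-cases according to whether $N \supseteq [V,V]$ or not. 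If $N \supseteq V'$ then $V/N$ is abelian and a complement $L$ is abelian of rank at most~$2$; intersecting with $\Omega_\ell(V)$ as in the proof of Lemma~\ref{lem:regular} and using that a regular $p$-group satisfies $\Omega_\ell(V) = \{g : g^{p^\ell}=1\}$ reduces to a metacyclic, hence $2$-generated, abelian situation handled by case~\enref{abme-1}. If $N \not\supseteq V'$, then because $V/V'$ is $2$-generated and $V' $ is cyclic, a dimension count forces $L$ to be cyclic, and Lemma~\ref{lem:regular} finishes the job once regularity is in hand. For $p=2$ the non-regular metacyclic $2$-groups (dihedral, quaternion, semidihedral, and the modular group) must be inspected by hand; in each the coprime group $G$ acting is trivial or tiny and the statement is immediate, but this bookkeeping is the part I expect to be the main obstacle.

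For case~\enref{abme-3}, let $p > 3$ and $V$ a $p$-group of $p$-rank~$2$, with $N \trianglelefteq V$ $G$-invariant and complemented by $L \cong V/N$. A $p$-group of $p$-rank two has every abelian subgroup $2$-generated, but $L$ need not be abelian; however, $L$ embeds (as a complement) into $V$, so $L$ itself has $p$-rank at most~$2$. The crucial input is that for $p>3$ a $p$-group of $p$-rank two is regular — this is where the hypothesis $p>3$ is used, via the structure theory of $p$-groups of rank two (e.g.\ they are metacyclic or of maximal class of small order, and in either case regular when $p>3$; note Example~\ref{2.9} shows $p=3$ genuinely fails). Again I would split on whether $L$ is cyclic: if it is, Lemma~\ref{lem:regular} applies directly to~$V$. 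If $L$ is non-cyclic of rank two, then $V/N$ is non-cyclic, and I would pass to $W := \Omega_1(V)$, which is characteristic hence $G$-invariant, contains a conjugate of $L$'s socle, and in a regular $p$-group is elementary abelian of rank~$\le 2$; reducing modulo the appropriate characteristic subgroup turns the problem into a Maschke statement for an elementary abelian group of rank $\le 2$, which is case~\enref{abme-1}. The delicate point throughout case~\enref{abme-3} is verifying that the characteristic subgroups one quotients by interact correctly with $N$ and $L$ — i.e.\ that the induced $N$ and $L$ still complement each other after reduction — and that the regularity hypothesis is genuinely available; that verification, together with the $p=2$ metacyclic bookkeeping in case~\enref{abme-2}, is the main obstacle, whereas the coprime-action ingredient is entirely supplied by Lemma~\ref{lem:regular} and \cite[Prop 5.2]{HH}.
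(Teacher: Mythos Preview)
Your overall strategy---reduce to Lemma~\ref{lem:regular} by showing the complement $L$ is cyclic---is the paper's strategy too, but your execution has real gaps in both nontrivial cases.

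In case~\enref{abme-2} your dichotomy does not work. If $N \not\supseteq V'$ then $(V/N)' = V'N/N \neq 1$, so $L \cong V/N$ is \emph{nonabelian} and certainly not cyclic; no dimension count rescues this. The paper splits instead on whether $[V,N]=1$. When $[V,N]\neq 1$ one has $1 \neq [V,N] \leq N \cap V' \leq N \cap K$ (with $K \trianglelefteq V$ cyclic and $V/K$ cyclic), so $N$ contains the unique minimal subgroup of the cyclic group~$K$; since $N \cap L = 1$ this forces $L \cap K = 1$, whence $L \hookrightarrow V/K$ is cyclic and Lemma~\ref{lem:regular} applies (regularity from \cite[III.10.2]{Hup}). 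When $[V,N]=1$, centrality of $N$ gives $V' = L'$, so one passes to the abelian group $V/L'$ and invokes case~\enref{abme-1}. Your plan for $p=2$ is also unworkable: a $2$-group is regular only if it is abelian, so there are infinitely many non-regular metacyclic $2$-groups, not just the four families you list. The paper instead quotes Mazurov~\cite{Mazurov:MetacyclicSylow}: any nontrivial coprime action on a nonabelian metacyclic $2$-group forces $V = Q_8$, and in $Q_8$ only $N=1$ and $N=V$ are complemented.

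In case~\enref{abme-3} you miss the key observation that $L$ is \emph{always} cyclic, making your non-cyclic sub-case (whose sketch is in any case too vague to constitute a proof) unnecessary. The argument: set $F = N \cap \Omega_1(Z(V))$, which is nontrivial since $1 \neq N \trianglelefteq V$. If $E \leq L$ is elementary abelian then $EF$ is elementary abelian (as $F$ is central) with $E \cap F \leq L \cap N = 1$, so $EF$ has rank strictly larger than~$E$. Since $V$ has $p$-rank~$2$, this forces $E$ to have rank at most~$1$; hence $L$ has $p$-rank~$1$ and is cyclic by \cite[III.8.2]{Hup}. Regularity of $V$ for $p \geq 5$ then comes from Blackburn's classification \cite[III.12.4]{Hup} of rank-two $p$-groups, after which Lemma~\ref{lem:regular} finishes directly.
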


\begin{rem}
Bettina Wilkens has shown us an argument demonstrating that every metacyclic finite group has the Maschke property.
\end{rem}

\begin{proof}
Suppose that $G$ acts coprimely on~$V$, and that the $G$-invariant normal subgroup $N \trianglelefteq V$ has complement $L$~in $V$. Assume $N \neq 1$.
\item \underline{$V$ abelian}: This is known, but we give the proof for the sake of completeness. Consider $V$ as a $\zz_{(p)}$-module. As $V = N \times L$, there is a $\zz_{(p)}$-linear $\pi \colon V \rightarrow N$ with $\pi\vert_N = \Id$. Since $\abs{G}$ is a unit in $\zz_{(p)}$, the usual proof of Maschke's Theorem means that $\pi$ can be chosen to be $\zz_{(p)}G$-linear.
\item \underline{$V$ metacyclic, $p=2$}: By \cite[Lemma~1]{Mazurov:MetacyclicSylow}, if $G$ acts nontrivially and $V$ is nonabelian then $V = Q_8$. But then only $N=1$ and $N=V$ have complements.
\item \underline{$V$ metacyclic, $p$ odd}:
%
If $[V,N]=1$ then $V'=L'$. Hence $V/L'$ is abelian, and there is $L' \leq W \trianglelefteq V$ with $W/L'$ a $G$-invariant complement to $NL'/L' \cong N$. So $W$ is a $G$-invariant complement to~$N$.

So we assume $[V,N] \neq 1$.
Let $K \trianglelefteq V$ be cyclic with $V/K$ cyclic, so $V' \leq K$ and $V$ is a regular $p$-group by \cite[III.10.2 Satz p. 322]{Hup}\@.
Since $N$ is normal and $V' \leq K$ we have $K \cap N \neq 1$. As $K$ is cyclic and $N \cap L = 1$, it follows that $L \cap K = 1$. Therefore $L \cong LK/K \leq V/K$ is cyclic, and the result follows by Lemma~\ref{lem:regular}\@.
\item \underline{$V$ has $p$-rank two, $p \geq 5$}:
Set $F = N \cap \Omega_1(Z(V))$; from $N \neq 1$ it follows that $F \neq 1$.
If $E \leq L$ is elementary abelian then $E F$ is elementary abelian too. As $E \cap N = 1$ if follows that $EF$ has rank larger than that of~$E$. It follows that $L$ has $p$-rank one. So $L$ is cyclic, by \cite[III.8.2 Satz p.~310]{Hup}\@.

By Lemma~\ref{lem:regular} it suffices to show that $V$ is regular. By a theorem of Blackburn \cite[III.12.4 Satz p.~343]{Hup}, $V$ satisfies one of three conditions. In Blackburn's Case~(1), $V$ is metacyclic. In Case~(2), $V' = \langle Z^{p^{n-3}} \rangle$ is cyclic; and in Case~(3), $V$ has nilpotency class $3 < p$. So $V$ is regular in cases (2)~ and (3) by a) and c) of \cite[III.10.2 Satz p. 322]{Hup}\@.
\end{proof}

\section{The iterated wreath product}
\label{sec:sigma}
\noindent
Recall that if $S \leq \Sym(X)$ and $G \leq \Sym(Y)$ are permutation groups acting on finite sets, then there is a wreath product group
\[
G \wr S = G^{\abs{X}} \rtimes S \leq \Sym(Y \times X)
\]
with $S$-action given by $({}^{\sigma} \underline{g})_x = g_{\sigma^{-1}(x)}$ for $\sigma \in S$, $\underline{g} \in G^{\abs{X}}$ and $x \in X$. By \cite[I.15.4 Hilfssatz, p.~96]{Hup} we have associativity: $G \wr (S \wr T) \cong (G \wr S) \wr T$.

Let $p$ be a prime number. The cyclic group $C_p$ embeds in the symmetric group $S_p$ as the subgroup generated by a $p$-cycle, and so the $n$-fold iterated wreath product
\[
P_n := \underbrace{C_p \wr C_p \wr \cdots \wr C_p}_{\text{$n$ copies of $C_p$}}
\]
embeds in $S_{p^n}$. Kaloujnine (in~\cite{Kaloujnine:pSylowSymmetric}; see also \cite[III.15.3 Satz, p.~378]{Hup}) proved that $P_n$ is a Sylow $p$-subgroup of~$S_{p^n}$.

We shall treat $S_n$ as the group of permutations of $\{0,1,\ldots,n-1\}$ rather than of $\{1,2,\ldots,n\}$. Using the $p$-adic representation $a = \sum_{i=0}^{n-1} b_i p^{n-1-i}$ we can identify $a \in \{0,1,\ldots,p^n-1\}$ with $(b_0,b_1,\ldots,b_{n-1}) \in \f^n$. In particular, we may identify $S_{p^n}$ with the symmetric group $\Sym(\f^n)$.

\begin{lem}
\label{lem:genSn}
Denote by $\sigma$ the $p$-cycle $\sigma = (0 \; 1 \; 2 \; \cdots \; p-1) \in \Sym(\f)$. For $0 \leq i \leq n-1$ define $\sigma_i \in \Sym(\f^n)$ as follows:
\[
\sigma_i(\lambda_0,\lambda_1,\ldots,\lambda_{n-1}) = \begin{cases} (\lambda_0,\ldots,\lambda_i,\ldots,\lambda_{n-1}) & \exists \, j < i \::\: \lambda_j \neq 0 \\
(\lambda_0,\ldots,\sigma(\lambda_i),\ldots,\lambda_{n-1}) & \forall \, j < i \::\: \lambda_j = 0
\end{cases} \, .
\]
Then $\langle \sigma_0,\ldots,\sigma_{n-1} \rangle$ is a copy of $P_n$ in $\Sym(\f^n)$. It acts transitively.
\end{lem}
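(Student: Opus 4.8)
The plan is to induct on $n$, exploiting $P_n\cong P_{n-1}\wr C_p$, which follows from the associativity of $\wr$ recalled above. The base case $n=1$ is immediate: $\sigma_0=\sigma$ is a $p$-cycle, so $\langle\sigma_0\rangle\cong C_p=P_1$ acts transitively on $\f$.

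For the inductive step, write a point of $\f^n$ as $(\lambda_0,\mu)$ with $\mu=(\lambda_1,\dots,\lambda_{n-1})\in\f^{n-1}$, and for $a\in\f$ put $X_a:=\{a\}\times\f^{n-1}$, so that $\f^n=X_0\sqcup\dots\sqcup X_{p-1}$. From the defining formula I would read off two facts: (i) $\sigma_0$ sends $(\lambda_0,\mu)$ to $(\sigma(\lambda_0),\mu)$, so it cyclically permutes the slices, $\sigma_0(X_a)=X_{a+1}$; and (ii) for $1\le i\le n-1$ the permutation $\sigma_i$ fixes $\lambda_0$, hence stabilises every $X_a$, acts as the identity on $X_a$ whenever $a\ne0$ (the condition $\exists\,j<i:\lambda_j\ne0$ then holds with $j=0$), and acts on $X_0$ exactly as the permutation $\sigma'_{i-1}$ of $\f^{n-1}$ given by the same recipe with $n$ replaced by $n-1$. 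Consequently $B_0:=\langle\sigma_1,\dots,\sigma_{n-1}\rangle$ is supported on $X_0$ and restricts there to $\langle\sigma'_0,\dots,\sigma'_{n-2}\rangle$, which by the inductive hypothesis is a copy of $P_{n-1}$ acting transitively on $X_0$; in particular $B_0\cong P_{n-1}$.

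Next I would set $B_a:=\sigma_0^aB_0\sigma_0^{-a}$, which by (i) is a copy of $P_{n-1}$ supported on $X_a$, with $\sigma_0$ permuting $B_0,\dots,B_{p-1}$ in a single $p$-cycle. Disjointness of supports gives that these subgroups commute and that $B:=\langle B_0,\dots,B_{p-1}\rangle=B_0\times\dots\times B_{p-1}$. Since $\sigma_0$ has order $p$, normalises $B$, and satisfies $\langle\sigma_0\rangle\cap B=1$ (any $\sigma_0^k$ stabilising $X_0$ has $p\mid k$), the group $\langle B,\sigma_0\rangle=B\rtimes\langle\sigma_0\rangle$ is $P_{n-1}\wr C_p\cong P_n$, because $B$ is a direct product of $p$ copies of $P_{n-1}$ with $C_p$ cyclically permuting the factors. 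Finally each $B_a=\sigma_0^aB_0\sigma_0^{-a}$ lies in $\langle\sigma_0,B_0\rangle$, so $\langle\sigma_0,\dots,\sigma_{n-1}\rangle=\langle\sigma_0,B_0\rangle=B\rtimes\langle\sigma_0\rangle$ is the copy of $P_n$ just described. Transitivity is then clear: $B_0$ already acts transitively on $X_0$, and $\langle\sigma_0\rangle$ carries $X_0$ onto every slice.

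The only step needing genuine care is the verification of (ii): that conditioning the $\sigma$-cycle on ``$\lambda_j=0$ for all $j<i$'' becomes, on the slice $\lambda_0=0$, the analogous condition over $j\in\{1,\dots,i-1\}$, and degenerates to the identity on the slices with $\lambda_0\ne0$. Everything else is routine bookkeeping with supports of permutations, together with the standard identification of $D^{\,p}\rtimes C_p$ (with $C_p$ permuting the $p$ factors) with $D\wr C_p$.
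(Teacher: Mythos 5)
Your proof is correct, but it runs the induction through the opposite bracketing of the iterated wreath product from the paper. The paper writes $P_n = C_p \wr P_{n-1}$ with the \emph{top} group $\langle\sigma_0,\ldots,\sigma_{n-2}\rangle \cong P_{n-1}$ known by induction, and its key step is a general fact: for $G \leq \Sym(Y)$, $S \leq \Sym(X)$ with $S$ transitive, the group $G \wr S = G^X \rtimes S$ is generated by $S$ together with a single coordinate copy $\delta_{x_0}(G)$ of $G$ (since $G^X$ is the normal closure of that copy); here $\sigma_{n-1} = \delta_{x_0}(\sigma)$ for $x_0 = (0,\ldots,0) \in \f^{n-1}$. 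You instead use $P_n \cong P_{n-1} \wr C_p$: by induction $\langle\sigma_1,\ldots,\sigma_{n-1}\rangle$ is a copy of $P_{n-1}$ supported on the slice $\lambda_0 = 0$, and you rebuild the base group $P_{n-1}^p$ by hand as the product of its $p$ disjointly supported $\sigma_0$-conjugates, then identify $\langle B,\sigma_0\rangle$ with $B \rtimes C_p \cong P_{n-1}\wr C_p$. Your reading of the defining formula (your points (i) and (ii)), the disjoint-support argument, the triviality of $\langle\sigma_0\rangle \cap B$, and the identification of a $p$-element cyclically permuting $p$ commuting isomorphic factors with the wreath product are all fine (the last uses $\sigma_0^p = 1$ to make the round-trip identifications consistent), and transitivity follows just as you say. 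What each approach buys: yours is more elementary and self-contained, needing only bookkeeping with supports; the paper's normal-closure lemma is slightly more abstract but is reused implicitly elsewhere (e.g.\ in Notation~\ref{notn:Weir}, where $A^{n-1}$ is identified as the normal closure of $\langle\sigma_{n-1}\rangle$, and in Remark~\ref{rem:lambda}), so the paper gets additional mileage out of its formulation.
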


\begin{proof}
More generally, for $G \leq \Sym(Y)$ and $S \leq \Sym(X)$, the group $G \wr S = G^X \rtimes S$ is the following subgroup of $\Sym(X \times Y)$: The action of $\pi \in S$ on $X \times Y$ is $(x_0,y) \mapsto (\pi(x_0),y)$, and the action of $(g_x)_{x \in X} \in G^X$ is $ (x_0,y) \mapsto (x_0,g_{x_0}(y))$. This is indeed an action of $G \wr S$, since
\begin{align*}
\pi (g_x)_{x \in X} (x_0,y) & = \pi (x_0,g_{x_0}(y)) = (\pi(x_0),g_{x_0}(y)) \\ & = (g_{\pi^{-1}(x)})_{x \in X} (\pi(x_0),y) = (g_{\pi^{-1}(x)})_{x \in X} \pi(x_0,y) \, .
\end{align*}
For $g \in G$ and $x \in X$ define $\delta_x(g) \in G^X$ by $(\delta_x g)_{x'} = \begin{cases} g & x = x' \\ \Id & \text{otherwise} \end{cases}$. Then ${}^{\pi}\delta_x (g) = \delta_{\pi(x)} (g)$. So as $(g_x)_{x \in X} = \prod_{x \in X} \delta_x(g_x)$, we see: If $S$ is transitive and $x_0 \in X$ then $G^X$ is the normal closure of $\Bild(\delta_{x_0})$, and $G \wr S$ is generated by $S$ and $\Bild(\delta_{x_0})$. We apply this to $P_n = C_p \wr P_{n-1}$ and use induction over $n$. Note in particular that $\sigma_{n-1}$ is $\delta_{x_0}(\sigma)$ for $x_0 = (0,\ldots,0) \in \f^{n-1}$.

Transitive: More generally, if $G$~and $S$ are transitive, then so is $G \wr S$.
\end{proof}

\begin{ex}
\label{ex:S27-1}
Consider $P_3 = \langle \sigma_0,\sigma_1,\sigma_2 \rangle$ for $p=3$. Then  for example $15 = 1 \cdot 9 + 2 \cdot 3 +  0 \cdot 1 \in \{0,1,\ldots,26\}$ corresponds to $(1,2,0) \in \f[3]^3$. Hence
\begin{align*}
\sigma_0 & = (0\;9\;18)(1\;10\;19)(2\;11\;20)(3\;12\;21)(4\;13\;22)(5\;14\;23)(6\;15\;24) \cdot {} \\ & \hspace*{20pt} (7\;16\;25)(8\;17\;26) \\
\sigma_1 & = (0\;3\;6)(1\;4\;7)(2\;5\;8) \\
\sigma_2 & = (0 \; 1 \; 2) \, .
\end{align*}
\end{ex}

\begin{lem}
\label{lem:Pn-conjs}
All $p^{n-1}$ conjugates of $\sigma_{n-1}$ in~$P_n$ commute with each other.
\end{lem}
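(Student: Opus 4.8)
The plan is to identify the $p^{n-1}$ conjugates of $\sigma_{n-1}$ explicitly inside the base group of the wreath product decomposition $P_n = C_p \wr P_{n-1}$, and then observe that they lie in pairwise disjoint direct factors. Recall from the proof of Lemma~\ref{lem:genSn} that $\sigma_{n-1} = \delta_{x_0}(\sigma)$, where $x_0 = (0,\ldots,0) \in \f^{n-1}$ and $\delta_x(g) \in C_p^{\f^{n-1}}$ is the element supported only in coordinate~$x$. The base group $C_p^{\f^{n-1}} \trianglelefteq P_n$ is normal, so every conjugate of $\sigma_{n-1}$ by an element of $P_n$ again lies in $C_p^{\f^{n-1}}$. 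Since $P_{n-1}$ acts transitively on $\f^{n-1}$ (Lemma~\ref{lem:genSn}) and $C_p^{\f^{n-1}}$ is abelian, conjugation by $g \in C_p^{\f^{n-1}}$ fixes $\sigma_{n-1}$ while conjugation by $\pi \in P_{n-1}$ sends $\delta_{x_0}(\sigma)$ to $\delta_{\pi(x_0)}(\sigma)$, using the relation ${}^{\pi}\delta_x(g) = \delta_{\pi(x)}(g)$ already recorded in the proof of Lemma~\ref{lem:genSn}. Hence the full $P_n$-conjugacy class of $\sigma_{n-1}$ is exactly $\{\delta_x(\sigma) \mid x \in \f^{n-1}\}$, a set of size $p^{n-1}$ as claimed.

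It then remains to note that these elements commute pairwise. But $C_p^{\f^{n-1}} = \prod_{x \in \f^{n-1}} \delta_x(C_p)$ is a direct product of the $p^{n-1}$ subgroups $\delta_x(C_p)$, and $\delta_x(\sigma)$ lies in the $x$-th factor. Elements of distinct direct factors commute, and $\delta_x(\sigma)$ trivially commutes with itself, so any two of the conjugates commute. This completes the argument.

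I do not expect any serious obstacle here: the only mild subtlety is making sure the conjugacy class really has full size $p^{n-1}$ rather than fewer elements, i.e.\ that the various $\delta_x(\sigma)$ are genuinely distinct, which is immediate since they have different supports. Everything else is a direct unwinding of the wreath-product bookkeeping set up in the proof of Lemma~\ref{lem:genSn}, so the proof should be just a few lines.
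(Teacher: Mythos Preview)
Your proof is correct and follows the same approach as the paper: both use the wreath product decomposition $P_n \cong C_p \wr P_{n-1}$ to observe that $\sigma_{n-1}$ lies in the abelian normal base group $C_p^{p^{n-1}}$, so all its conjugates lie there and commute. The paper's proof is simply a one-line pointer to this structure, whereas you spell out the identification of the conjugates as the $\delta_x(\sigma)$ and the count $p^{n-1}$ explicitly.
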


\begin{proof}
$P_{n-1}$ has degree $p^{n-1}$, and in the isomorphism $P_n \cong C_p \wr P_{n-1}$ the $P_{n-1}$ is generated by $\sigma_0,\ldots,\sigma_{n-2}$, and the $C_p$ by $\sigma_{n-1}$.
\end{proof}

\begin{rem}
\label{rem:lambda}
For $x \in P_n$, observe that ${}^x \sigma_{n-1} \in P_n$ moves $(\lambda_0,\ldots,\lambda_{n-1}) \in \f^n$ if and only if $x$ sends $(0,\ldots,0) \in \f^n$ to $(\lambda_0,\ldots,\lambda_{n-2},\mu)$ for some $\mu \in \f$. So ${}^x \sigma_{n-1}$ is a $p$-cycle on those $p$ points whose first $n-1$ coordinates coincide with those of $x(0,\ldots,0)$. One such value of $x$ is $x = \sigma_0^{\lambda_0} \sigma_1^{\lambda_1} \cdots \sigma_{n-2}^{\lambda_{n-2}}$; this lies in $P_{n-1}$, viewed as a subgroup of $P_n$ via the isomorphism $P_n \cong C_p \wr P_{n-1}$.
\end{rem}

\begin{notn}
\label{notn:Weir}
Following Weir we write $A^{n-1}$ for the base group $C_p^{p^{n-1}}$ of $P_n = C_p \wr P_{n-1} = C_p^{p^{n-1}} \rtimes P_{n-1}$. Then $A^{n-1}$ is elementary abelian, and normal in $P_n$. Also, $A^{n-1}$ is the normal closure of $\langle \sigma_{n-1} \rangle$ in $P_n$.
\end{notn}

\begin{rem}
\label{rem:TnotChar}
For odd~$p$, Weir~\cite[Thm~6]{Weir} shows that $A^{n-1}$ is the unique maximal abelian normal subgroup of~$P_n$, and hence characteristic. But if $p=2$ then $A^{n-1}$ is not characteristic: for example, $P_2 \cong D_8$ and $A^1$ is one of the two rank two elementary abelians in~$D_8$; but these two elementary abelians are conjugate in $D_{16}$.
\end{rem}

\begin{ex}
\label{ex:S27-2}
For $p=3$, $A^2 \leq P_3$ is elementary abelian of rank $9$ with basis
\begin{xalignat*}{3}
\sigma_2 & = (0 \; 1 \; 2) &
{}^{\sigma_1}\sigma_2 & = (3 \; 4 \; 5) &
{}^{\sigma_1^2}\sigma_2 & = (6 \; 7 \; 8) \\
{}^{\sigma_0} \sigma_2 & = (9 \; 10 \; 11) &
{}^{\sigma_0 \sigma_1}\sigma_2 & = (12 \; 13 \; 14) &
{}^{\sigma_0 \sigma_1^2}\sigma_2 & = (15 \; 16 \; 17) \\
{}^{\sigma_0^2} \sigma_2 & = (18 \; 19 \; 20) &
{}^{\sigma_0^2 \sigma_1}\sigma_2 & = (21 \; 22 \; 23) &
{}^{\sigma_0^2 \sigma_1^2}\sigma_2 & = (24 \; 25 \; 26) \, .
\end{xalignat*}
\end{ex}

\begin{cor}
\label{cor:selfCentralizing}
Both $A^{n-1}$ and $P_n$ are self-centralizing\footnote{We say that $H$ is self-centralizing in $G$ if $C_G(H) \leq H$.} in~$S_{p^n}$.
\end{cor}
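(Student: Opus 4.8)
The plan is to deduce both statements from the transitivity of the $P_n$-action established in Lemma~\ref{lem:genSn}, together with the fact that $A^{n-1}$ contains the $p$-cycle $\sigma_{n-1}$ and its conjugates. First I would handle $A^{n-1}$. Let $g \in S_{p^n} = \Sym(\f^n)$ centralise $A^{n-1}$. Recall from Notation~\ref{notn:Weir} that $A^{n-1}$ is the base group $C_p^{p^{n-1}}$; the conjugates of $\sigma_{n-1}$ form a set of commuting $p$-cycles, one supported on each fibre of the projection $\f^n \to \f^{n-1}$ onto the first $n-1$ coordinates (Remark~\ref{rem:lambda} and Lemma~\ref{lem:Pn-conjs}). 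These $p^{n-1}$ supports partition $\f^n$ into blocks of size $p$, and this partition is exactly the orbit decomposition of $\langle A^{n-1}\rangle$ acting on $\f^n$. Since $g$ centralises $A^{n-1}$, it permutes the orbits, and on each orbit it commutes with the corresponding $p$-cycle; a permutation commuting with a $p$-cycle on a set of $p$ points must be a power of that cycle. Hence $g$ acts on each block as a power of the relevant conjugate of $\sigma_{n-1}$, so $g \in A^{n-1}$. This shows $C_{S_{p^n}}(A^{n-1}) \le A^{n-1}$.

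For $P_n$ itself, observe that $C_{S_{p^n}}(P_n) \le C_{S_{p^n}}(A^{n-1}) \le A^{n-1}$ since $A^{n-1} \le P_n$. So it suffices to show that an element $a \in A^{n-1}$ centralising all of $P_n$ lies in $Z(P_n)$, which is automatic, but more to the point we just need $C_{S_{p^n}}(P_n)\le P_n$, and $A^{n-1}\le P_n$ already gives that. Thus $C_{S_{p^n}}(P_n) \le A^{n-1} \le P_n$, as required. (One could even note that combined with Proposition~\ref{prop:AutPn}\,\enref{AutPn-1} this pins down $C_{S_{p^n}}(P_n) = Z(P_n) \le A^{n-1}$, but that is not needed here.)

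I do not anticipate a serious obstacle; the one point requiring a little care is the claim that the supports of the conjugates of $\sigma_{n-1}$ really do give the full orbit partition of $\langle A^{n-1}\rangle$ on $\f^n$, i.e. that $\langle A^{n-1}\rangle$ acts transitively on each such block of size $p$ and has no larger orbits. This follows because $A^{n-1}$ is generated by these commuting $p$-cycles with pairwise disjoint supports, so its orbits are precisely the individual supports together with the fixed points — but here there are no fixed points, as the supports already exhaust $\f^n$ (there are $p^{n-1}$ of them, each of size $p$). With that in hand the elementary fact about centralisers of a single $p$-cycle on $p$ points closes the argument.
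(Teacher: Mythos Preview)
Your proposal is correct and follows essentially the same route as the paper: both arguments use that $A^{n-1}$ is generated by $p$-cycles with pairwise disjoint supports covering $\f^n$, observe that any $\pi$ centralising such a $p$-cycle must preserve its support and act there as a power of it, and then deduce the $P_n$ case from $A^{n-1}\le P_n$. The only cosmetic difference is that the paper phrases the key step as ``$C_p$ is self-centralizing in $S_p$'' and writes $\pi=\pi'\cdot\sigma^r$ with disjoint supports, whereas you speak of $g$ preserving each block and restricting to a power of the cycle there; these are the same observation.
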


\begin{proof}
By Remark~\ref{rem:lambda}, $A^{n-1}$ is generated by a set $X$ of $p$-cycles whose supports are disjoint and cover $\f^n$. Suppose that $\pi \in S_{p^n}$ centralizes~$A^{n-1}$, and pick $\sigma \in X$; then $[\pi,\sigma]=1$. Since $\sigma$ is a $p$-cycle and $C_p$ is self-centralizing in $S_p$, it follows that $\pi$ has the form $\pi = \pi' \cdot \sigma^r$, where the suports of $\pi'$ and $\sigma$ are disjoint. As the supports of the $\sigma \in X$ cover $\f^n$, it follows that $\pi \in A^{n-1}$. 
So $A^{n-1}$ is self-centralizing in $S_{p^n}$, and the result for $P_n$ follows.
\end{proof}

\section{Weir's filtration \texorpdfstring{$T_j$}{T\_j} and Proposition~\ref{prop:j1}}
\label{sec:Tj}

\begin{notn}
Associativity implies that $P_n \cong P_{n-j} \wr P_j$ for all $0 \leq j \leq n$. Weir~\cite{Weir} writes $T_j$ for the base group of this wreath product, so $T_j \cong P_{n-j}^{p^j}$.
\end{notn}

\noindent
Hence $P_n = P_j T_j$, $T_{n-1} = A^{n-1}$ and $P_n = T_0 \geq T_1 \geq \cdots \geq T_{n-1} \geq T_n = 1$. Also, $T_{j-1}/T_j$ is the subgroup $A^{j-1}$ of $P_j \cong P_n / T_j$. For odd~$p$ this means that each $T_j$ is characteristic in $P_n$, as $A^{n-1}$ is characteristic.

\begin{ex}
\label{ex:S27-4}
If $p^n = 3^3$ then $T_0 = P_3$; $T_3 = 1$; $T_2 = A^2$, which we described in Example~\ref{ex:S27-3}; and $T_1/T_2$ is elementary abelian of rank~$3$, generated by the cosets of
the three $\langle \sigma_0 \rangle$-conjugates of $\sigma_1$\@.
\end{ex}

\begin{notn}
Weir defines the \emph{depth} $j$ of a subgroup $S \leq P_n$ to be the largest $i$ such that $S \leq T_i$. That is, $T_j$ is the smallest group in the series $P_n = T_0 > T_1 > \cdots > T_n = 1$ which contains~$S$.
\end{notn}

\begin{lem}
\label{lem:jx}
Let $N \trianglelefteq P_n$.  If $N \cap T_{j+1} \lneq N \cap T_j$ then $N \cap T_{k+1} \lneq N \cap T_k$ for all $j \leq k \leq n-1$.
\end{lem}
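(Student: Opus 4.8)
The plan is to reduce to a single inductive step: it suffices to show that if $N \cap T_{k+1} \lneq N \cap T_k$ for some $k$ with $0 \le k \le n-2$, then $N \cap T_{k+2} \lneq N \cap T_{k+1}$; the conclusion then follows by an immediate induction starting at $k=j$. So suppose for contradiction that $N \cap T_{k+2} = N \cap T_{k+1}$, i.e. $N \cap T_{k+1} \le T_{k+2}$, while there exists $x \in (N \cap T_k) \setminus T_{k+1}$.

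The key observation is that $T_k / T_{k+2}$, viewed inside $P_n / T_{k+2} \cong P_{k+2}$, is the base group $T_k$ of the wreath decomposition $P_{k+2} \cong P_2 \wr P_k$, hence is isomorphic to $P_2^{p^k} = D_8^{p^k}$ (for $p=2$) or more generally $P_2^{p^k}$, and in particular $T_{k+1}/T_{k+2}$ is the Frattini-type piece: $T_{k+1}/T_{k+2} \cong A^k$ sits inside $P_2^{p^k}$ as $(C_p^{\,p})^{p^k}$... let me instead argue more structurally. Reducing modulo $T_{k+2}$, write $\bar P = P_n/T_{k+2}$, $\bar N = NT_{k+2}/T_{k+2}$, and note $\bar T_k, \bar T_{k+1}$ are normal in $\bar P$ with $\bar T_{k+1} = [\bar P, \bar T_k]$ — indeed $\bar T_k/\bar T_{k+1} \cong A^k$ is the base group of $P_{k+1}\cong P_1 \wr P_k$ and the $P_k$-action on it is uniserial by Proposition~\ref{prop:t2}\enref{t2-2}, so in particular $\bar T_{k+1}$ is exactly the image of $[\bar T_k, \bar P]$ together with the relevant $p$-th powers. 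Now $\bar x \in \bar N \cap \bar T_k$ has nontrivial image in $\bar T_k / \bar T_{k+1}$. The hypothesis $N \cap T_{k+1} \le T_{k+2}$ says $\bar N \cap \bar T_{k+1} = 1$.

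The contradiction will come from pushing $\bar x$ down one more level: since $\bar N \trianglelefteq \bar P$, we have $[\bar x, \bar P] \le \bar N$, and also $[\bar x, \bar P] \le [\bar T_k, \bar P] \le \bar T_{k+1}$, so $[\bar x, \bar P] \le \bar N \cap \bar T_{k+1} = 1$; thus $\bar x$ is central in $\bar P$. But the center of $\bar P = P_n/T_{k+2} \cong P_{k+2}$ lies in the bottom term $\bar A^{k+1} = T_{k+1}/T_{k+2}$ of the lower central series (the center of an iterated wreath product $C_p \wr \cdots \wr C_p$ is contained in the last base group), contradicting $\bar x \notin \bar T_{k+1}$. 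I would phrase the needed fact about $Z(P_m)\le A^{m-1}$ cleanly — it follows because $P_m \cong C_p \wr P_{m-1}$ acts faithfully and transitively on $p$ blocks in its action on the top factor, so a central element must act trivially on the block structure and hence lie in the base group $A^{m-1}$, and one iterates; alternatively it is immediate from Proposition~\ref{prop:AutPn}\enref{AutPn-1} that $Z(P_n)\le A^{n-1}$, though here I want it for the quotient, so the short direct argument is cleaner. The main obstacle is getting the identification of $\bar T_{k+1}$ with $[\bar T_k, \bar P]$ (up to $p$-th powers, which the uniserial action absorbs) stated precisely enough that the commutator computation $[\bar x,\bar P]\le\bar T_{k+1}$ is unambiguous; once that is in place the rest is a two-line argument.
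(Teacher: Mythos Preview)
Your central claim that $[\bar T_k,\bar P]\le\bar T_{k+1}$ (equivalently, that $T_k/T_{k+1}$ is central in $P_n/T_{k+1}\cong P_{k+1}$) is false for $k\ge 1$, and this is not a matter of phrasing. In $P_{k+1}=C_p\wr P_k$ the subgroup $T_k/T_{k+1}\cong A^k=C_p^{p^k}$ is the base group, on which $P_k$ acts nontrivially by permuting coordinates; it is not central. Concretely, $[\sigma_0,\sigma_k]\in T_k\setminus T_{k+1}$ for every $k\ge 1$, so $[\bar P,\bar T_k]\not\le\bar T_{k+1}$. The uniserial statement you invoke concerns the action of $P_k$ on $A^k$; it says nothing about $A^k$ being central in $P_{k+1}$. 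Hence the chain $[\bar x,\bar P]\le[\bar T_k,\bar P]\le\bar T_{k+1}$ breaks at the second inclusion, and the conclusion that $\bar x$ is central in $\bar P$ cannot be reached.

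The idea is easily repaired: commute $\bar x$ with $\bar T_{k+1}$ rather than with all of $\bar P$. Since $\bar T_{k+1}\trianglelefteq\bar P$ and $\bar x\in\bar N\trianglelefteq\bar P$, one has $[\bar x,\bar T_{k+1}]\le\bar N\cap\bar T_{k+1}$; and since $A^{k+1}$ is self-centralizing in $P_{k+2}$ (the complementary $P_{k+1}$ permutes its coordinates faithfully), $\bar x\notin\bar T_{k+1}$ forces $[\bar x,\bar T_{k+1}]\ne 1$, giving the contradiction. This is essentially the paper's argument, which is carried out even more concretely and without contradiction: after conjugating $g\in N\cap(T_k\setminus T_{k+1})$ so that $\sigma_k$ lies in the support of $gT_{k+1}\in A^k$, one observes directly that $[g,\sigma_{k+1}]\in N\cap(T_{k+1}\setminus T_{k+2})$, because $\sigma_{k+1}$ commutes with every $P_k$-conjugate of $\sigma_k$ other than $\sigma_k$ itself.
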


\begin{proof}
If $g \in N  \cap (T_j \setminus T_{j+1})$ then $gT_{j+1} \neq 1$ in the elementary abelian subgroup $A^j$ of $P_n/T_{j+1} \cong P_{j+1}$. Replacing $g$ by a conjugate, we may assume that $\sigma_j$ lies in the support\footnote{$A^j$ is an $\f$-vector space, with basis the $P_j$-conjugates of $\sigma_j$.} of $gT_{j+1}$.
Then $[g,\sigma_{j+1}] \in N \cap (T_{j+1} \setminus T_{j+2})$, since $\sigma_{j+1}$ commutes with all notrivial $P_j$-conjugates of~$\sigma_j$.
\end{proof}

\begin{proof}[Proof of Proposition~\ref{prop:j1}]
$T_n$~and $T_{n-1}$ are abelian. If $j < n-1$, then $N \cap T_{j+1}$ has depth $j+1$ by Lemma~\ref{lem:jx}\@.
So by downward induction on $j$ we may assume that $[T_{j+1},T_{j+1}] \leq N$.

$T_j$ is generated by $T_{j+1}$ and the $P_j$-conjugates of $\sigma_j$. So by the formulae\footnote{See e.g.\@ \cite[Lemma 2.2.4, p.~20]{Gor}\@.} for the commutators $[x,yz]$ and $[xy,z]$ it suffices to show that $[x,y] \in N$ if each of $x,y$ is either an element of $T_{j+1}$ or a $P_j$-conjugate of $\sigma_j$. As these conjugates commute with each other, we need only consider the case  of $[\sigma_j,y]$, with $y \in T_{j+1}$. 

If $[\sigma_j,y]=1$ then we are done, hence we may assume that $\sigma_j,y$ lie in the same factor $F \cong P_{n - j}$ of the base group of $P_{n-j} \wr P_j$. As in the proof of Lemma~\ref{lem:jx} there is some $g \in N$ such that $\sigma_j$ occurs in the support of $gT_{j+1} \in A^j$. That is, some power $g^r$ has component $\sigma_j z$ in $F$, with $z \in T_{j+1}$. Hence $[\sigma_j,y] = [g^r z^{-1},y]$. Using the commutator formulae again we have $[\sigma_j,y] \in N$.
\end{proof}

\begin{cor}
\label{cor:t1}
\emph{(see \cite[Thm 4.4.1]{Cov})}
Let $p$ be an arbitrary prime. If $B \trianglelefteq P_n$ is an abelian normal subgroup, then $B \leq T_{n-2}$.
\end{cor}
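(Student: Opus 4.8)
The plan is to argue by contradiction, the only substantial input being Proposition~\ref{prop:j1}. For $n \le 2$ there is nothing to prove, since then $T_{n-2} = T_0 = P_n$; so assume $n \ge 3$. Suppose $B \trianglelefteq P_n$ is abelian but $B \not\le T_{n-2}$. Then the depth $j$ of $B$ satisfies $j \le n-3$, so Proposition~\ref{prop:j1} gives $[T_j,T_j] \le B$; since $B$ is abelian, $[T_j,T_j]$ must be abelian. As the chain $P_n = T_0 \ge T_1 \ge \dots \ge T_n = 1$ is decreasing and $j \le n-3$, we have $T_{n-3} \le T_j$, and hence $[T_{n-3},T_{n-3}] \le [T_j,T_j]$ is abelian as well. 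But $T_{n-3} \cong P_3^{\,p^{n-3}}$, so $[T_{n-3},T_{n-3}] \cong [P_3,P_3]^{p^{n-3}}$, which is abelian if and only if $[P_3,P_3]$ is. Thus the whole corollary reduces to the single assertion that $[P_3,P_3]$ is \emph{not} abelian.

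To check that last point I would use associativity of the wreath product to write $P_3 \cong P_2 \wr C_p = P_2^{\,p} \rtimes \langle c\rangle$, with $c$ of order $p$ permuting the $p$ coordinates of the base group cyclically. Since $P_3/P_2^{\,p}$ is abelian, $[P_3,P_3] \le P_2^{\,p}$. A short direct computation shows that for each $w \in P_2$ the element $g_w \in P_2^{\,p}$ equal to $w$ and $w^{-1}$ in two fixed, cyclically adjacent coordinates and to $1$ elsewhere is a commutator $[a,c]$ for a suitable $a \in P_2^{\,p}$, hence lies in $[P_3,P_3]$. Now $P_2 \cong C_p \wr C_p$ is nonabelian, so we may choose $w,v \in P_2$ with $[w,v] \ne 1$; then $g_w$ and its cyclic shift $c\,g_v\,c^{-1}$ both lie in $[P_3,P_3]$, but they do not commute, since in a coordinate where both are nontrivial their entries are $w^{\pm1}$ and $v^{\pm1}$. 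Hence $[P_3,P_3]$ is nonabelian, contradicting the previous paragraph, and the corollary follows.

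I expect essentially no obstacle: once Proposition~\ref{prop:j1} is available the argument is short. The one mildly fiddly point is the bookkeeping inside $P_2 \wr C_p$ needed to exhibit enough explicit elements of $[P_3,P_3]$ — and it is worth stressing that one does \emph{not} need to determine $[P_3,P_3]$ completely, only to produce two non-commuting commutators. One could alternatively phrase the reduction as ``$P_m$ is non-metabelian for every $m \ge 3$'' and use $P_3 \le P_m$ to reduce to $m=3$, but routing through $T_{n-3}$ keeps the wreath-product reindexing to a minimum; this recovers Covello's result \cite[Thm 4.4.1]{Cov}.
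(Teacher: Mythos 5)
Your argument is correct and is essentially the paper's proof: both deduce from Proposition~\ref{prop:j1} that $T'_{n-3} \leq B$, note $T_{n-3}$ is a direct product of copies of $P_3$, and derive a contradiction from $P'_3$ being nonabelian. The only difference is the final verification, where the paper simply exhibits $[[\sigma_0,\sigma_1],[\sigma_0,\sigma_2]] \neq 1$ while you produce two non-commuting commutators structurally inside $P_2 \wr C_p$; both checks are fine.
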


\begin{proof}
If not, then $T'_{n-3} \leq B$ by Proposition~\ref{prop:j1}\@. But $T_{n-3}$ is a direct product of copies of $P_3$, and $P'_3$ is nonabelian as $[[\sigma_0,\sigma_1],[\sigma_0,\sigma_2]] \neq 1$.
\end{proof}

\section{Uniserial action and Proposition~\ref{prop:t2}}

\begin{defn}
Let $P,M$ be finite $p$-groups, with $M$ abelian and $P$ acting on~$M$. Recall from~\cite[\S4.1]{LeedhamGreenMcKay:book} that the action is called \emph{uniserial} if the following equivalent conditions hold:
\begin{enumerate}
\item
$[P,N]$ has index $p$ in $N$ for every $P$-invariant subgroup $1 \neq N \leq M$.
\item $M_{\ell} \neq 0$, where $\ell = \log_p (\abs{M})$, $M_1 = M$ and $M_{r+1} = [P,M_r]$.
\end{enumerate}
Recall further that if the action is uniserial, then
\begin{enumerate}
\item $M = M_1 > M_2 > \cdots > M_{\ell} > M_{\ell+1} = 0$.
\item $N \leq M$ is $P$-invariant if and only if $N$ is one of the $M_r$.
\item The set of $P$-invariant subgroups of~$M$ is linearly ordered by inclusion.
\end{enumerate}
One calls $\ell$ the \emph{length} of~$M$.
\end{defn}

\begin{rem}
\label{rem:uniserial-1}
It follows that $C_M(P) = M_{\ell}$.
\end{rem}

\begin{lem}
\label{lem:preUni}
Let $P,M$ be $2$-groups, with $P$ acting on~$M$. Then the natural action of $Q = P \wr C_2$ on $M^2 = M \oplus M$ has the following properties:
\begin{enumerate}
\item \label{enum:preUni-1}
If $[P,M]$ has index $2$ in $M$, then $M^2 > [Q,M^2] > [Q,Q,M^2] = [P,M]^2$.
\item \label{enum:preUni-2}
If the action of $P$~on $M$ is uniserial, then so is the action of $Q$~on $M^2$.
\end{enumerate}
\end{lem}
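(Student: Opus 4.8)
The plan is to write $M$ additively and to realise $Q = P\wr C_2$ as $P^2\rtimes\langle t\rangle$, where $t$ interchanges the two summands of $M^2 = M\oplus M$; then every computation reduces to evaluating $q\cdot m - m$ for $q$ in the generating set $P^2\cup\{t\}$ of~$Q$ and $m\in M^2$. Since $M^2$ is abelian and $Q$-invariant, $[Q,M^2]$ is generated by these elements. For $q\in P^2$ one gets $(g_1m_1-m_1,\,g_2m_2-m_2)$, so these span $[P,M]^2:=[P,M]\oplus[P,M]$; and $t\cdot(m_1,m_2)-(m_1,m_2) = (m_2-m_1,\,m_1-m_2)$ runs over the anti-diagonal $\Delta:=\{(x,-x):x\in M\}$. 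Hence $[Q,M^2] = [P,M]^2+\Delta$. Using $[M:[P,M]]=2$ one has $[M^2:[P,M]^2]=4$ and $[\Delta:\Delta\cap[P,M]^2]=2$, whence $[M^2:[Q,M^2]]=2$; in particular $M^2>[Q,M^2]$.

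Next I would show $[Q,Q,M^2]=[P,M]^2$. For ``$\subseteq$'': as $[Q,M^2]=[P,M]^2+\Delta$ is $Q$-invariant and abelian, it is enough to check $q\cdot w - w\in[P,M]^2$ for $q\in P^2\cup\{t\}$ and $w$ a generator of $[P,M]^2$ or of~$\Delta$. A quick check shows all cases are immediate except $t$, or a mixed element $(g_1,g_2)t$, acting on a generator $(y,-y)$ of~$\Delta$: these produce elements whose components have the form $\pm 2y$ or $\pm(g_iy+y)$, and all of these lie in $[P,M]$ because $M/[P,M]$ has order~$2$, which forces $2M\subseteq[P,M]$. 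For ``$\supseteq$'': given $x=g\cdot m - m\in[P,M]$, take $w=(m,-m)\in\Delta\subseteq[Q,M^2]$ and apply $q=(g,1)$, respectively $q=(1,g)$, to obtain $(x,0)$, respectively $(0,-x)$, inside $[Q,[Q,M^2]]$; these generate $[P,M]^2$. Finally $\Delta\not\subseteq[P,M]^2$ (since $\Delta/(\Delta\cap[P,M]^2)\cong M/[P,M]\neq0$), so $[Q,M^2]\neq[P,M]^2$ and the chain $M^2>[Q,M^2]>[Q,Q,M^2]=[P,M]^2$ is strict, proving~\enref{preUni-1}.

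For~\enref{preUni-2} I would use characterisation~(2) of uniseriality. Let $\ell=\log_2\abs{M}$ and $M=M_1>M_2>\cdots>M_\ell>M_{\ell+1}=0$ be the unique $P$-invariant series, each step of index~$2$; I must show $(M^2)_{2\ell}\neq0$, where $(M^2)_1=M^2$ and $(M^2)_{r+1}=[Q,(M^2)_r]$. The point is that for each $1\le k\le\ell$ the $P$-submodule $M_k$ satisfies the hypothesis of~\enref{preUni-1} (namely $[P,M_k]=M_{k+1}$ has index~$2$ in~$M_k$), and $(M_k)^2$ is a $Q$-invariant subgroup of $M^2$ on which $Q$ acts as $P\wr C_2$ on $(M_k)^2$; so~\enref{preUni-1} gives
\[
(M_k)^2 \;>\; [Q,(M_k)^2] \;>\; [Q,Q,(M_k)^2] \;=\; (M_{k+1})^2 .
\]
Starting from $(M^2)_1=(M_1)^2$ and iterating, a straightforward induction yields $(M^2)_{2k-1}=(M_k)^2$ and $(M^2)_{2k}=[Q,(M_k)^2]$ for $1\le k\le\ell$, with all these containments strict. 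In particular $(M^2)_{2\ell}=[Q,(M_\ell)^2]\neq0$ by~\enref{preUni-1} applied to $M_\ell$ (where $[P,M_\ell]=0$). Since $\log_2\abs{M^2}=2\ell$, this is exactly condition~(2), so $Q$ acts uniserially on~$M^2$.

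The one genuinely computational point is the inclusion $[Q,Q,M^2]\subseteq[P,M]^2$ in~\enref{preUni-1}, and everything there hinges on $2M\subseteq[P,M]$, which is what pushes the ``$t$ on the anti-diagonal'' contributions back into $[P,M]^2$; with~\enref{preUni-1} established, \enref{preUni-2} is a formal induction whose only delicacy is checking that the hypothesis of~\enref{preUni-1} survives passage to each~$M_k$ --- which it does precisely because the $P$-invariant series of~$M$ has all steps of index~$2$.
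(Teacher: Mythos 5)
Your argument is correct and is essentially the paper's: your observation that applying $(g,1)$ to an anti-diagonal element $(m,-m)$ produces $(gm-m,0)\in[Q,Q,M^2]$ is exactly the paper's commutator identity $[(g,1),\sigma,(1,x)]=([g,x],1)$ written additively, and your two-step induction for part (2) is the same as the paper's. The only (harmless) difference is in part (1): the paper does not compute $[Q,M^2]=[P,M]^2+\Delta$ or prove the reverse inclusion $[Q,Q,M^2]\subseteq[P,M]^2$ explicitly, since nilpotence of the action together with the fact that $[P,M]^2$ has index $4$ in $M^2$ already forces the strict chain and the equality once $[P,M]^2\leq[Q,Q,M^2]$ is known, whereas you verify everything directly (using $2M\subseteq[P,M]$), which is slightly longer but equally valid.
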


\begin{proof}
Here we write $[a,b] = aba^{-1}b^{-1}$ and $[a,b,c] = [a,[b,c]]$.
\item \enref{preUni-1}:
Since the action of $Q$ on $M^2$ is nilpotent and $[P,M]^2$ has index~$4$ in~$M^2$, it suffices to show that $[P,M]^2 \leq [Q,Q,M^2]$. We have $Q = P^2 \rtimes \langle \sigma \rangle$, where $\sigma$ transposes the two copies of $P^2$. Then for $g \in P$ and $x \in M$ we have
\[
[(g,1),\sigma,(1,x)] = [(g,1),(x,x^{-1})] = ([g,x],1) \,
\]
Hence $[P,M] \times 1$ lies in $[Q,Q,M^2]$. Similarly, $1 \times [P,M] \leq [Q,Q,M^2]$.
\item \enref{preUni-2}:
Set $\ell = \log_2 \abs{M}$.
Define $M_r,N_r$ by $M_1 = M$, $N_1 = M^2$, $M_{r+1} = [P,M_r]$ and $N_{r+1} = [Q,N_r]$. Then $M_{\ell} \neq 0$, and we need $N_{2\ell} \neq 0$. As $M$ is uniserial, $M_{r+1} = [P_{n-1},M_r]$ has index~$2$ in $M_r$ for $r \leq \ell$. So by induction on $r$ we have $N_{2r-1} = M_r^2$ for $r \leq \ell+1$, since if $r \leq \ell$ and $N_{2r-1} = M_r^2$ then
\[
N_{2r+1} = [Q,Q,M_r^2] = [P,M_r]^2 = M_{r+1}^2
\]
by~\enref{preUni-1}\@.
In particular $N_{2\ell-1} = M_{\ell}^2 \neq 0$. Another application of~\enref{preUni-1} shows that $N_{2\ell} = [Q,M_{\ell}^2] > [Q,Q,M_{\ell}^2]$, hence $N_{2\ell} \neq 0$. So $M^2$ is uniserial.
\end{proof}

\begin{lem}[Weir]
\label{lem:ux1}
The action of $P_n$ on $A^n$ is uniserial of length~$p^n$.
\end{lem}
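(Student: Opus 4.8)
The plan is to induct on $n$, treating the two primes separately at the inductive step, since the passage $P_n \cong C_p \wr P_{n-1}$ acting on $A^n$ is exactly the ``$M \mapsto M^p$'' construction and Lemma~\ref{lem:preUni} has already done the $p=2$ bookkeeping. For $n=0$ the group $P_0$ is trivial and $A^0 \cong C_p$ has length $1$, which is the base case (or one can start at $n=1$, where $P_1 = C_p$ acts on $A^1 = C_p^p$ by cyclically permuting the coordinates, and the Jordan form of a single $p$-cycle over $\f$ is a single Jordan block of size $p$, so the action is uniserial of length $p$).

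For the inductive step, suppose the action of $P_{n-1}$ on $A^{n-1}$ is uniserial of length $p^{n-1}$. First I would identify $A^n$, as a module for $P_n \cong C_p \wr P_{n-1}$, with $p$ copies of $A^{n-1}$: concretely, $A^n$ is the base group $C_p^{p^n}$ of $P_n = C_p \wr P_{n-1}$ viewed as $P_{n-1}^{\text{(number of blocks)}}$, where the $C_p$ on top permutes the $p$ blocks cyclically and $P_{n-1}$ acts diagonally-then-coordinatewise in the way that makes each block a copy of the $P_{n-1}$-module $A^{n-1}$. Here I must be a little careful to check that this is the \emph{same} action as the one on $A^{n-1}$: a conjugate ${}^x\sigma_{n-1}$ for $x \in P_{n-1}$ corresponds under Remark~\ref{rem:lambda} to the coordinate indexed by $x(0,\dots,0)$, and the $P_{n-1}$-action on these coordinates is the regular-like permutation action that matches the defining action of $P_{n-1}$ on its own base group. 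Once that identification is in place, the $p=2$ case is immediate from Lemma~\ref{lem:preUni}\,\enref{preUni-2} with $P = P_{n-1}$, $M = A^{n-1}$: it gives that $Q = P_{n-1} \wr C_2 = P_n$ acts uniserially on $M^2 = A^n$, of length $2 \cdot 2^{n-1} = 2^n$.

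For odd $p$ I would either invoke Weir's original argument or supply the analogue of Lemma~\ref{lem:preUni} for $p$ copies. The key computation is the same: if $[P_{n-1},A^{n-1}]$ has index $p$ in $A^{n-1}$, then writing $M = A^{n-1}$ and $Q = C_p \wr P_{n-1}$ one shows $[Q,\dots,Q,M^p]$ (with $p$ copies of $Q$) equals $[P_{n-1},M]^p$, because the top $C_p = \langle \sigma \rangle$ acting on the ``diagonal'' $\{(x,x,\dots,x)\}$ produces, via the $(p-1)$-fold commutator with $\sigma$, exactly the coordinatewise commutators $[g,x]$ in each block — this is the single-Jordan-block fact from the $n=1$ case applied blockwise. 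Then the same induction-on-$r$ as in Lemma~\ref{lem:preUni}\,\enref{preUni-2} gives $N_{p(r-1)+1} = M_r^p$ for $r \le \ell+1$ where $\ell = p^{n-1}$, hence $N_{p\ell - (p-1)} = M_\ell^p \ne 0$, and finishing off the remaining $p-1$ steps of the lower central-type series on $M_\ell^p$ shows $N_{p\ell} \ne 0$, i.e. the action is uniserial of length $p \cdot p^{n-1} = p^n$.

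The main obstacle, I expect, is not any single calculation but getting the module identification honest: one must verify that the $P_{n-1}$-module structure on a single ``block'' of $A^n$ really is $A^{n-1}$ and not some twist, and that the top copy of $C_p$ permutes the blocks in the way that makes the $p$-copies lemma apply verbatim. This is exactly the associativity statement $P_n \cong P_{n-1} \wr C_p \cong C_p \wr P_{n-1}$ combined with the description of $A^j$ inside $P_n/T_j$ from Section~\ref{sec:Tj}, so it should go through cleanly once set up, but it is the place where a careless identification would break the length count.
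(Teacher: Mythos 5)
Your proposal is correct and takes essentially the same route as the paper: for $p=2$ it is precisely the paper's induction on $n$ via Lemma~\ref{lem:preUni}\,\enref{preUni-2} applied to $P_n \cong P_{n-1} \wr C_2$ acting on $A^n \cong (A^{n-1})^2$, and for odd $p$ the paper simply cites Weir's Theorem~2, which is one of your two stated options (your sketched $p$-copies analogue of Lemma~\ref{lem:preUni} is also sound, just not needed). The only slip is the phrase describing $A^n$ as ``the base group $C_p^{p^n}$ of $P_n = C_p \wr P_{n-1}$'' --- it is the base group of $P_{n+1} = C_p \wr P_n$ regarded as a $P_n$-module --- but the identification you actually use, $A^n \cong (A^{n-1})^p$ with the base $P_{n-1}^p$ of $P_{n-1}\wr C_p$ acting blockwise and the top $C_p$ permuting the blocks, is the correct one and matches the paper.
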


\begin{proof}
By \cite[Theorem 2]{Weir} we need only consider the case $p=2$. For $n=1$ this is immediate; and for $n \geq 2$ it follows from Lemma~\ref{lem:preUni}\,\enref{preUni-2} by induction on~$n$, as the action of $P_n$~on $A^n$ is the induced action of $P_{n-1} \wr C_2$~on $(A^{n-1})^2$.
\end{proof}

\begin{proof}[Proof of Proposition~\ref{prop:t2}]
For odd~$p$ we may take $B = A^{n-1}$ by Theorems 2~and 6 of Weir's paper~\cite{Weir}, so from now on we take $p=2$. Corollary~\ref{cor:t1} tells us that if $N \trianglelefteq P_n$ is abelian then $N \leq T_{n-2}$. Now $T_{n-2}$ is the base group of  $P_2 \wr P_{n-2}$, and $P_2 \cong D_8$: so $T_{n-2} \cong (D_8)^{2^{n-2}}$; and $P_n \cong T_{n-2} \rtimes P_{n-2}$, where $P_{n-2}$ acts by permuting the copies of~$D_8$ transitively.

Since $N$ is abelian, its projection onto each~$D_8$ must be abelian too; and since $P_{n-2}$ acts transitively, each projection must be the same abelian subgroup of~$D_8$. But $D_8$ only has one abelian subgroup of exponent~$4$. So if $N$ has exponent four then it is contained in $B = (C_4)^{2^{n-2}}$. Since $B$ is normal in $P_n$ it follows that $B$ is the unique largest abelian normal subgroup of exponent four, and hence characteristic in~$P_n$.

We have $P_n/B \cong (D_8/C_4)^{2^{n-2}} \rtimes P_{n-2} \cong C_2^{2^{n-2}} \rtimes P_{n-2} \cong P_{n-1}$. Writing $B^{n-1}$ for $B$, we see that $P_1 \cong D_8/C_4$ acts uniserially on $B^1 \cong C_4$; and that $B^{n-1} \cong (B^{n-2})^2$ in $P_n \cong P_{n-1} \wr C_2$. So the action of $P_{n-1}$ on $B^{n-1}$ is uniserial by Lemma~\ref{lem:preUni}\,\enref{preUni-2}\@.
\end{proof}

\section{The Hall subgroup and Proposition~\ref{prop:AutPn}}
\label{sec:Appendix-A}

\noindent
The group of units $\mathbb{F}_p^{\times}$ is cyclic of order $p-1$: let $r$ be a generator. Define $\eta \in \Sym(\f)$ by $\eta(x) = rx$. Then $\eta$ is a $(p-1)$-cycle, with $\eta(0) = 0$. Since the $\sigma$ of Lemma~\ref{lem:genSn} is given by $\sigma(x) = x+1$, we have ${}^{\eta} \sigma = \sigma^r$.

\begin{lem}
\label{lem:Covello}
For $0 \leq i \leq n-1$ define $\eta_i \in \Sym(\f^n)$ by
\[
\eta_i(\lambda_0,\lambda_1,\ldots,\lambda_{n-1}) = (\lambda_0,\ldots,\eta(\lambda_i),\ldots,\lambda_{n-1}) \, .
\]
and set $H = \langle \eta_0,\ldots,\eta_{n-1} \rangle$. Then $H \cong (C_{p-1})^n$, and $H \leq N_{S_{p^n}}(P_n)$.
\end{lem}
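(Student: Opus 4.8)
The plan is to treat the two claims separately. For the isomorphism $H\cong (C_{p-1})^n$, I would first note that each $\eta_i$ has order $p-1$, because $\eta$ is a $(p-1)$-cycle on $\f$ (it cyclically permutes $\f^{\times}$ and fixes $0$), and $\eta_i$ acts as $\eta$ in coordinate $i$ and trivially elsewhere. Since distinct $\eta_i,\eta_j$ act on distinct coordinates they commute, so there is a surjection $(C_{p-1})^n\twoheadrightarrow H$. For injectivity I would compute
\[
\Bigl(\textstyle\prod_{i=0}^{n-1}\eta_i^{a_i}\Bigr)(\lambda_0,\dots,\lambda_{n-1})=(r^{a_0}\lambda_0,\dots,r^{a_{n-1}}\lambda_{n-1}),
\]
and evaluate at $(1,1,\dots,1)$: this forces $r^{a_i}=1$, i.e.\ $(p-1)\mid a_i$, for every $i$. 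Hence $\abs{H}=(p-1)^n$ and $H\cong (C_{p-1})^n$.

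For $H\le N_{S_{p^n}}(P_n)$ it suffices to show ${}^{\eta_i}\sigma_k\in P_n$ for all $i,k$: then ${}^{\eta_i}P_n\subseteq P_n$, and as ${}^{\eta_i}P_n$ is a subgroup of $S_{p^n}$ of the same order as $P_n$, this gives ${}^{\eta_i}P_n=P_n$, so each generator $\eta_i$ of $H$ normalizes $P_n$. The computation I would carry out is that ${}^{\eta_i}\sigma_k=\sigma_k$ whenever $k\ne i$, while ${}^{\eta_i}\sigma_i=\sigma_i^{\,r}$ (the coordinate-$i$ analogue of the identity ${}^{\eta}\sigma=\sigma^{r}$ noted above). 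To verify this I would apply $\eta_i\sigma_k\eta_i^{-1}$ to an arbitrary tuple, distinguishing the cases $i>k$, $i<k$, $i=k$. If $i>k$, then $\eta_i$ and $\sigma_k$ act on disjoint coordinates and $\eta_i$ leaves the trigger condition ``$\lambda_j=0$ for all $j<k$'' untouched, so they commute. If $i<k$, conjugation by $\eta_i$ replaces the test ``$\lambda_i=0$'' occurring in that condition by ``$\eta^{-1}(\lambda_i)=0$'', which is equivalent since $\eta$ fixes only $0$, and it does not affect coordinate $k$; hence ${}^{\eta_i}\sigma_k=\sigma_k$ again. If $i=k$, the trigger condition is unaffected while the action on coordinate $i$ becomes $\lambda_i\mapsto \eta\bigl(\sigma(\eta^{-1}(\lambda_i))\bigr)=\lambda_i+r$, which is exactly the effect of $\sigma_i^{\,r}$. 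In all cases ${}^{\eta_i}\sigma_k\in\langle\sigma_0,\dots,\sigma_{n-1}\rangle=P_n$, completing the proof.

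I do not expect a genuine obstacle: the whole argument is bookkeeping. The one point that needs a little care is that the $\sigma_k$ are only ``conditional'' versions of $\sigma$, so one must check that conjugating by $\eta_i$ neither changes the trigger condition nor moves the action away from coordinate $k$ — and both of these are immediate from the facts that $\eta$ is a permutation of $\f$ fixing exactly $0$ and that $\eta_i$ is supported on the single coordinate $i$.
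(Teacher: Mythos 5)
Your proposal is correct and follows essentially the same route as the paper: the paper also notes that $H\cong(C_{p-1})^n$ is immediate and verifies normalization via the formula ${}^{\eta_j}\sigma_i=\sigma_i^r$ for $j=i$ and ${}^{\eta_j}\sigma_i=\sigma_i$ for $j\neq i$, which hinges (as in your case analysis) on the fact that $\eta(0)=0$. You have merely written out the bookkeeping that the paper leaves implicit.
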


\noindent
Corollary~\ref{cor:isHall} below shows that $H$ is a Hall $p'$-subgroup of $N_{S_{p^n}}(P_n)$.

\begin{proof}
$H \cong (C_{p-1})^n$ is clear. And ${}^{\eta_j}\sigma_i = \begin{cases} \sigma_i^r & j = i \\ \sigma_i & j \neq i \end{cases}$, since $\eta(0) = 0$.
\end{proof}

\begin{ex}
\label{ex:S27-3}
For $p^n=3^3$ we have $r=2$ and
\begin{align*}
\eta_0 & = (9\;18)(10\;19)(11\;20)(12\;21)(13\;22)(14\;23)(15\;24)(16\;25)(17\;26) \\
\eta_1 & = (3\;6)(4\;7)(5\;8)(6\;9) \cdot (12\;15)(13\;16)(14\;17)\cdot(21\;24)(22\;25)(23\;26) \\
\eta_2 & = (1\;2)(4\;5)(7\;8)(10\;11)(13\;14)(16\;17)(19\;20)(22\;23)(25\;26) \, .
\end{align*}
\end{ex}

\noindent
We are now in a position to prove Proposition~\ref{prop:AutPn}\@.

\begin{rem}
\label{rem:Ext}
The following observation follows from the fact that every submodule of $\zz^n$ is free of rank${} \leq n$: A finite abelian group $G$ is isomorphic to a subgroup of $(C_m)^n$ if and only if the exponent of $G$ divides~$m$, and $G$ has a generating set of size${} \leq n$.
\end{rem}

\begin{proof}[Proof of Proposition~\ref{prop:AutPn}]
We show that $\Aut(P_n)$ has a normal Sylow $p$-sub\-group $Q$, and an abelian Hall $p'$-subgroup $A$ with exponent dividing $p-1$ and at most $n$ generators; the result follows by Corollary~\ref{cor:selfCentralizing} and Lemma~\ref{lem:Covello}\@.

It is well known that $\Aut(C_p) \cong C_{p-1}$, see e.g.\@ \cite[Thm 1.3.10, p.~12]{Gor}\@. That deals with the case $n=1$, so now take $n \geq 2$.

\item \underline{Step 1}: The subgroups $B_i$ and the map $\phi$.
\\
Proposition~\ref{prop:t2} says that $P_n$ has a characteristic abelian subgroup~$B$ such that $P_n/B \cong P_{n-1}$ acts uniserially on~$B$. Define $B_i$ inductively for $i \geq 0$ by $B_0 = B$ and $B_{i+1} = [P_n,B_i]$. Then each $B_i$ is characteristic in~$P_n$; $B_i \leq B_{i-1}$; and the factor group $B_{i-1}/B_i$ is cyclic of order~$p$ for all $i \leq p^{n-1}$, and $B_{p^{n-1}} = 1$.

As each term is characteristic in $P_n$, the normal series $P_n > B = B_0 > B_1 > \cdots > B_{p^{n-1}} = 1$ induces
\[
\phi \colon \Aut(P_n) \rightarrow \Aut(P_n/B) \times \prod_{i = 1}^{p^{n-1}} \Aut(B_{i-1}/B_i) \,, 
\]
\item \underline{Step 2:} $\Aut(P_n)$ has a normal Sylow $p$-subgroup $Q$, and an abelian Hall $p'$-subgroup $A$ of exponent dividing~$p-1$.
\\
The kernel of~$\phi$ is a $p$-group by~\cite[Cor 5.3.3, p.~179]{Gor}\@. Since $P_n/B \cong P_{n-1}$ and $\Aut(B_{i-1}/B_i) \cong \Aut(C_p) \cong C_{p-1}$, our $\phi$ is a map
\[
\phi \colon \Aut(P_n) \rightarrow \Aut(P_{n-1}) \times (C_{p-1})^{p^{n-1}} \, .
\]
By induction, $\Aut(P_{n-1})$ has a normal Sylow $p$-subgroup whose factor group is abelian of exponent dividing $p-1$. Now apply Remark~\ref{rem:PHall}\@.
\item \underline{Step 3}: The kernel $K$ of $A \hookrightarrow \Aut(P_n) \rightarrow \Aut(P_n/B)$ is cyclic.
\\
Suppose that $\alpha \in K$ acts trivially on $B_{i-1}/B_i$ for some~$i$. From $B_i = [P_n,B_{i-1}]$ it follows that $B_i/B_{i+1}$ is generated by elements of the form $[g,x]B_{i+1}$, with $x \in B_{i-1}$ and $g \in P_n$. Then $\alpha([g,x]) = [\alpha(g),\alpha(x)]$. Since $\alpha$ acts trivially on $B_{i-1}/B_i$, we have $\alpha(x) = xy$ for some $y \in B_i$; and since $\alpha \in K$ we have $\alpha(g) = gz$ for some $z \in B$. So $\alpha([g,x]) = [gz,xy] = [g,xy] \in [g,x]\cdot [P_n,B_i] = [g,x]B_{i+1}$. So $\alpha$ acts trivially on $B_i/B_{i+1}$ too. Hence: if $\alpha \in K$ acts trivially on $B_0/B_1$ then it acts trivially on each $B_{i-1}/B_i$, meaning that $\alpha \in \ker(\phi) \subseteq Q$.
But $A \cap Q = 1$, so $K$ acts faithfully on $B_0/B_1 \cong C_p$ and is therefore cyclic.
\item \underline{Step 4}:
$A$ has at most $n$ generators.
\\
$K$ is cyclic, and $A/K$ is isomorphic to a $p'$-subgroup of $\Aut(P_{n-1})$. By induction and Remark~\ref{rem:PHall}, $A/K$ is isomorphic to a subgroup of $C_{p-1}^{n-1}$ and has at most $n-1$ generators. So $A$ has at most $n$ generators.
\end{proof}

\begin{cor}
\label{cor:isHall}
The group $H$ constructed in Lemma~\ref{lem:Covello} is a Hall $p'$-subgroup of $N_{S_{p^n}}(P_n)$, and its image in $\Aut(P_n)$ is a Hall $p'$-subgroup of $\Aut(P_n)$.
\end{cor}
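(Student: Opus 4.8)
The plan is to reduce both claims to a count of $p'$-parts, using only Proposition~\ref{prop:AutPn} and Lemma~\ref{lem:Covello}; no deeper structural input is needed. First I would handle $N := N_{S_{p^n}}(P_n)$. By Proposition~\ref{prop:AutPn}\,\enref{AutPn-1} we have $N/P_n \cong C_{p-1}^n$, so $\abs{N} = \abs{P_n}\,(p-1)^n$; since $P_n$ is a $p$-group and $\gcd(p,p-1)=1$, the $p'$-part of $\abs{N}$ is exactly $(p-1)^n$. But Lemma~\ref{lem:Covello} already exhibits $H \le N$ with $H \cong C_{p-1}^n$, hence $\abs{H} = (p-1)^n$, a $p'$-number equal to the full $p'$-part of $\abs{N}$. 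A subgroup whose order is the $p'$-part of the group order is by definition a Hall $p'$-subgroup, so this settles the first claim (and sidesteps any appeal to a Hall existence theorem).

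Next I would pass to the conjugation homomorphism $c \colon N \to \Aut(P_n)$. Its kernel is $C_{S_{p^n}}(P_n)$, which by Proposition~\ref{prop:AutPn}\,\enref{AutPn-1} (equivalently by Corollary~\ref{cor:selfCentralizing}) equals $Z(P_n)$ and in particular lies in $P_n$. Since $\abs{H}$ is prime to $p$ while $P_n$ is a $p$-group, $H \cap \ker c = 1$, so $c$ maps $H$ isomorphically onto $c(H)$ and $\abs{c(H)} = (p-1)^n$. On the other hand, Proposition~\ref{prop:AutPn}\,\enref{AutPn-2} says $\Aut(P_n)$ has a normal Sylow $p$-subgroup with factor group $C_{p-1}^n$, so the $p'$-part of $\abs{\Aut(P_n)}$ is again $(p-1)^n$. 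Hence $c(H)$ is a $p'$-subgroup of $\Aut(P_n)$ of the correct order, i.e.\ a Hall $p'$-subgroup, which finishes the proof.

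I do not expect a genuine obstacle here: the statement is essentially bookkeeping built on Proposition~\ref{prop:AutPn} and Lemma~\ref{lem:Covello}. The only point needing a moment's care is the identification $\ker c = Z(P_n) \le P_n$, which is precisely what guarantees that $H$ embeds into $\Aut(P_n)$ with its order intact; after that, everything is a comparison of orders.
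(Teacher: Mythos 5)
Your argument is correct and is exactly the paper's proof, which simply says ``By Proposition~\ref{prop:AutPn} it has the correct order'': you have spelled out the same order comparison, via $N_{S_{p^n}}(P_n)/P_n \cong C_{p-1}^n$ and the normal Sylow $p$-subgroup of $\Aut(P_n)$, together with the (routine but worth stating) observation that $H$ meets $\ker c = Z(P_n)$ trivially. Nothing further is needed.
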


\begin{proof}
By Proposition~\ref{prop:AutPn} it has the correct order.
\end{proof}

\section{Direct summands of \texorpdfstring{$M^n$}{M\textasciicircum n} for uniserial \texorpdfstring{$M$}{M}}
\label{sec:u}

\noindent
In this section we take $P$ to be a finite $p$-group.

\begin{lem}
\label{lem:u0}
Let $M \neq 0$ be a uniserial $\f P$-module. Then there is some $a_0 \in \f P$ with $a_0 M = C_M(P)$ and $a_0 [P,M] = 0$.
\end{lem}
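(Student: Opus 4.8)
The plan is to pass to the group algebra $\f P$ and exploit its augmentation ideal. Let $I$ be the augmentation ideal of $\f P$, i.e.\ the kernel of the algebra map $\f P \to \f$ sending every $g \in P$ to~$1$. Since $I$ is spanned by $\{g-1 : g \in P\}$, for each submodule $N \leq M$ the subgroup $[P,N]$ coincides with the submodule $IN$. Writing $M_1 = M$ and $M_{r+1} = [P,M_r]$ as in the definition of uniserial action, a trivial induction then gives $M_{r+1} = I^{r}M$ for all $r \geq 0$ (with $I^{0} = \f P$). Let $\ell$ be the length of~$M$. Uniseriality says $M_{\ell} = I^{\ell - 1}M \neq 0$ while $M_{\ell+1} = I^{\ell}M = 0$; moreover $M_{\ell}$ has order~$p$, hence is one-dimensional over~$\f$, and $M_{\ell} = C_M(P)$ by Remark~\ref{rem:uniserial-1}.

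With this in hand I would produce $a_0$ as follows. Since $I^{\ell-1}M = M_{\ell} \neq 0$, it cannot be that $aM = 0$ for every $a \in I^{\ell-1}$; so pick $a_0 \in I^{\ell-1}$ with $a_0 M \neq 0$. Then $a_0 M$ is a nonzero $\f$-subspace of $I^{\ell-1}M = C_M(P)$, and the latter is one-dimensional, so $a_0 M = C_M(P)$: this is the first of the two required identities.

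For the second, it suffices to note that, since $a_0 \in I^{\ell-1}$,
\[
a_0\,[P,M] \;=\; a_0\,IM \;\subseteq\; I^{\ell-1}\,I\,M \;=\; I^{\ell}M \;=\; M_{\ell+1} \;=\; 0 \, .
\]
I do not expect a genuine obstacle here: the whole argument is really one observation about powers of the augmentation ideal, and the only points needing care are the bookkeeping identity $M_{r+1} = I^{r}M$ and the fact that $C_M(P)$ is one-dimensional, both of which are immediate from the description of a uniserial action recalled above. The case $\ell = 1$ is the degenerate instance $I^{\ell-1} = \f P$ of the argument, in which one may take $a_0 = 1$.
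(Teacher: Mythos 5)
Your argument is correct, but it is not the paper's argument. You work with the augmentation ideal $I_{\mathrm{aug}} = \ker(\f P \to \f)$, use the identity $[P,N] = I_{\mathrm{aug}}N$ to rewrite the lower central series of the action as $M_{r+1} = I_{\mathrm{aug}}^{\,r}M$, and then take any $a_0 \in I_{\mathrm{aug}}^{\,\ell-1}$ not annihilating $M$: uniseriality gives $I_{\mathrm{aug}}^{\,\ell-1}M = M_\ell = C_M(P)$ one-dimensional and $I_{\mathrm{aug}}^{\,\ell}M = 0$, which yields both required identities (and your remark that $\ell=1$ is the degenerate case $a_0=1$ is fine). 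The paper instead lets $I$ be the annihilator of $M$ in $\f P$ and runs a fixed-point counting argument for the action of the $p$-group $P \times P$ on the finite ring $\f P/I$ by $(x,y)\cdot(r+I)=xry^{-1}+I$: since the number of singleton orbits is divisible by $p$ and $0+I$ is one of them, there is $a_0 \notin I$ with $ga_0h \equiv a_0 \pmod I$ for all $g,h\in P$, whence $a_0M \leq C_M(P)$, $a_0[P,M]=0$, and $a_0M \neq 0$; uniseriality enters only at the end, to say that the socle $C_M(P)$ is simple so that $a_0M = C_M(P)$. The two proofs buy slightly different things: yours is more explicit, locating $a_0$ in the top nonzero power of the augmentation ideal and leaning on the full uniserial filtration (in particular on $\dim C_M(P)=1$ via Remark~\ref{rem:uniserial-1}); the paper's produces an element whose image in $\operatorname{End}(M)$ is invariant under $P$ acting on both sides, an argument that needs nothing about $M$ beyond simplicity of its socle. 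Either version supports the later uses of $a_0$ (e.g.\ in Lemmas \ref{lem:u2} and \ref{lem:u3}), since only the two stated identities, applied componentwise to $M^n$, are ever used.
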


\begin{proof}
Let $I = \{a \in \f P \mid aM = 0\}$, the annihilator of $M$ in~$\f P$. Observe that $I$ is a two-sided ideal in $\f P$, and proper since $M \neq 0$. Hence the quotient ring $R = \f P/I$ has order $p^d$ for some $d \geq 1$. Now, the $p$-group $P \times P$ acts on $R$ via $(x,y) \cdot (r+I) = xry^{-1} + I$; and so the number of length one orbits has to be divisible by~$p$. As $0+I$ is one such orbit, it follows that $a_0+I$ is fixed by $P \times P$ for some $a_0 \notin I$. Then for all $g,h \in P$ and all $x \in M$ we have $ga_0hx = a_0x$. Hence $a_0 M$ is a submodule of $C_M(P)$; and since $[P,M]$ is generated by elements of the form $(h-1)x$, it follows that $a_0 [P,M] = 0$. Moreover, since $a_0 \notin I$ we have $a_0 M \neq 0$. But since $M$ is uniserial it follows that $C_M(P)$ is simple, so $a_0 M = C_M(P)$.
\end{proof}

\begin{lem}
\label{lem:u1}
Let $P$ be a $p$-group and $M$ a length $\ell$ uniserial $\f P$-module. Let $N_v \subseteq M^n$ be the cyclic submodule generated by $v = (v_1,\ldots,v_n) \in M^n$. Then the following statements are equivalent:
\begin{enumerate}
\item \label{enum:u1-1}
As an $\f P$-module, $N_v$ is uniserial of length~$\ell$.
\item \label{enum:u1-2}
$\dim_{\f} (N_v) = \ell$ and $\dim_{\f} (C_{N_v}(P)) = 1$.
\item \label{enum:u1-3}
There is some $i \in \{1,\ldots,n\}$ with the following properties:
\begin{enumerate}
\item \label{enum:u1-3-1}
If $a \in \f P$ satisfies $av_i = 0$, then $av = 0$.
\item \label{enum:u1-3-2}
$v_i \in M$ lies outside $[P,M]$.
\end{enumerate}
\end{enumerate}
\end{lem}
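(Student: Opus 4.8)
The plan is to prove the cycle of implications $(\ref{enum:u1-1})\Rightarrow(\ref{enum:u1-2})\Rightarrow(\ref{enum:u1-3})\Rightarrow(\ref{enum:u1-1})$, using the structural facts about uniserial modules recalled before Remark~\ref{rem:uniserial-1} together with Lemma~\ref{lem:u0}. Throughout, write $\pi_i\colon M^n\to M$ for the $i$-th projection; each $\pi_i$ is $\f P$-linear, so $\pi_i(N_v)=N_{v_i}$, the cyclic submodule of $M$ generated by~$v_i$. Since $M$ is uniserial, $N_{v_i}$ is either $0$ (if $v_i=0$) or one of the terms $M_r$, and $N_{v_i}=M$ precisely when $v_i\notin[P,M]=M_2$; in general $\dim N_{v_i}\le\ell$ with equality iff $v_i\notin[P,M]$.

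For $(\ref{enum:u1-1})\Rightarrow(\ref{enum:u1-2})$: if $N_v$ is uniserial of length~$\ell$ then $\dim_\f N_v=\ell$ by definition, and $C_{N_v}(P)$ is the unique minimal submodule of the uniserial module $N_v$, hence simple; since $P$ is a $p$-group and $\f=\f[p]$, the only simple $\f P$-module is trivial of dimension~$1$, so $\dim_\f C_{N_v}(P)=1$. For $(\ref{enum:u1-2})\Rightarrow(\ref{enum:u1-3})$: choose $i$ with $\dim N_{v_i}$ maximal. First I claim this maximum is~$\ell$, i.e.\ some $v_i\notin[P,M]$: if every $v_i\in[P,M]$ then $N_v\subseteq[P,M]^n=(M_2)^n$, so $\dim N_v\le n\dim M_2 = n(\ell-1)$ — this does not immediately contradict $\dim N_v=\ell$ when $n$ is large, so instead argue via the generator: the element $v$ generates $N_v$, and $N_v/[P,N_v]$ is a quotient of the cyclic module $N_v$, hence cyclic, hence (being a module for the $p$-group~$P$ killed by the augmentation ideal action one step) of dimension~$1$; then $N_v$ uniserial-or-not, $\dim N_v=\ell$ forces the descending chain $N_v\supsetneq[P,N_v]\supsetneq\cdots$ to have all quotients one-dimensional, so $v\notin[P,M^n]=([P,M])^n$, i.e.\ some $v_i\notin[P,M]$, giving \enref{u1-3-2}. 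For \enref{u1-3-1}: the map $a\mapsto av$ from $\f P$ onto $N_v$ factors through $a\mapsto av_i$ from $\f P$ onto $N_{v_i}$ iff the annihilator of $v_i$ is contained in that of $v$; since $v_i\notin[P,M]$ we have $N_{v_i}=M$, which has dimension $\ell=\dim N_v$, so the surjection $\f P\to N_{v_i}$ and $\f P\to N_v$ have annihilator ideals of equal codimension $\ell$ in $\f P$ — but a priori these ideals could differ. Here I would invoke Lemma~\ref{lem:u0}: the annihilator of $M$ in $\f P$ has codimension exactly $\ell$, and the annihilator of $v_i$ (which generates $M$) equals the annihilator of $M$; since $\dim N_v=\ell$ as well, $\operatorname{Ann}(v)$ also has codimension $\ell$ and contains $\operatorname{Ann}(v_i)=\operatorname{Ann}(M)$, hence equals it, which is exactly \enref{u1-3-1}.

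For the remaining implication $(\ref{enum:u1-3})\Rightarrow(\ref{enum:u1-1})$: given $i$ as in \enref{u1-3}, property \enref{u1-3-1} says the $\f P$-linear surjection $\f P\to N_{v_i}$, $a\mapsto av_i$, and the surjection $\f P\to N_v$, $a\mapsto av$, have the same kernel, so composing gives an $\f P$-isomorphism $N_v\xrightarrow{\ \sim\ }N_{v_i}$; and \enref{u1-3-2} says $v_i\notin[P,M]$, so $N_{v_i}=M$, which is uniserial of length~$\ell$. Hence $N_v\cong M$ is uniserial of length~$\ell$, as required.

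The main obstacle is the step $(\ref{enum:u1-2})\Rightarrow(\ref{enum:u1-3})$, specifically pinning down that $\dim N_v=\ell$ together with $\dim C_{N_v}(P)=1$ really forces a single coordinate $v_i$ to "carry" the whole module in the precise annihilator sense of \enref{u1-3-1}. The cleanest route is the one above: note $N_v$ is cyclic, so $N_v/[P,N_v]$ is cyclic hence $1$-dimensional (as $P$ is a $p$-group), and then $\dim N_v = \ell$ forces every quotient in the lower central series of $N_v$ under $P$ to be $1$-dimensional, which makes $N_v$ uniserial of length $\ell$; comparing with the uniserial module $M$ via Lemma~\ref{lem:u0} (codimension of annihilators) then yields~\enref{u1-3}. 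One should double-check the edge case $\ell=1$, where $[P,M]=0$ and every nonzero coordinate works, and the case where several $v_i$ are nonzero but only some lie outside $[P,M]$ — the argument via $\operatorname{Ann}(v)\supseteq\operatorname{Ann}(v_i)=\operatorname{Ann}(M)$ and equal codimension handles these uniformly.
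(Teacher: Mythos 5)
Your arguments for \enref{u1-1}${}\Rightarrow{}$\enref{u1-2} and for \enref{u1-3}${}\Rightarrow{}$\enref{u1-1} are correct and essentially the paper's. The step \enref{u1-2}${}\Rightarrow{}$\enref{u1-3} has a genuine gap: your argument never uses the hypothesis $\dim_{\f} C_{N_v}(P)=1$, and what you actually try to prove, namely that $\dim N_v=\ell$ alone yields~\enref{u1-3}, is false. For \enref{u1-3-2} you claim that $\dim N_v=\ell$ forces every quotient of the chain $N_v\supseteq[P,N_v]\supseteq[P,[P,N_v]]\supseteq\cdots$ to be one-dimensional, hence $v\notin[P,M^n]$. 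Cyclicity only makes the first quotient one-dimensional; the later terms need not be cyclic, and the dimension count does not force the chain to have $\ell$ nonzero terms. A counterexample inside the very situation the paper needs: let $P=P_2\cong D_8$ act on $M=A^2=\f[2]^4$ by permuting coordinates (uniserial of length $\ell=4$ by Lemma~\ref{lem:ux1}), label the points in cyclic order so that $r=(1\,2\,3\,4)\in P$, and take $v=(e_1+e_2,\;e_1+e_3)\in M^2$. Both coordinates lie in $[P,M]=\{u\mid\sum_k u_k=0\}$, yet $\dim N_v=4$: the first projection of $N_v$ is the submodule generated by $e_1+e_2$, which is $[P,M]$ of dimension $3$, and one checks that every $a\in\f[2]P$ with $a(e_1+e_2)=0$ satisfies $a(e_1+e_3)\in\{0,\,e_1+e_2+e_3+e_4\}$, so the kernel of that projection is $0\oplus C_M(P)$, of dimension $1$. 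This $N_v$ does not contradict the Lemma, because $C_{N_v}(P)$ is two-dimensional: it contains $(1+r^2)v=(\underline{1},0)$ and $(1+(1\,2)(3\,4))v=(0,\underline{1})$, with $\underline{1}$ the all-ones vector. But it refutes your intermediate claim and shows that the socle hypothesis cannot be bypassed.

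The derivation of~\enref{u1-3-1} is also unsound: it rests on the assertions that $\operatorname{Ann}_{\f P}(v_i)=\operatorname{Ann}_{\f P}(M)$ and that $\operatorname{Ann}_{\f P}(M)$ has codimension~$\ell$, which you attribute to Lemma~\ref{lem:u0}; that lemma asserts neither, and both fail in general. $\operatorname{Ann}(v_i)$ is merely a left ideal with $\f P/\operatorname{Ann}(v_i)\cong M$, hence of codimension $\ell$, whereas $\operatorname{Ann}(M)$ is the largest two-sided ideal inside it and is usually strictly smaller: in the example above the image of $\f[2]D_8$ in $\operatorname{End}_{\f[2]}(A^2)$ has dimension $6$, so $\operatorname{Ann}(M)$ has codimension $6\neq 4=\ell$. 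The missing idea is the one the paper uses: pick $0\neq w\in C_{N_v}(P)$ and an index $i$ with $w_i\neq 0$, and show that the projection $\phi\colon N_v\to M$, $u\mapsto u_i$, is injective, since a nonzero submodule of $\ker\phi$ would contain a fixed vector with $i$th coordinate zero, linearly independent of~$w$, contradicting $\dim C_{N_v}(P)=1$. Then $\dim N_v=\ell$ makes $\phi$ an isomorphism onto $M$, and both \enref{u1-3-1} (from $av_i=\phi(av)$) and \enref{u1-3-2} (since $v_i=\phi(v)$ generates $M$) follow. As written, your proof of \enref{u1-2}${}\Rightarrow{}$\enref{u1-3} does not go through, and it cannot be repaired without bringing the condition $\dim C_{N_v}(P)=1$ into play in this way.
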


\noindent
Example~\ref{ex:uniserial-3} shows that we can not we cannot dispense with condition~\enref{u1-3-1}\@.

\begin{proof}
\underline{\enref{u1-1}${}\Rightarrow{}$\enref{u1-2}}: Follows from Remark~\ref{rem:uniserial-1}\@.
\item \underline{\enref{u1-2}${}\Rightarrow{}$\enref{u1-3}}:
Pick $0 \neq w \in C_{N_v}(P)$, then $w_i \neq 0$ for some $i \in \{1,\ldots,n\}$. Now consider the $\f P$-module map $\phi \colon N_v \rightarrow M$, $u \mapsto u_i$. If $u \in \ker(\phi)$, then the submodule $U \subseteq N_v$ generated by $u$ satisfies $x_i = 0$ for all $x \in U$. If $U \neq 0$ then $C_U(P) \neq 0$ and therefore $U \cap C_{N_v}(P) \neq  0$. So there is $0 \neq w' \in U \cap C_{N_v}(P)$. As $w_i  \neq 0$ and $w' \subseteq U \subseteq \ker(\phi)$, we see that $w,w'$ are linearly independent, a contradiction. Hence $\phi$ is injective.

Since $av_i = \phi(av)$, this proves~\enref{u1-3-1}\@. Also, since $\phi$ is injective, it is surjective for dimension reasons. So $v_i = \phi(v)$ generates $M$, since $v$ generates $N_v$. This shows~\enref{u1-3-2}, since $[P,M]$ is a proper submodule.
\item \underline{\enref{u1-3}${}\Rightarrow{}$\enref{u1-1}}:
Conversely, \enref{u1-3-1} means that $\phi$ is injective, and since $M$ is uniserial, \enref{u1-3-2} means that $\phi$ is surjective. So $N_v$ is isomorphic to~$M$.
\end{proof}

\begin{lem}
\label{lem:u2}
Let $M \neq 0$ be a length $\ell$ uniserial $\f P$-module; $v_1,\ldots,v_r \in M^n$ elements satisfying the equivalent conditions of Lemma~\ref{lem:u1}; and $N = \sum_{i=1}^r N_{v_i}$ the $\f P$-submodule they generate. Then the following statements are equivalent:
\begin{enumerate}
\item \label{enum:u2-1}
The sum $N = \sum_{i=1}^r N_{v_i}$ is direct.
\item \label{enum:u2-2}
The images of $v_1,\ldots,v_r$ in $M^n/[P,M^n]$ are linearly independent.
\item \label{enum:u2-3}
If $w_i$ is a basis element of $C_{N_{v_i}}(P)$, then $w_1,\ldots,w_r$ are linearly independent.
\end{enumerate}
\end{lem}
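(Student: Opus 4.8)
The plan is to prove the equivalence of \enref{u2-1} and \enref{u2-3} by a short induction, and then to deduce \enref{u2-2}${}\Leftrightarrow{}$\enref{u2-3} formally from Lemma~\ref{lem:u0}. Recall first that, since $v_1,\dots,v_r$ satisfy the conditions of Lemma~\ref{lem:u1}, each $N_{v_i}$ is isomorphic as an $\f P$-module to~$M$, hence uniserial; in particular $C_{N_{v_i}}(P)$ is one-dimensional — so the choice of a basis element $w_i$ makes sense — and $\f w_i = C_{N_{v_i}}(P)$ is the unique minimal submodule of~$N_{v_i}$.

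For \enref{u2-1}${}\Rightarrow{}$\enref{u2-3}: since taking $P$-fixed points commutes with finite direct sums, a direct decomposition $N=\bigoplus_{i=1}^r N_{v_i}$ yields $C_N(P)=\bigoplus_{i=1}^r \f w_i$, so the $w_i$ lie in distinct summands and are linearly independent. For the converse I would induct on~$r$, the case $r\le 1$ being trivial. For the inductive step put $N'=\sum_{i=1}^{r-1}N_{v_i}$; the $w_1,\dots,w_{r-1}$ are linearly independent, so by induction $N'=\bigoplus_{i=1}^{r-1}N_{v_i}$ and hence $C_{N'}(P)=\bigoplus_{i=1}^{r-1}\f w_i$. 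If $N'\cap N_{v_r}\ne 0$ then, $N_{v_r}$ being uniserial, the intersection contains the minimal submodule $\f w_r$ of~$N_{v_r}$; but $w_r$ is $P$-fixed, so $w_r\in C_{N'}(P)=\bigoplus_{i=1}^{r-1}\f w_i$, contradicting linear independence. Hence $N'\cap N_{v_r}=0$ and $N=N'\oplus N_{v_r}=\bigoplus_{i=1}^r N_{v_i}$, as required.

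For \enref{u2-2}${}\Leftrightarrow{}$\enref{u2-3}: choose $a_0\in\f P$ as in Lemma~\ref{lem:u0}, so $a_0M=C_M(P)$ and $a_0[P,M]=0$. Since $M$ is uniserial, $M/[P,M]$ and $C_M(P)$ are both one-dimensional, and $m+[P,M]\mapsto a_0m$ is a well-defined, nonzero, hence injective $\f$-linear map $\theta\colon M/[P,M]\to M$ with image $C_M(P)$. Applying $\theta$ in each coordinate, and using $[P,M^n]=[P,M]^n$, gives an injective $\f$-linear map $M^n/[P,M^n]\hookrightarrow M^n$ sending the class~$\bar v_i$ of~$v_i$ to~$a_0v_i$. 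Moreover $a_0v_i$ is a nonzero $P$-fixed generator of $a_0N_{v_i}$ — nonzero because $a_0$ does not annihilate $N_{v_i}\cong M$, and $P$-fixed because $ga_0h$ acts as $a_0$ for all $g,h\in P$ — so $a_0N_{v_i}=C_{N_{v_i}}(P)$ and we may take $w_i=a_0v_i$. Injectivity of the map then converts linear independence of $\bar v_1,\dots,\bar v_r$ into linear independence of $w_1,\dots,w_r$.

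The substantive step is \enref{u2-3}${}\Rightarrow{}$\enref{u2-1}: its engine is the characteristic feature of uniserial modules, that every nonzero submodule of $N_{v_r}$ absorbs its socle, which is precisely what makes a failure of directness visible inside the fixed-point subspace. By contrast \enref{u2-2}${}\Leftrightarrow{}$\enref{u2-3} is bookkeeping once one notices that the element $a_0$ of Lemma~\ref{lem:u0} identifies the head $M/[P,M]$ with the socle $C_M(P)$; the mildest nuisance there is confirming that $a_0v_i$ really is a basis element of $C_{N_{v_i}}(P)$.
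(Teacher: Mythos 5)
Your proposal is correct, and two of its three pieces coincide with the paper's proof: the implication \enref{u2-1}${}\Rightarrow{}$\enref{u2-3} via compatibility of fixed points with direct sums, and the equivalence \enref{u2-2}${}\Leftrightarrow{}$\enref{u2-3} via the element $a_0$ of Lemma~\ref{lem:u0}, which identifies $M^n/[P,M^n]$ with $C_{M^n}(P)$ and lets one take $w_i = a_0 v_i$ up to a scalar (you spell out the verification that $a_0 v_i$ is indeed a nonzero fixed vector, which the paper compresses into ``up to multiplication by an invertible scalar''). The only genuine divergence is in \enref{u2-3}${}\Rightarrow{}$\enref{u2-1}: the paper argues contrapositively in one line, taking a nontrivial relation $\sum_i u_i = 0$ and pushing it ``by nilpotence'' down to the socles to produce a linear dependence among the $w_i$, whereas you induct on $r$ and use the fact that every nonzero submodule of the uniserial module $N_{v_r}$ contains its socle $\f w_r$, so a nonzero intersection $N' \cap N_{v_r}$ would force $w_r$ into $\bigoplus_{i<r} \f w_i$. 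Both arguments rest on the same structural point (the one-dimensional socle sits at the bottom of every nonzero submodule); yours trades the paper's terse simultaneous argument for an induction that is more self-contained and leaves nothing implicit, while the paper's version avoids the induction and handles all summands at once.
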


\begin{proof}
\underline{\enref{u2-2}${}\Leftrightarrow{}$\enref{u2-3}}:
Let $a _0 \in \f P$ be as in Lemma~\ref{lem:u0}\@. Then the map $w \mapsto a_0 w$ induces an isomorphism $M^n/[P,M^n] \rightarrow C_{M^n}(P)$. Up to multiplication by an invertible scalar we then have $w_i = a_0 v_i$, hence \enref{u2-2}${}\Leftrightarrow{}$\enref{u2-3}\@.
\item \underline{\enref{u2-3}${}\Rightarrow{}$\enref{u2-1}}:
If $\sum_i u_i = 0$ with $u_i \in N_{v_i}$ and not all $u_i = 0$, then by nilpotence we get a linear dependence between the~$w_i$.
\item \underline{\enref{u2-1}${}\Rightarrow{}$\enref{u2-3}}:
If $\sum_i N_{v_i}$ is direct, then $\sum_i C_{N_{v_i}}(P)$ is direct too.
\end{proof}

\begin{lem}
\label{lem:u3}
Let $M \neq 0$ be a length $\ell$ uniserial $\f P$-module. For an $\f P$-submodule $N$ of $M^n$, the following four statements are equivalent:
\begin{enumerate}
\item \label{enum:u3-1}
$N$ is a direct summand of $M^n$.
\item \label{enum:u3-2}
$N$ has a generating set $v_1,\ldots,v_r$ satisfying the equivalent conditions of Lemma~\ref{lem:u2}\@.
\item \label{enum:u3-3}
The $\f$-vector spaces $(N+[P,M^n])/[P,M^n]$ and $C_N(P)$ have the same dimension.
\item \label{enum:u3-4}
$M_Z$ is a complement of $N$~in $M^n$ for some $Z \subseteq \{1,2,\ldots,n\}$\@. Here,
$M_Z = \{ (u_1,\ldots,u_n) \in M^n \mid \text{$u_i = 0$ for all $i \not \in Z$} \}$.
\end{enumerate}
If these equivalent conditions hold then:
\begin{enumerate}
\setcounter{enumi}{4}
\item \label{enum:u3-5}
For any complement $L$ of $N$ in $M^n$, the normal subgroup $N$ of $M^n \rtimes P$ has complement $L \rtimes P$.
\item \label{enum:u3-6}
With $r$ as in~\textnormal{\enref{u3-2}}, we have $\dim C_N(P) = r$ in~\textnormal{\enref{u3-3}} and $\abs{Z} = n-r$ in~\textnormal{\enref{u3-4}}\@.
\end{enumerate}
\end{lem}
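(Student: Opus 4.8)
The plan is to base everything on the element $a_0 \in \f P$ from Lemma~\ref{lem:u0}. Since $M \neq 0$ is uniserial, $[P,M]$ is its unique maximal submodule and $C_M(P) = M_{\ell}$ is one-dimensional (Remark~\ref{rem:uniserial-1}); so the $\f P$-endomorphism $x \mapsto a_0 x$ of $M$ maps onto $C_M(P)$ and kills $[P,M]$, hence has kernel exactly $[P,M]$. Applying this in each coordinate, $a_0$ induces an isomorphism $M^n/[P,M^n] \xrightarrow{\sim} C_{M^n}(P)$, and $[P,M^n] = [P,M]^n = \ker(a_0 \colon M^n \to M^n)$. For an $\f P$-submodule $N \leq M^n$ we then get $a_0 N \subseteq N \cap C_{M^n}(P) = C_N(P)$, together with
\[
\dim_{\f}(a_0 N) = \dim_{\f} N - \dim_{\f}(N \cap [P,M^n]) = \dim_{\f}\bigl((N+[P,M^n])/[P,M^n]\bigr).
\]
Thus condition~\enref{u3-3} is exactly the statement that $a_0 N = C_N(P)$; this reformulation is the technical core.

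I would then establish the cycle \enref{u3-1} $\Rightarrow$ \enref{u3-3} $\Rightarrow$ \enref{u3-2} $\Rightarrow$ \enref{u3-4} $\Rightarrow$ \enref{u3-1}, proving \enref{u3-2} $\Rightarrow$ \enref{u3-4} first and unconditionally. For \enref{u3-1} $\Rightarrow$ \enref{u3-3}: from an $\f P$-decomposition $M^n = N \oplus N'$ one gets $C_{M^n}(P) = a_0 M^n = a_0 N \oplus a_0 N'$ with $a_0 N \subseteq C_N(P)$ and $a_0 N' \subseteq C_{N'}(P)$; comparing with $C_{M^n}(P) = C_N(P) \oplus C_{N'}(P)$ forces $a_0 N = C_N(P)$. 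For \enref{u3-2} $\Rightarrow$ \enref{u3-4}: given a generating set $v_1,\dots,v_r$ as in Lemma~\ref{lem:u2}, the sum $N = \bigoplus_i N_{v_i}$ is direct with each $N_{v_i}$ of dimension $\ell$, so $\dim_{\f} N = r\ell$; the images of the $v_i$ in $M^n/[P,M^n] \cong \f^n$ are linearly independent, so some $r$ coordinates $Z^c$ give an invertible $r \times r$ submatrix; then the projection $\pi_{Z^c} \colon M^n \to M^{Z^c}$ carries $N$ onto a submodule whose image in the coinvariants is all of $M^{Z^c}/[P,M^{Z^c}]$, so $\pi_{Z^c}(N) = M^{Z^c}$ by nilpotence of the $P$-action, and the dimension count $r\ell + (n-r)\ell = n\ell$ shows that $M_Z$ is a complement of $N$ with $\abs{Z} = n-r$. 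For \enref{u3-3} $\Rightarrow$ \enref{u3-2}: choose $v_1,\dots,v_r \in N$ lifting a basis of $(N+[P,M^n])/[P,M^n]$; each $v_i$ has some component lying outside $[P,M]$, hence generating $M$, so $v_i$ satisfies the conditions of Lemma~\ref{lem:u1}, and Lemma~\ref{lem:u2} shows the $v_i$ generate a submodule $N_0 = \bigoplus_i N_{v_i} \leq N$ with $\dim_{\f} C_{N_0}(P) = r$. Applying the case \enref{u3-2} $\Rightarrow$ \enref{u3-4} already proved to $N_0$ gives $M^n = N_0 \oplus M_Z$, whence $N = N_0 \oplus (N \cap M_Z)$ by the modular law; were $N \cap M_Z \neq 0$ it would contribute to $C_N(P)$ and violate $\dim_{\f} C_N(P) = r$, so $N = N_0$. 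Finally \enref{u3-4} $\Rightarrow$ \enref{u3-1} is immediate because $M_Z$ is an $\f P$-submodule.

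The trailing clauses then come for free. For \enref{u3-6}: the decomposition $N = \bigoplus_{i=1}^r N_{v_i}$ forces $\dim_{\f} C_N(P) = r$ since each nonzero uniserial summand has a one-dimensional fixed space, and any complement $M_Z$ of $N$ satisfies $\ell\abs{Z} = \dim_{\f} M^n - \dim_{\f} N = (n-r)\ell$. For \enref{u3-5}: given any $\f P$-module complement $L$ of $N$ (one exists by \enref{u3-1}), the $P$-invariance of $L$ makes $L \rtimes P$ a subgroup of $M^n \rtimes P$; it meets $M^n$ in $L$, hence meets $N$ trivially, while $N(L \rtimes P) \supseteq NL = M^n$ and $P \leq L \rtimes P$ give $N(L\rtimes P) = M^n \rtimes P$.

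I expect the only real difficulty to be organisational rather than conceptual: one must avoid applying Nakayama's lemma directly to $N$ (since $[P,N]$ may be strictly smaller than $N \cap [P,M^n]$) and instead route through the auxiliary direct summand $N_0$, and one must sequence the implications so that \enref{u3-2} $\Rightarrow$ \enref{u3-4} is in hand before it is reused inside \enref{u3-3} $\Rightarrow$ \enref{u3-2}.
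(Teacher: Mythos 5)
Your reduction of \enref{u3-3} to the statement $a_0N = C_N(P)$, your proof of \enref{u3-1}$\Rightarrow$\enref{u3-3}, the Nakayama-style argument for \enref{u3-2}$\Rightarrow$\enref{u3-4}, and the treatment of \enref{u3-5} and \enref{u3-6} are all correct. The gap is in \enref{u3-3}$\Rightarrow$\enref{u3-2}: you take arbitrary lifts $v_1,\ldots,v_r\in N$ of a basis of $(N+[P,M^n])/[P,M^n]$ and assert that, because each $v_i$ has a coordinate outside $[P,M]$, it ``satisfies the conditions of Lemma~\ref{lem:u1}''. Having such a coordinate is only condition \enref{u1-3-2}; it does not give \enref{u1-3-1}, and Example~\ref{ex:uniserial-3} (flagged in the remark right after Lemma~\ref{lem:u1}) exists precisely to show this inference fails: there $v=(v_1,v_2)$ has $v_1\notin[P_2,M]$, yet the cyclic module $N_v$ contains all of $\soc(M^2)$ and is not uniserial. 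Concretely, take $N=M^2$ for that $M$, so \enref{u3-3} holds with $r=2$, and lift the first basis vector of the coinvariants to this $v$ and the second to $(0,v')$ with $v'\notin[P_2,M]$: these are legitimate lifts, but $N_{v}$ fails Lemma~\ref{lem:u1}, Lemma~\ref{lem:u2} cannot be invoked, and the sum $N_{v}+N_{(0,v')}$ is not direct (both contain $0\oplus\soc(M)$), so your auxiliary module $N_0$ with $\dim C_{N_0}(P)=r$ never materializes. Since \enref{u3-3}$\Rightarrow$\enref{u3-2} is the only arrow out of \enref{u3-3} and the only arrow into \enref{u3-2} in your cycle, the equivalence is not established.

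The repair requires a genuinely different idea at this point, and it is what the paper does. Either prove \enref{u3-3}$\Rightarrow$\enref{u3-4} directly: apply the exchange lemma inside $C_{M^n}(P)$ to the standard socle basis $w_1,\ldots,w_n$ to find $Z$ such that the span of $\{w_i \mid i\in Z\}$ complements $C_N(P)$; then the sum $M_Z+N$ is direct (compare socles, using $a_0$), and hypothesis \enref{u3-3} gives $\dim\bigl((M_Z\oplus N)+[P,M^n]\bigr)/[P,M^n]=n$, so $M_Z\oplus N=M^n$ by nilpotence. Or obtain \enref{u3-1}$\Rightarrow$\enref{u3-2} via Krull--Schmidt (a direct summand of $M^n$ is a direct sum of length-$\ell$ uniserial, hence cyclic, modules, and generators of those summands do satisfy Lemmas \ref{lem:u1} and \ref{lem:u2}) and reorder the cycle as \enref{u3-1}$\Rightarrow$\enref{u3-2}$\Rightarrow$\enref{u3-3}$\Rightarrow$\enref{u3-4}$\Rightarrow$\enref{u3-1}. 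Good lifts do exist whenever \enref{u3-3} holds, but your argument must produce them rather than take arbitrary ones.
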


\begin{proof}
\underline{\enref{u3-1}${}\Rightarrow{}$\enref{u3-2}}:
$M^n$ is a direct sum of $n$ copies of the length $\ell$ uniserial module~$M$. By Krull-Schmidt, $N$ is also a direct sum of length $\ell$ uniserial modules; and uniserial modules are cyclic.
\item \underline{\enref{u3-2}${}\Rightarrow{}$\enref{u3-3} and first part of \enref{u3-6}}:
As $N = \bigoplus_{i=1}^r N_{v_i}$ we have $\dim C_N(P) = r$ since $\dim C_{N_{v_i}}(P) = 1$, and $\dim N/[P,N] = r$ since $\dim N_{v_i}/[P,N_{v_i}] = 1$.
\item \underline{\enref{u3-3}${}\Rightarrow{}$\enref{u3-4} and second part of \enref{u3-6}}:
$C_N(P)$ is a subspace of~$C_{M^n}(P)$. Pick $0 \neq w \in C_M(P)$, and define $w_i \in C_{M^n}(P)$ by $w_i = (0,\ldots,0,w,0,\ldots,0) \in M^n$, with $w$ in the $i$th position. Then $w_1,\ldots,w_n$ is a basis of $C_{M^n}(P)$, so by the exchange lemma there is $Z \subseteq \{1,\ldots,n\}$ such that the subspace $W_Z \subseteq C_{M^n}(P)$ on the $w_i$ with $i \in Z$ is a complement to $C_N(P)$. In particular, $\abs{Z} = n - \dim C_N(P)$.
\item Since $M_Z$ has socle $W_Z$ and $N$ has socle $C_N(P)$, it follows that the sum $M_Z + N$ is direct. Now suppose that $x \in M_Z$, $y \in N$ and $x+y \in [P,M^n]$. With $a_0$ as in Lemma~\ref{lem:u0} we have $a_0x + a_0y = 0$. Since $M_Z + N$ is direct, it follows that $a_0x=a_0y=0$, and hence $x,y \in [P,M^n]$. Therefore
\begin{align*}
\dim \frac{(M_Z \oplus N) + [P,M^n]}{[P,M^n]} & = \dim \frac{M_Z + [P,M^n]}{[P,M^n]} + \dim \frac{N + [P,M^n]}{[P,M^n]} \\ & = (n-\dim C_N(P)) + \dim C_N(P) = n \, .
\end{align*}
Hence $(M_Z \oplus N) + [P,M^n] = M^n$, so as $[P, \_ ]$ is nilpotent we conclude that $M_Z \oplus N = M^n$.
\item Finally, \underline{\enref{u3-4}${}\Rightarrow{}$\enref{u3-1}} and \underline{\enref{u3-5}} are clear.
\end{proof}

\section{Uniserial modules and \texorpdfstring{$P_j$}{P\_j}}
\label{sec:MZ}

\begin{rem}
\label{rem:uniserial-2}
By Lemma~\ref{lem:ux1} the natural action of $P_n$ on $M = (\f)^{p^n}$ is uniserial of length $p^n$. Observe that the socle $C_M(P_n)$ is the diagonal subgroup
\[
\Delta(\f) = \{ \underline{v} \in (\f)^{p^n} \mid \text{$v_i = v_j$ for all $i,j$} \} \, ,
\]
and that
\[
[P_n,M] = \{ \underline{v} \in (\f)^{p^n} \mid \sum_i v_i = 0 \} \, .
\]
\end{rem}

\begin{lem}
\label{lem:B-elAb}
Let $0 \leq j \leq n$. Set $K := [T_j, T_j]$. For any $U \leq P_n$ write $\bar{U} = UK/K$. Then $K \leq T_j$, and
\begin{enumerate}
\item \label{enum:B-elab-1}
The quotient module $\bar{T}_j = T_j/K$ is an $\f P_j$-module.
\item \label{enum:B-elab-2}
$\bar{T}_j$ is the direct sum of $n-j$ length $p^j$ uniserial modules isomorphic to $A^j$; these summands are generated by $\sigma_j K, \sigma_{j+1} K,  \ldots, \sigma_{n-1} K$.
\item \label{enum:B-elab-3}
$C_{\bar{T}_j}(P_j)$ is the $\f$-vector space with basis $\Delta(\sigma_j)K, \ldots,\Delta(\sigma_{n-1})K$.
\item \label{enum:B-elab-4}
If $L$ is a direct summand of the $\f P_j$-module $\bar{T}_j$, then there is some $Z \subseteq \{\sigma_j,\ldots,\sigma_{n-1}\}$ such that $\bar{T}_j = L \oplus M_Z$, where $M_Z \subseteq \bar{T}_j$ is the submodule generated by $\{ \sigma_i K \mid \sigma_i \in Z \}$.
\end{enumerate}
\end{lem}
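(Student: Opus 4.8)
The plan runs as follows. First, $K = [T_j,T_j] \leq T_j$ holds by definition, and since $T_j \trianglelefteq P_n$ we get $K \trianglelefteq P_n$, so $P_j \leq P_n$ acts on $\bar{T}_j = T_j/K$ by conjugation. Next I would unwind the wreath-product description from Section~\ref{sec:Tj}: $P_n \cong P_{n-j} \wr P_j$ with base group $T_j = \prod_{x \in \f^j} P_{n-j}^{(x)} \cong P_{n-j}^{p^j}$, where $P_j$ permutes the $p^j$ factors via its natural transitive action on $\f^j$ and the factor $P_{n-j}^{(0)}$ attached to $0 = (0,\ldots,0) \in \f^j$ is $\langle \sigma_j,\ldots,\sigma_{n-1}\rangle$. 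For $x \in \f^j$ and $j \leq i \leq n-1$ let $\sigma_i^{(x)} \in P_{n-j}^{(x)}$ denote the generator corresponding to $\sigma_i = \sigma_i^{(0)}$; by the conjugation rule ${}^{\pi}\delta_x(g) = \delta_{\pi(x)}(g)$ from the proof of Lemma~\ref{lem:genSn}, together with transitivity, the $P_j$-conjugates of $\sigma_i$ are precisely $\{\sigma_i^{(x)} \mid x \in \f^j\}$.

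The core of the argument is to identify $\bar{T}_j$ as an $\f P_j$-module. I would first record, by a short induction on $m$ using that the abelianisation of $G \wr H$ with $H$ transitive is $G^{\mathrm{ab}} \times H^{\mathrm{ab}}$, that $P_m^{\mathrm{ab}}$ is elementary abelian of rank $m$, so that the images of the standard generators form a basis. Since $T_j \cong P_{n-j}^{p^j}$ is a direct product we have $K = (P_{n-j}')^{p^j}$, hence $\bar{T}_j \cong (P_{n-j}^{\mathrm{ab}})^{p^j}$; this is elementary abelian, which together with the first paragraph proves \enref{B-elab-1}, and it decomposes as $\bar{T}_j = \bigoplus_{i=j}^{n-1} M_i$, where $M_i$ is the $\f$-span of $\{\sigma_i^{(x)}K \mid x \in \f^j\}$. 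Each $M_i$ is a $P_j$-submodule, its spanning set being permuted by $P_j$; and since $P_j$ permutes the $\sigma_i^{(x)}$ exactly as it permutes the points $x \in \f^j$, the module $M_i$ is the permutation $\f P_j$-module on the transitive $P_j$-set $\f^j$ — that is, the module $A^j$ of Notation~\ref{notn:Weir}, which by Lemma~\ref{lem:ux1} is uniserial of length $p^j$. Transitivity shows $\sigma_i K$ generates $M_i$ over $\f P_j$, completing \enref{B-elab-2}. Then \enref{B-elab-3} follows, since $C_{\bar{T}_j}(P_j) = \bigoplus_{i=j}^{n-1} C_{M_i}(P_j)$ and, $M_i$ being uniserial, each $C_{M_i}(P_j)$ is one-dimensional (Remark~\ref{rem:uniserial-1}); being fixed by $P_j$, it must be spanned by $\sum_{x \in \f^j} \sigma_i^{(x)}K$, which is the image $\Delta(\sigma_i)K$ of the product $\Delta(\sigma_i) = \prod_{x \in \f^j} \sigma_i^{(x)}$ of all $P_j$-conjugates of $\sigma_i$.

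Part \enref{B-elab-4} should then reduce to the general theory: by \enref{B-elab-2}, $\bar{T}_j$ is a direct sum of $n-j$ copies of the uniserial module $M := A^j$, the $i$th copy $M_i$ being generated by $\sigma_i K$. Hence a direct summand $L$ of $\bar{T}_j$ satisfies condition \enref{u3-1} of Lemma~\ref{lem:u3}, and therefore also condition \enref{u3-4}: there is a set of coordinates, which we record as $Z \subseteq \{\sigma_j,\ldots,\sigma_{n-1}\}$, such that $\bar{T}_j = L \oplus M_Z$ with $M_Z$ the corresponding coordinate submodule; by \enref{B-elab-2} this $M_Z$ is precisely the submodule generated by $\{\sigma_i K \mid \sigma_i \in Z\}$.

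The only step calling for real care is the identification of $\bar{T}_j$ with $\bigoplus_{i=j}^{n-1} A^j$ — in particular, verifying that $\sigma_j,\ldots,\sigma_{n-1}$ descend to a basis of each abelianised factor $(P_{n-j}^{(x)})^{\mathrm{ab}}$ and that $P_j$-conjugation merely permutes these bases according to $\f^j$. This is essentially wreath-product bookkeeping together with the elementary computation of $P_m^{\mathrm{ab}}$; once it is in hand, parts \enref{B-elab-2}, \enref{B-elab-3} and \enref{B-elab-4} follow from Weir's Lemma~\ref{lem:ux1} and from the machinery already built in Lemma~\ref{lem:u3}.
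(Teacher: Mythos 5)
Your proposal is correct and follows essentially the same route as the paper's proof: decompose $\bar{T}_j$ into the $\f P_j$-submodules generated by the conjugates of $\sigma_j K,\ldots,\sigma_{n-1}K$, identify each with the uniserial module $A^j$ via Lemma~\ref{lem:ux1}, read off the socle as the diagonal, and deduce part~\enref{B-elab-4} from Lemma~\ref{lem:u3}\,\enref{u3-1}${}\Leftrightarrow{}$\enref{u3-4}\@. The only difference is that you spell out the wreath-product bookkeeping (e.g.\ $K=(P_{n-j}')^{p^j}$ and $P_m^{\mathrm{ab}}\cong C_p^m$) which the paper leaves implicit.
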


\begin{proof}
\enref{B-elab-1}:
In the factorization $P_n = P_j T_j$, note that $P_j = \langle \sigma_0,\ldots,\sigma_{j-1} \rangle$, and that $T_j$ is the normal closure of $\langle \sigma_j,\ldots,\sigma_{n-1} \rangle$ under the action of $P_j$. Since each $\sigma_i$ has order~$p$, the abelianization of $T_j$ is elementary abelian.
\item \enref{B-elab-2}:
The submodule of $\bar{T}_j$ generated by $\sigma_i K$ has basis consisting of the $p^j$ conjugates of $\sigma_i K$ under the action of~$P_j$. So $\bar{T_j}$ is the direct sum of these submodules, and each is isomorphich to $A^j$. As we recalled in Remark~\ref{rem:uniserial-2}, Weir showed that $A^j$ is uniserial of length $p^j$.
\item \enref{B-elab-3}:
$C_{\bar{T}_j}(P_j)$ is the diagonal subgroup by Remark~\ref{rem:uniserial-2}\@.
\item \enref{B-elab-4}: Lemma~\ref{lem:u3}, specifically \enref{u3-1}${}\Leftrightarrow{}$\enref{u3-4}\@.
\end{proof}

\noindent
For the next lemma we suppose that $L$ is a direct summand of the $\f P_j$-module $\bar{T}_j$. From Lemma~\ref{lem:B-elAb} we know that $\bar{T}_j = L \oplus M_Z$ for some $Z \subseteq \{ \sigma_j,\ldots,\sigma_{n-1} \}$; and that $C_{\bar{T}_j}(P_j)$ has basis $\Delta(\sigma_j)K,\ldots,\Delta(\sigma_{n-1})K$.

\begin{lem}
\label{lem:N2}
Under these circumstances we have:
\begin{enumerate}
\item \label{enum:N2-1}
If $L \nsubseteq \bar{T}_{j+1} + [P_j,\bar{T}_j]$ then we may choose $Z$ such that $\sigma_j \not \in Z$.
\item \label{enum:N2-2}
If $L + [P_j,\bar{T}_j] = \bar{T}_{j+1} + [P_j,\bar{T}_j]$ then $Z = \{\sigma_j\}$.
\item \label{enum:N2-3}
If $L + [P_j,\bar{T}_j] \subsetneq \bar{T}_{j+1} + [P_j,\bar{T}_j]$ then for every complement $D$~of $L$ there are $x,y \in C_D(P_j)$ such that $\Delta(\sigma_j)K$ lies in the support of~$x$, whereas the support of $y \neq 0$ does not contain $\Delta(\sigma_j)K$.
\end{enumerate}
\end{lem}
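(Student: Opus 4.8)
The three parts concern the position of the complement $L$ relative to the filtration layer coming from $\sigma_j$. The key objects are the quotient $\bar T_j/[P_j,\bar T_j]$, which by Lemma~\ref{lem:u2}\,\enref{u2-2} records exactly which generators $\sigma_iK$ are ``used'' by a submodule, and the socle $C_{\bar T_j}(P_j)$ with its distinguished basis $\Delta(\sigma_j)K,\dots,\Delta(\sigma_{n-1})K$. Throughout I will use that $\bar T_{j+1}=T_{j+1}K/K$ is the submodule generated by $\sigma_{j+1}K,\dots,\sigma_{n-1}K$, so that $\bar T_j=\langle\sigma_jK\rangle\oplus\bar T_{j+1}$ as $\f P_j$-modules, and hence $\bar T_{j+1}+[P_j,\bar T_j]$ is the codimension-one subspace of $\bar T_j/[P_j,\bar T_j]$ spanned by the images of $\sigma_{j+1}K,\dots,\sigma_{n-1}K$.

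For \enref{N2-1}: if $L\nsubseteq\bar T_{j+1}+[P_j,\bar T_j]$, then the image of $L$ in $\bar T_j/[P_j,\bar T_j]$ contains a vector with nonzero $\sigma_jK$-component. Choose a generating set $v_1,\dots,v_r$ of $L$ as in Lemma~\ref{lem:u3}\,\enref{u3-2}; by elementary linear algebra on the images $\bar v_i$ in $\bar T_j/[P_j,\bar T_j]$ I may arrange (replacing the $v_i$ by suitable $\f P_j$-linear combinations, which preserves the hypotheses of Lemma~\ref{lem:u2}) that $\bar v_1$ has nonzero $\sigma_jK$-coordinate while $\bar v_2,\dots,\bar v_r$ all have zero $\sigma_jK$-coordinate. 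Now rerun the proof of Lemma~\ref{lem:u3}\,\enref{u3-3}${}\Rightarrow{}$\enref{u3-4}: the socle vectors $w_i=a_0v_i$ are a basis of $C_L(P_j)$, and by the choice of the $v_i$ the vector $w_1$ has nonzero $\Delta(\sigma_j)K$-coordinate while $w_2,\dots,w_r$ lie in the span of $\Delta(\sigma_{j+1})K,\dots,\Delta(\sigma_{n-1})K$. Applying the exchange lemma to extend $w_1,\dots,w_r$ to the standard basis, I can complete using only vectors from $\{\Delta(\sigma_{j+1})K,\dots,\Delta(\sigma_{n-1})K\}$, i.e.\ I obtain $Z\subseteq\{\sigma_{j+1},\dots,\sigma_{n-1}\}$, so $\sigma_j\notin Z$.

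For \enref{N2-2}: the hypothesis says the image of $L$ in $\bar T_j/[P_j,\bar T_j]$ equals the codimension-one subspace spanned by the images of $\sigma_{j+1}K,\dots,\sigma_{n-1}K$; in particular $\dim\big((L+[P_j,\bar T_j])/[P_j,\bar T_j]\big)=n-j-1$, so by Lemma~\ref{lem:u3}\,\enref{u3-3},\,\enref{u3-6} the complement $M_Z$ has $|Z|=n-j-(n-j-1)=1$. To pin down $Z=\{\sigma_j\}$: since $L$'s image avoids the $\sigma_jK$-direction, $L\subseteq\bar T_{j+1}+[P_j,\bar T_j]$, hence $C_L(P_j)$ lies in the span of $\Delta(\sigma_{j+1})K,\dots,\Delta(\sigma_{n-1})K$, which has codimension one with $\Delta(\sigma_j)K$ completing it. Thus the exchange lemma forces $Z=\{\sigma_j\}$, and $M_Z=\langle\sigma_jK\rangle=:$ the $A^j$-summand generated by $\sigma_jK$.

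For \enref{N2-3}, which I expect to be the main obstacle: now $\dim\big((L+[P_j,\bar T_j])/[P_j,\bar T_j]\big)\le n-j-2$, so $C_L(P_j)$ has codimension${}\ge2$ inside $C_{\bar T_j}(P_j)$, and in particular $C_L(P_j)$ avoids the $\Delta(\sigma_j)K$-direction. Let $D$ be any complement of $L$; then $C_D(P_j)$ is a complement of $C_L(P_j)$ in $C_{\bar T_j}(P_j)$ of dimension${}\ge2$. Since $C_L(P_j)\subseteq\langle\Delta(\sigma_{j+1})K,\dots,\Delta(\sigma_{n-1})K\rangle$ and the latter has codimension one, at least one basis vector $y$ of $C_D(P_j)$ can be chosen inside $\langle\Delta(\sigma_{j+1})K,\dots,\Delta(\sigma_{n-1})K\rangle$ — so $y\neq0$ and $\Delta(\sigma_j)K$ is not in the support of $y$. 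For the vector $x$: because $C_D(P_j)$ has dimension${}\ge2$ and cannot be entirely contained in the codimension-one subspace $\langle\Delta(\sigma_{j+1})K,\dots,\Delta(\sigma_{n-1})K\rangle$ (else $C_L(P_j)\oplus C_D(P_j)$ would miss $\Delta(\sigma_j)K$, contradicting that their sum is all of $C_{\bar T_j}(P_j)$), some element $x\in C_D(P_j)$ has nonzero $\Delta(\sigma_j)K$-coordinate, i.e.\ $\Delta(\sigma_j)K$ lies in its support. The subtle point to get right is that $x$ and $y$ can be taken to be \emph{distinct} (not proportional): this follows since $\dim C_D(P_j)\ge2$, so once $y$ is fixed in the codimension-one hyperplane we can pick $x\in C_D(P_j)$ outside $\langle y\rangle$ and still off that hyperplane, after possibly adding a multiple of a hyperplane vector. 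This uses only dimension counting in $C_{\bar T_j}(P_j)$ together with the identification of its standard basis from Lemma~\ref{lem:B-elAb}\,\enref{B-elab-3}.
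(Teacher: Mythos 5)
Your proposal is correct and follows essentially the same route as the paper: transfer the hypotheses on $L+[P_j,\bar{T}_j]$ into statements about $C_L(P_j)$ inside $C_{\bar{T}_j}(P_j)$ via multiplication by the element $a_0$ of Lemma~\ref{lem:u0}, and then argue by the exchange lemma / dimension counting with respect to the basis $\Delta(\sigma_j)K,\ldots,\Delta(\sigma_{n-1})K$. The only cosmetic difference is that you route part~\enref{N2-1} through a normalized generating set $v_1,\ldots,v_r$ of $L$ rather than working directly with a single element of $C_L(P_j)$ having $\Delta(\sigma_j)K$ in its support, which is harmless.
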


\begin{proof}
Let $D$ be a complement of~$L$.
By Lemma~\ref{lem:u0} there is some $a_0 \in \f P_j$ such that $a_0 \bar{T}_j = C_{T_j}(P_j)$, and hence
\[
C_{\bar{T}_j}(P_j) = C_L(P_j) \oplus C_D(P_j) \, .
\]
Moreover, multiplication by $a_0$ induces an isomorphism
$\displaystyle\frac{\bar{T}_j}{[P_j,\bar{T}_j]} \xrightarrow[\cong]{\mu} C_{\bar{T}_j}(P_j)$ which restricts to isomorphisms
\[
\frac{L + [P_j,\bar{T}_j]}{[P_j,\bar{T}_j]} \xrightarrow{\cong} C_L(P_j) \quad \text{and} \quad \frac{D + [P_j,\bar{T}_j]}{[P_j,\bar{T}_j]} \xrightarrow{\cong} C_D(P_j) \, .
\]
\item \enref{N2-1}:
Recall from the proof of Lemma~\ref{lem:u3} that $Z$ is chosen using the exchange lemma: $Z$ is any subset of $\{\sigma_j,\ldots,\sigma_{n-1} \}$ such that $\{ \Delta(\sigma_i) K \mid \sigma_i \in Z \}$ is the basis of a complement to $C_L(P_j)$. Now, $\displaystyle\frac{\bar{T}_{j+1}+[P_j,\bar{T}_j]}{[P_j,\bar{T}_j]}$ is precisely the preimage under~$\mu$ of the subspace spanned by $\Delta(\sigma_{j+1})K,\ldots,\Delta(\sigma_{n-1})K$. So by assumption there is some $x \in L$ such that $a_0 x$ has $\Delta(\sigma_j)K$ in its support. Beginning the exchange lemma with~$x$, we can ensure that $\sigma_j \not \in Z$.
\item \enref{N2-2}: $C_L(P_j)$ has basis $\Delta(\sigma_{j+1})K,\ldots,\Delta(\sigma_{n-1})K$, hence $Z = \{\sigma_j\}$.
\item \enref{N2-3}: $C_L(P_j)$ is a proper subspace of the span of $\Delta(\sigma_{j+1})K,\ldots,\Delta(\sigma_{n-1})K$. The result follows, as $C_D(P_j)$ is a complement in $C_{\bar{T}_j}(P_j)$.
\end{proof}

\section{Complements and uniserial modules}
\label{sec:9}

\noindent
We recall some notation from Lemma~\ref{lem:B-elAb}: So $K = [T_j,T_j]$, and $\bar{U} = UK/K$.

\begin{lem}
\label{lem:j5}
Suppose that $N \trianglelefteq P_n$ has a complement~$C$. Set $D = T_j \cap C$, where $j$ is the depth of~$N$. Then
\begin{enumerate}
\item \label{enum:j5-1}
$D$ is elementary abelian, and $\bar{D} \cong D$.
\item \label{enum:j5-2}
$\bar{D}$ is an $\f P_j$ module, and $\bar{T}_j = \bar{N} \oplus \bar{D}$.
\end{enumerate}
\end{lem}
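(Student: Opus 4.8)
The strategy is to pass to the quotient $\bar P_n = P_n/K$ with $K = [T_j,T_j]$ and to use that there $\bar T_j = T_j/K$ becomes \emph{abelian}. To set up: since $T_j \trianglelefteq P_n$ the subgroup $K$ is characteristic in $T_j$, hence normal in $P_n$, and since $N$ has depth~$j$ Proposition~\ref{prop:j1} gives $K \leq N$. Thus $\bar N = N/K$ and $\bar T_j = T_j/K$ are normal subgroups of $\bar P_n$ with $\bar N \leq \bar T_j$; by Lemma~\ref{lem:B-elAb}, $\bar T_j$ is an elementary abelian $\f P_j$-module, the module action being conjugation in $\bar P_n$.

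Part~\enref{j5-1} is immediate: $K \leq N$ and $D \leq C$ force $D \cap K \leq C \cap N = 1$, so the canonical map $D \to \bar D = DK/K$ is an isomorphism, and $\bar D \leq \bar T_j$ is elementary abelian, hence so is $D$. For part~\enref{j5-2} I would first note that $\bar C$ is a complement to $\bar N$ in $\bar P_n$: if $cK = nK$ with $c \in C$, $n \in N$, then $cn^{-1} \in K \leq N$, so $c \in C \cap N = 1$, whence $\bar C \cap \bar N = 1$; and $\bar C\bar N = CNK/K = \bar P_n$. Since $\bar N \leq \bar T_j$ this already gives $\bar C\bar T_j = \bar P_n$. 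Because $K \leq T_j$ one checks $(C \cap T_j)K = CK \cap T_j$, so $\bar D = \bar C \cap \bar T_j$. The key step is that $\bar D \trianglelefteq \bar P_n$: it is normalized by $\bar C$ (as $\bar T_j \trianglelefteq \bar P_n$) and centralized by $\bar T_j$ (as $\bar T_j$ is abelian), and $\bar P_n = \bar C\bar T_j$. Hence $\bar D$, being normal in $\bar P_n$ and contained in $\bar T_j$, is an $\f P_j$-submodule of $\bar T_j$. Finally Dedekind's modular law gives $\bar T_j = \bar T_j \cap \bar N\bar C = \bar N(\bar T_j \cap \bar C) = \bar N\bar D$, while $\bar N \cap \bar D \leq \bar N \cap \bar C = 1$, so $\bar T_j = \bar N \oplus \bar D$ as $\f P_j$-modules.

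The only step that really needs thought is the $P_j$-invariance of $\bar D$: one cannot expect $D = T_j \cap C$ itself to be normalized by $P_j$ inside $P_n$ (complements of $N$ need not even be conjugate, let alone $P_j$-invariant), and the whole point of factoring out $K$ is to make $\bar T_j$ abelian, so that the decomposition $\bar P_n = \bar C\bar T_j$ forces $\bar C \cap \bar T_j$ to be normal. Everything else is routine bookkeeping with subgroup products, orders, and Dedekind's law.
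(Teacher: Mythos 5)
Your proof is correct and takes essentially the same route as the paper: pass to $\bar{T}_j = T_j/K$, note $K\le N$ forces $D\cap K=1$, and exploit that $\bar{T}_j$ is abelian to get the $P_j$-invariance of $\bar{D}$, which is exactly the point the paper also flags as the only delicate step. The lone cosmetic difference is that the paper checks invariance elementwise, writing $a=cn$ with $c\in C$, $n\in N$ and computing ${}^{a}d\in{}^{c}(dK)\subseteq DK$, whereas you package the same fact structurally as normality of $\bar{C}\cap\bar{T}_j$ in $\bar{P}_n=\bar{C}\bar{T}_j$ followed by Dedekind's law.
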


\begin{proof}
\enref{j5-1}: Since $D \leq T_j$, $D \cap N = 1$ and $K \leq N$, the map from $T_j$ to its abelianization $\bar{T}_j = T_j/K$ is injective on $D$. By Lemma~\ref{lem:B-elAb}, the abelianization is elementary abelian.
\item \enref{j5-2}:
$D$ is a group-theoretic complement of $N$ in~$T_j$, but it is conceivable that it is not normalized by~$P_j$. However, if $a \in P_j$ then $a = cn$ with $c \in C$ and $n \in N$, so for $d \in D$ we have ${}^a d = {}^c(d \cdot d^{-1}ndn^{-1}) \in {}^c (D K) = D K$. Hence ${}^a \bar{D} = \bar{D}$.
\end{proof}

\begin{lem}
\label{lem:j6}
Let $Q$ be a $p$-group and $P = Q \wr C_p$, so $P = B \rtimes C_p$ with $B = Q^p$. Suppose that $x \in P \setminus B$ and $y \in B \cap C_P(x)$. If $y^p = 1$ then $y \in P'$.
\end{lem}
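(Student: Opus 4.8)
The idea is to use the wreath product coordinates directly and track $x$-conjugacy. Write $P = Q \wr C_p = B \rtimes \langle \sigma \rangle$, where $\sigma$ cyclically permutes the $p$ copies $Q_0,\dots,Q_{p-1}$ of $Q$ in $B = Q^p$, so $\sigma (q_0,\dots,q_{p-1}) \sigma^{-1} = (q_{p-1},q_0,\dots,q_{p-2})$ (indices mod $p$). Since $x \notin B$, we have $x = b\sigma^k$ for some $b \in B$ and some $k$ with $1 \le k \le p-1$; as $p$ is prime, $\sigma^k$ is again a $p$-cycle on the coordinates, so after relabelling (replacing $\sigma$ by $\sigma^k$, which does not change $B$ or $P'$ or the hypotheses) we may assume $x = b\sigma$ with $b = (b_0,\dots,b_{p-1}) \in B$. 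First I would compute the conjugation action of $x$ on $B$: for $y = (y_0,\dots,y_{p-1}) \in B$ one gets $xyx^{-1} = (b_i y_{i-1} b_i^{-1})_{i}$, i.e. conjugation by $x$ shifts coordinates by one and twists the new $i$th coordinate by $b_i$.

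The key step is then to solve the fixed-point equation $xyx^{-1} = y$, i.e. $y_i = b_i y_{i-1} b_i^{-1}$ for all $i$ (indices mod $p$). This expresses each $y_i$ in terms of $y_0$: setting $c_i = b_i b_{i-1} \cdots b_1$ (with $c_0 = 1$), we get $y_i = c_i y_0 c_i^{-1}$ for $0 \le i \le p-1$, and the "wrap-around" consistency at $i = p$ forces $y_0 = c\, y_0\, c^{-1}$ with $c = b_p b_{p-1}\cdots b_1 = b_0 b_{p-1}\cdots b_1$ (the full product of the $b_i$ once around). So $y \in B \cap C_P(x)$ is completely determined by an element $y_0 \in Q$ commuting with this product $c$, with the remaining coordinates being the $c_i$-conjugates of $y_0$. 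The plan is now to show that any such $y$ with $y^p = 1$ lies in $P'$, by a direct computation of the image of $y$ in $P/P' = P^{\mathrm{ab}}$. Since $P = Q\wr C_p$, the abelianization map kills $[P,P]$, and in particular identifies all coordinates: the image of $y = (y_0,y_1,\dots,y_{p-1})$ in $P^{\mathrm{ab}}$ factors through the image of the product $y_0 y_1 \cdots y_{p-1}$ in $Q^{\mathrm{ab}} = Q/Q'$ together with the trivial $C_p$-component (as $y \in B$). Using $y_i \equiv y_0 \pmod{Q'}$ (each $y_i$ is conjugate to $y_0$), we get that the image of $y$ in $P^{\mathrm{ab}}$ is the image of $y_0^p$ in $Q/Q'$.

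Hence I must show $y_0^p \in Q'$. Here the hypothesis $y^p = 1$ enters. Compute $y^p$ in $B$ coordinate-wise: $y^p = (y_0^p, y_1^p, \dots, y_{p-1}^p)$ since $B$ is a direct product, and $y^p = 1$ forces $y_0^p = 1$ in $Q$ — so in fact $y_0^p = 1$, not merely $y_0^p \in Q'$, and therefore the image of $y$ in $P^{\mathrm{ab}}$ is trivial, i.e. $y \in P'$. So the argument actually shows more directly: $y^p = 1 \Rightarrow y_0^p = 1 \Rightarrow$ image of $y$ in $P/P'$ is the image of $y_0^p = 1$, whence $y \in P'$. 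The one point that needs care — and which I expect to be the only mild obstacle — is verifying cleanly that the abelianization of $y \in B$ inside $P = Q \wr C_p$ is computed by multiplying the coordinates and mapping into $Q/Q'$; this is the standard fact that $(Q \wr C_p)^{\mathrm{ab}} \cong Q^{\mathrm{ab}} \times C_p$, with the $Q^{\mathrm{ab}}$-component of an element of the base group $B = Q^p$ given by the product of its coordinates modulo $Q'$. Once that is in hand, everything else is the short explicit computation above, and the relabelling reduction from $\sigma^k$ to $\sigma$ uses only that $p$ is prime so that $\sigma^k$ generates $\langle\sigma\rangle$ and normalizes $B$.
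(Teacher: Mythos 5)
Your proposal is correct, but it is organized differently from the paper's argument, so a brief comparison is in order. The paper first treats the special case of abelian $Q$: there the centralizer condition forces $y=(q,\dots,q)$ to be diagonal with $q^p=1$, and membership in $P'$ is shown by exhibiting the explicit element $z=(1,q,q^2,\dots,q^{p-1})$ with ${}^{\sigma}z\cdot z^{-1}=y$; the general case is then deduced by passing to $P/B'\cong (Q/Q')\wr C_p$ and using $B'=(Q')^p\leq P'$. You instead work with general $Q$ from the start, solve the twisted fixed-point equation $y_i=b_iy_{i-1}b_i^{-1}$ to see that all coordinates of $y$ are $Q$-conjugate (hence congruent mod $Q'$), and then invoke the description of $(Q\wr C_p)^{\mathrm{ab}}\cong Q^{\mathrm{ab}}\times C_p$, in which the $Q^{\mathrm{ab}}$-component of a base-group element is the product of its coordinates mod $Q'$; together with $y_0^p=1$ this gives triviality in $P^{\mathrm{ab}}$, i.e.\ $y\in P'$. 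The computation with the $c_i$ is a nice touch that the paper avoids by first killing $Q'$, and your argument is arguably more transparent about \emph{why} the result holds. The one caveat is the ``standard fact'' you flag: the nontrivial direction (a base-group element whose coordinate product lies in $Q'$ belongs to $P'$) is exactly what the paper verifies by hand via the element $z$, which amounts to the identity that the kernel of the sum map on $(Q^{\mathrm{ab}})^p$ equals the image of $\sigma-1$. To make your proof self-contained you should include that one-line verification (or the explicit $z$), since it is the only place where an actual commutator is produced; everything else in your plan is sound, including the reduction from $\sigma^k$ to $\sigma$ using primality of $p$.
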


\begin{proof}
We have $x = (q_0,\ldots,q_{p-1})\sigma$, with $\sigma$ a $p$-cycle. Rearranging the factors in $B=Q^p$ if necessary, we may assume that $\sigma = (0 \; 1 \;2 \; \ldots \; p-2 \; p-1)$, so $\sigma(i) = i+1$ modulo~$p$.

\item \underline{Special case: $Q$ abelian}: Since $y$ lies in $B$ it has the form $y = (q'_0,\ldots,q'_{p-1})$. As $Q$ is abelian we have ${}^x y = {}^{\sigma} y = (q'_{p-1},q'_0,q'_1,\ldots,q'_{p-2})$; and so since $y \in C_P(x)$ it follows that $y = (q,q,\ldots,q)$ for some $q \in Q$. As $y^p=1$ we have $q^p=1$. Now set $z = (1,q,q^2,\ldots,q^{p-1}) \in Q^p$, then ${}^{\sigma} z = (q,q^2,\ldots,q^{p-1},1) = (q,q^2,\ldots,q^p) = yz$. So $y = {}^{\sigma} z \cdot z^{-1} \in P'$.

\item \underline{General case}: We have $B' = (Q')^p$, so $P/B' \cong (Q/Q') \wr C_p$ and $yB' \in (P/B')'$ by the special case. Hence $y \in P'$.
\end{proof}

\begin{lem}
\label{lem:Ncomp}
Suppose that $N \trianglelefteq P_n$ has depth~$j$. If $N$ has a complement in $P_n$, then $\bar{N} + [P_j,\bar{T}_j]$ is a not proper subgroup of $\bar{T}_{j+1} + [P_j,\bar{T}_j]$.
\end{lem}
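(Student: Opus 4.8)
The plan is to argue by contradiction. Suppose $N$ has a complement $C$ in $P_n$ and, contrary to the assertion, that $\bar N + [P_j,\bar T_j]$ is a \emph{proper} subgroup of $\bar T_{j+1} + [P_j,\bar T_j]$. Put $D := T_j \cap C$. By Lemma~\ref{lem:j5}, $D$ is elementary abelian, the canonical map $D \to \bar D$ is an isomorphism, and $\bar T_j = \bar N \oplus \bar D$ as $\f P_j$-modules. Since $T_j/T_{j+1} \cong A^j$ is abelian we have $K = [T_j,T_j] \leq T_{j+1}$, so there is an $\f P_j$-module surjection $\bar T_j \twoheadrightarrow \bar T_j/\bar T_{j+1} = A^j$; it carries $[P_j,\bar T_j]$ onto $[P_j,A^j]$ and $\bar T_{j+1}$ onto $0$, so the hypothesis $\bar N \subseteq \bar T_{j+1} + [P_j,\bar T_j]$ forces the image of $\bar N$ in $A^j$ to lie inside the proper submodule $[P_j,A^j]$. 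On the other hand $\bar T_j = \bar N + \bar D$ surjects onto $A^j$, so the image of $\bar D$ in $A^j$ is a submodule not contained in $[P_j,A^j]$; as $A^j$ is uniserial (Lemma~\ref{lem:ux1}), this forces the image of $D$ in $A^j = T_j/T_{j+1}$ to be \emph{all} of $A^j$. (If $j = n-1$ then $\bar T_j = A^{n-1}$ is indecomposable, so its only direct summands are $0$ and $A^{n-1}$; since $N \neq 1$ gives $\bar N \neq 0$, we get $\bar N = \bar T_j$ and the proper inclusion cannot hold. So from now on $j \leq n-2$, i.e.\ $n-j \geq 2$.)

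Next, our bad-case hypothesis is precisely the situation of Lemma~\ref{lem:N2}\enref{N2-3}, applied with the module complement $\bar D$ of $L = \bar N$. It yields a nonzero $y \in C_{\bar D}(P_j)$ whose support, in the basis $\Delta(\sigma_j)K,\dots,\Delta(\sigma_{n-1})K$ of $C_{\bar T_j}(P_j)$, avoids $\Delta(\sigma_j)K$; thus $y$ is a nonzero $\f$-combination of $\Delta(\sigma_{j+1})K,\dots,\Delta(\sigma_{n-1})K$. Lifting $y$ through the isomorphism $D \to \bar D$ gives $\hat y \in D$ with $\hat y \neq 1$. Each $\Delta(\sigma_i)$ with $i > j$ is a product of $P_j$-conjugates of $\sigma_i \in T_{j+1}$, and $T_{j+1} \trianglelefteq P_n$, so $\Delta(\sigma_i) \in T_{j+1}$; together with $K \leq T_{j+1}$ this gives $\hat y \in T_{j+1}$. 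Finally $\hat y^p = 1$, because $D$ is elementary abelian.

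Now bring in the wreath structure. By associativity $P_n \cong P_{n-j}\wr P_j$, so $T_j = F_1 \times \dots \times F_{p^j}$ with each $F_\ell \cong P_{n-j} \cong P_{n-j-1}\wr C_p$; write $B_\ell \cong (P_{n-j-1})^p$ for the base group of $F_\ell$. The base groups of the iterated wreath product nest compatibly, so $T_{j+1} = B_1 \times \dots \times B_{p^j}$, the quotient $T_j \to T_j/T_{j+1} = A^j$ is the coordinatewise one, and $K = [T_j,T_j] = F_1' \times \dots \times F_{p^j}'$. Write $\hat y = (h_1,\dots,h_{p^j})$; then $h_\ell \in B_\ell$ and $h_\ell^{\,p} = 1$. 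Fix $\ell_0 \in \{1,\dots,p^j\}$. By the first paragraph the image of $D$ in $A^j$ is everything, so we may choose $d \in D$ (depending on $\ell_0$) whose $\ell_0$-th component $d_{\ell_0} \in F_{\ell_0}$ lies outside $B_{\ell_0}$. Since $D$ is abelian, $h_{\ell_0}$ commutes with $d_{\ell_0}$; Lemma~\ref{lem:j6}, applied inside $F_{\ell_0} \cong P_{n-j-1}\wr C_p$ with $x = d_{\ell_0}$ and $y = h_{\ell_0}$, gives $h_{\ell_0} \in F_{\ell_0}'$. As $\ell_0$ was arbitrary, $\hat y \in F_1' \times \dots \times F_{p^j}' = K$. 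But $K \leq N$ by Proposition~\ref{prop:j1} and $D \cap N = 1$, so $\hat y = 1$, contradicting $\hat y \neq 1$. This contradiction proves the lemma.

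I expect the crux to be conceptual rather than computational. The $\f P_j$-module splitting $\bar T_j = \bar N \oplus \bar D$ is necessary but not sufficient for $N$ to be complemented in $P_n$: the remaining obstruction lives entirely in the group extension $T_{j+1} \lneq T_j$, whose building blocks are the wreath products $P_{n-j-1}\wr C_p$, and Lemma~\ref{lem:j6} is exactly the device that converts ``commutes with an element outside the base group and has order $p$'' into ``lies in the derived subgroup''. The only genuine care is in checking (i) that the element $\hat y$ delivered by Lemma~\ref{lem:N2}\enref{N2-3} really lies in $T_{j+1}$, so that each $h_\ell \in B_\ell$, and (ii) that surjectivity of $D \to A^j$ supplies, for each coordinate $\ell_0$, an element of $D$ with $\ell_0$-th component outside $B_{\ell_0}$ to feed into Lemma~\ref{lem:j6}.
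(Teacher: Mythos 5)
Your proof is correct and follows essentially the same route as the paper: Lemma~\ref{lem:j5} plus Lemma~\ref{lem:N2}\,\enref{N2-3} produce a nontrivial order-$p$ element of $D\cap T_{j+1}$, and then Lemma~\ref{lem:j6} applied componentwise in $T_j\cong (P_{n-j})^{p^j}$ forces it into $K\leq N$, contradicting $D\cap N=1$. The only (harmless) variation is that the paper feeds Lemma~\ref{lem:j6} with the single element $x$ from Lemma~\ref{lem:N2}\,\enref{N2-3} whose support contains $\Delta(\sigma_j)K$, so that every component of $x$ lies outside the relevant base group at once, whereas you choose a separate witness $d\in D$ for each coordinate via surjectivity of $D\to A^j$.
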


\begin{proof}
Call the complement $C$, and set $D = C \cap T_j$. Lemma~\ref{lem:j5} says that the $\f P_j$-module $\bar{D}$ is a complement of $\bar{N}$ in $\bar{T}_j$. If $\bar{N} + [P_j,\bar{T}_j]$ is a proper subgroup of $\bar{T}_{j+1} + [P_j,\bar{T}_j]$ then Lemma~\ref{lem:N2} says that there are $x,y \in D$ such that the suport of $xK \in C_{\bar{D}}(P_j)$ contains $\Delta(\sigma_j)K$, and $yK$ is a nontrivial element of $C_{\bar{T}_{j+1}}(P_j)$. 
\item
Lemma~\ref{lem:j5} says that $\langle x,y \rangle$
is elementary abelian.
On the other hand, $x,y \in T_j \cong (P_{n-j})^{p^j}$.
Let $x_i,y_i \in P_{n-j}$ be the images of $x,y$ in the $i$th of 
these $p^j$ factors, then $\langle x_i,y_i \rangle$ is 
elementary abelian too. Moreover, our choice of~$x$ means that each 
$x_i$ lies outside the base subgroup $(P_{n-j-1})^p$ of $P_{n-j} = P_{n-j-1} \wr C_p$; but $y_i$ does lie in this base group, since $y \in T_{j+1}$. 
So $y_i \in P_{n-j}' \leq K \leq N$ by Lemma~\ref{lem:j6} and Proposition~\ref{prop:j1}\@. 
As $y$ is the product of the $y_i$, we have $y \in N \cap D = 1$: a contradiction.
\end{proof}

\begin{lem}
\label{lem:notSmall}
Suppose that $N \trianglelefteq P_n$ has depth~$j$. Then
\begin{enumerate}
\item \label{enum:notSmall-1}
If $K \leq L \leq T_j$, then $LP'_n/P'_n \cong \displaystyle\frac{\bar{L} + [P_j,\bar{T}_j]}{[P_j,\bar{T}_j]}$.
\item \label{enum:notSmall-2}
$NP'_n/P'_n = T_{j_+1}P'_n/P'_n$ if and only if $\bar{N} + [P_j,\bar{T}_j] = \bar{T}_{j+1} + [P_j,\bar{T}_j]$.
\item \label{enum:notSmall-3}
$NP'_n/P'_n$ is a proper subgroup of $T_{j_+1}P'_n/P'_n$ if and only if $\bar{N} + [P_j,\bar{T}_j]$ is a proper subgroup of $\bar{T}_{j+1} + [P_j,\bar{T}_j]$.
\end{enumerate}
\end{lem}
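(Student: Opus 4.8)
The plan is to prove the three statements in order, using \enref{notSmall-1} as the workhorse and deducing the other two as immediate consequences. The key observation is that passing to $P_n/P_n'$ turns everything into an $\f$-vector space computation, and that $K = [T_j,T_j] \leq P_n'$ while $[P_j, T_j] \leq P_n'$ as well (since $[P_j,T_j]$ consists of commutators in $P_n$), so both subspaces appearing on the right-hand side of \enref{notSmall-1} make sense inside $\bar T_j = T_j/K$ and map into $P_n/P_n'$.

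For \enref{notSmall-1}, the plan is as follows. Fix $L$ with $K \leq L \leq T_j$. Since $P_n = P_j T_j$ with $T_j \trianglelefteq P_n$, the commutator subgroup satisfies $P_n' = P_j' [P_j,T_j] T_j'$, and so $P_n' \cap T_j = [P_j,T_j] T_j' = [P_j,T_j]K$ — I would check this containment carefully, as it is the crux: ``$\supseteq$'' is clear, and ``$\subseteq$'' follows because modulo $[P_j,T_j]K$ the group $P_n$ becomes $(P_j/P_j') \times (T_j/[P_j,T_j]K)$, an abelian group in which the image of $T_j$ is a direct factor, so anything in $P_n' \cap T_j$ dies. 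Granting this, for $K \leq L \leq T_j$ we get
\[
\frac{LP_n'}{P_n'} \cong \frac{L}{L \cap P_n'} = \frac{L}{L \cap P_n' \cap T_j} = \frac{L}{L \cap [P_j,T_j]K} \, ,
\]
using $L \leq T_j$ in the middle equality. Now $L \cap [P_j,T_j]K \supseteq K$, and since $[P_j,T_j]K/K = [P_j,\bar T_j]$ inside $\bar T_j$, reducing mod $K$ gives $L \cap [P_j,T_j]K = $ preimage of $\bar L \cap [P_j,\bar T_j]$, whence $L/(L\cap[P_j,T_j]K) \cong \bar L/(\bar L \cap [P_j,\bar T_j]) \cong (\bar L + [P_j,\bar T_j])/[P_j,\bar T_j]$ by the second isomorphism theorem. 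That is exactly \enref{notSmall-1}.

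For \enref{notSmall-2} and \enref{notSmall-3}, the plan is to apply \enref{notSmall-1} to $L = N$ (legitimate since $K = [T_j,T_j] \leq N$ by Proposition~\ref{prop:j1}, as $N$ has depth $j$) and to $L = T_{j+1}$ (legitimate since $K \leq T_{j+1} \leq T_j$). Both $NP_n'/P_n'$ and $T_{j+1}P_n'/P_n'$ are then identified with subspaces of $\bar T_j/[P_j,\bar T_j]$, namely the images of $\bar N + [P_j,\bar T_j]$ and $\bar T_{j+1}+[P_j,\bar T_j]$. Moreover $N \leq T_j$ forces $\bar N \subseteq \bar T_j$ and, since $N$ has depth $j$ (so $N \not\leq T_{j+1}$... actually one only needs $N \subseteq T_j$ here), while $\bar T_{j+1} \subseteq \bar T_j$ always; and crucially $\bar N + [P_j,\bar T_j] \subseteq \bar T_{j+1} + [P_j,\bar T_j]$ is \emph{not} automatic — but it need not be assumed, because \enref{notSmall-2} and \enref{notSmall-3} are stated as two-way implications about equality resp.\ proper containment of the two subspaces, and a vector-space inclusion/isomorphism argument turns each group-theoretic statement into the corresponding subspace statement via \enref{notSmall-1}. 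Concretely: $NP_n'/P_n' = T_{j+1}P_n'/P_n'$ iff the two subspaces of $\bar T_j/[P_j,\bar T_j]$ coincide iff $\bar N + [P_j,\bar T_j] = \bar T_{j+1}+[P_j,\bar T_j]$, giving \enref{notSmall-2}; and proper containment transfers the same way for \enref{notSmall-3}.

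The main obstacle is establishing the identity $P_n' \cap T_j = [P_j,T_j]K$ (equivalently, that $T_j/[P_j,T_j]K$ is a direct factor of the abelianization $P_n/P_n'$). Everything else is bookkeeping with the isomorphism theorems. I expect the cleanest route is: the map $P_n = P_j T_j \to (P_j/P_j') \times (T_j/[P_j,T_j]K)$ sending $at \mapsto (aP_j', t\,[P_j,T_j]K)$ for $a \in P_j$, $t \in T_j$ is a well-defined homomorphism (well-definedness uses $T_j \trianglelefteq P_n$ and that conjugation by $P_j$ acts trivially on $T_j/[P_j,T_j]$; the multiplicativity uses $[P_j,T_j] \leq \ker$ in the second coordinate), its target is abelian, and it is visibly surjective with kernel contained in... this pins down $P_n' \supseteq \ker \supseteq$ the relevant subgroup, and intersecting with $T_j$ finishes it.
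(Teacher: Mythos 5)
Your argument is correct and essentially the same as the paper's: both rest on the crux identity $P_n' \cap T_j = K\,[P_j,T_j]$ coming from the decomposition $P_n = T_j \rtimes P_j$, followed by isomorphism-theorem bookkeeping for part (1) and specialization to $L=N$ and $L=T_{j+1}$ for parts (2) and (3). The paper obtains the crux identity in one line from $P_n' = K\,[P_j,T_j]\,P_j'$ together with $P_j \cap T_j = 1$, while you verify it via the homomorphism onto $(P_j/P_j') \times \bigl(T_j/[P_j,T_j]K\bigr)$ -- the same computation in different packaging (just note that the inclusion you need from the abelian target is $\ker \supseteq P_n'$, and in fact the kernel equals $P_n'$, which intersected with $T_j$ gives exactly $[P_j,T_j]K$).
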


\begin{proof}
By Proposition~\ref{prop:j1}, $K = [T_j,T_j]$, satisfies $K \leq N \cap T_{j+1}$.
Since $P_n = T_j . P_j$ we have $P'_n = K . [P_j,T_j]. P'_j$ and therefore
\[
P'_n \cap T_j = K . [P_j,T_j] \, .
\]
\enref{notSmall-1}:
$L \cap P'_n = L \cap (P'_n \cap T_j)$, and so $LP'_n/P'_n \cong L(P'_n \cap T_j)/(P'_n \cap T_j)$. The result follows, since $L(P'_n \cap T_j) = LK [P_j,T_j] = L [P_j,T_j]$.
\item \enref{notSmall-2}~and \enref{notSmall-3}: Follow by applying~\enref{notSmall-1} to the cases $L=N$ and $L=T_{j+1}$\@.
\end{proof}

\section{The permutations \texorpdfstring{$\rho_i$}{ρ\_i} and Theorem~\ref{thm:MaschkeSpn}}
\label{sec:rho}

\begin{notn}
Now suppose that $1 \leq i \leq n-1$. Observe that $\langle \sigma_{i-1}, \sigma_i \rangle \cong P_2$, whose centre is cyclic of order~$p$ and generated by $\prod_{s=0}^{p-1} {}^{\sigma_{i-1}^s} \sigma_i$. Set
\[
\rho_i = \prod_{s = 1}^{p-1} {}^{\sigma_{i-1}^s} \sigma_i \, ,
\]
so $\sigma_i \rho_i$ generates the centre of $\langle \sigma_{i-1},\sigma_i \rangle$.
\end{notn}

\begin{lem}
\label{lem:rho}
\begin{enumerate}
\item \label{enum:rho-1}
$\rho_i^p = 1$.
\item \label{enum:rho-2}
$\rho_i [T_j,T_j] = \sigma_i^{-1} [T_j,T_j] \; \forall \, i > j$.
\item \label{enum:rho-3}
Every $P_j$-conjugate of $\rho_i$ commutes with every $P_j$-conjugate of~$\rho_k$, for all $i,k > j$.
\item \label{enum:rho-4}
${}^{\eta_k}\rho_k = \rho_k^r$, and ${}^{\eta_k}\rho_i = \rho_i$ for all $i \neq k$.
\end{enumerate}
\end{lem}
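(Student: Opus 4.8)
The plan is to prove the four statements of Lemma~\ref{lem:rho} more or less in order, exploiting the fact that $\rho_i$ is a product of $P_{i-1}$-conjugates of $\sigma_i$, all of which commute with one another by Lemma~\ref{lem:Pn-conjs} (applied inside $\langle\sigma_0,\ldots,\sigma_i\rangle\cong P_{i+1}$, whose base group $A^i$ contains the $P_i$-conjugates, and a fortiori the $P_{i-1}$-conjugates, of $\sigma_i$).

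First, \enref{rho-1}: since $\rho_i=\prod_{s=1}^{p-1}{}^{\sigma_{i-1}^s}\sigma_i$ is a product of pairwise commuting elements each of order $p$, we get $\rho_i^p=\prod_s ({}^{\sigma_{i-1}^s}\sigma_i)^p=1$. Next, \enref{rho-2}: work modulo $K:=[T_j,T_j]$ for a fixed $j<i$. By Proposition~\ref{prop:j1} together with Lemma~\ref{lem:B-elAb}, $\bar T_j=T_j/K$ is an $\f P_j$-module in which $\sigma_i K$ generates a copy of the uniserial module $A^j$; in particular $\sigma_i\rho_i K = \prod_{s=0}^{p-1}{}^{\sigma_{i-1}^s}\sigma_i\, K$ is the sum over a full $\langle\sigma_{i-1}\rangle$-orbit of the basis vector $\sigma_iK$. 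Since $\sigma_{i-1}\in P_j$ only when $i-1<j$, and here $i-1\ge j$, I need to be a little careful: the relevant statement is that the element $\Delta := \prod_{s=0}^{p-1}{}^{\sigma_{i-1}^s}\sigma_i$ generates the centre of $\langle\sigma_{i-1},\sigma_i\rangle\cong P_2$, hence lies in $[T_j,T_j]=K$ by Proposition~\ref{prop:j1} provided $\langle\sigma_{i-1},\sigma_i\rangle'$ is seen to sit inside $K$ — actually the cleaner route is to observe $\Delta\in A^i$ and $\Delta$ is fixed by $\sigma_{i-1}$, and to check directly that for $j<i$ one has $\Delta\in K$ because $\Delta=[\,\cdot\,,\,\cdot\,]$-type element; more precisely $\sigma_i\rho_i$ is the product of all $p$ conjugates $\{{}^{\sigma_{i-1}^s}\sigma_i\}$, and the difference $\sigma_i\rho_i\cdot\sigma_i^{-1}=\rho_i$ together with the relation ``full orbit sum lies in $[P_j, A^{i}\cap T_j]\le K$'' gives $\rho_i K=\sigma_i^{-1}K$. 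I would phrase this last point as: in the uniserial $\f\langle\sigma_{i-1}\rangle$-module spanned by the $p$ conjugates of $\sigma_i$, the all-ones vector $\sigma_i\rho_i$ is fixed, and the fixed submodule is exactly the bottom layer, which lies in $[T_j,T_j]$; hence $\sigma_i\rho_i\equiv 1$, i.e.\ $\rho_i\equiv\sigma_i^{-1}$, modulo $K$.

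For \enref{rho-3}: a $P_j$-conjugate of $\rho_i$ is a product of $P_{i-1}$-conjugates of $\sigma_i$, hence lies in $A^i$; similarly a $P_j$-conjugate of $\rho_k$ lies in $A^k$. If $i=k$ both lie in the elementary abelian $A^i$ and commute. If, say, $i<k$, then $A^i=T_{n-i}^{\,?}$ — more usefully, $A^i$ is a subgroup of $P_{i+1}\le P_k$, and every $P_{k-1}$-conjugate of $\sigma_k$ lies in the base group of $P_k=P_{k-1}\wr C_p$, which centralizes $P_{k-1}\supseteq A^i$; so the $P_j$-conjugate of $\rho_k$ (a product of such base-group elements) centralizes the $P_j$-conjugate of $\rho_i$. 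Finally \enref{rho-4}: by Lemma~\ref{lem:Covello}, ${}^{\eta_k}\sigma_k=\sigma_k^r$ and ${}^{\eta_k}\sigma_i=\sigma_i$ for $i\neq k$; moreover $\eta_k$ commutes with $\sigma_{k-1}$ and with $\sigma_{i-1}$ for $i\ne k$, so conjugation by $\eta_k$ permutes the factors ${}^{\sigma_{k-1}^s}\sigma_k$ of $\rho_k$ by sending ${}^{\sigma_{k-1}^s}\sigma_k\mapsto {}^{\sigma_{k-1}^s}\sigma_k^r=({}^{\sigma_{k-1}^s}\sigma_k)^r$, giving ${}^{\eta_k}\rho_k=\rho_k^r$; and it fixes each factor of $\rho_i$ for $i\ne k$, giving ${}^{\eta_k}\rho_i=\rho_i$. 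Wait — I must double-check that $\eta_k$ normalizes $\langle\sigma_{k-1}\rangle$ trivially, i.e.\ commutes with $\sigma_{k-1}$: this is exactly the $j\neq i$ case of the formula ${}^{\eta_j}\sigma_i=\sigma_i$ from the proof of Lemma~\ref{lem:Covello}, applied with $j=k$, $i=k-1$.

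The main obstacle is part~\enref{rho-2}: making the claim ``$\rho_i\equiv\sigma_i^{-1}\pmod{[T_j,T_j]}$'' precise and rigorous. The cleanest argument I can see is to note that $c_i:=\sigma_i\rho_i=\prod_{s=0}^{p-1}{}^{\sigma_{i-1}^s}\sigma_i$ generates $Z(\langle\sigma_{i-1},\sigma_i\rangle)$, and that this $P_2\cong\langle\sigma_{i-1},\sigma_i\rangle$ sits inside $T_j$ whenever $i-1\ge j$ (since then both generators lie in $T_j$); hence $c_i\in Z(\langle\sigma_{i-1},\sigma_i\rangle)\le \langle\sigma_{i-1},\sigma_i\rangle'\cdot? $ — no, the centre need not be in the derived subgroup. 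Instead: $c_i$ is the ``diagonal'' element $\Delta(\sigma_i)$ of the base group $A^i\cong C_p^{p^i}$ with respect to the $\langle\sigma_{i-1}\rangle$-action, and $A^i\cap T_j$ is a $P_j$-submodule on which — via the uniserial structure of $\bar A^{\,?}$ and Remark~\ref{rem:uniserial-2} — the diagonal $\Delta(\sigma_i)$ is congruent to $0$ modulo $[P_j,\cdot]\le[T_j,T_j]=K$ precisely because the all-ones vector lies in the bottom of the uniserial filtration, which is $[P_j,-]^{\text{top}}\subseteq[T_j,T_j]$. I would isolate this as: modulo $K$, the set $\{{}^{\sigma_{i-1}^s}\sigma_i K: 0\le s\le p-1\}$ spans a subspace of $\bar T_j$ isomorphic to the regular $\f\langle\sigma_{i-1}\rangle$-module, whose fixed points (here $c_iK$) coincide with $[\langle\sigma_{i-1}\rangle,-]^{p-1}$ applied to it, and the latter lies in $[P_j,\bar T_j]\cap$(socle)$\subseteq K/K=0$ — giving $c_iK=0$, hence $\rho_iK=\sigma_i^{-1}K$. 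Everything else is bookkeeping with the iterated wreath product structure.
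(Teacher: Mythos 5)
Part \enref{rho-1} is fine and agrees with the paper. The trouble starts with part \enref{rho-2}: your conclusion is correct, but the justifications offered for it are faulty. The key point, which you never state and which is essentially the paper's whole proof, is that for $i>j$ both $\sigma_{i-1}$ and $\sigma_i$ lie in $T_j$, so conjugation by $\sigma_{i-1}$ is trivial modulo $K=[T_j,T_j]$: each factor ${}^{\sigma_{i-1}^s}\sigma_i$ is congruent to $\sigma_i$ mod $K$, whence $\rho_i\equiv\sigma_i^{p-1}=\sigma_i^{-1}$. Your argument asserts the opposite: the classes ${}^{\sigma_{i-1}^s}\sigma_iK$ do \emph{not} span a regular $\f\langle\sigma_{i-1}\rangle$-module in $\bar T_j$ --- they all coincide with $\sigma_iK$ --- so the fixed-point/socle reasoning has nothing to act on, and the step ``$[P_j,\bar T_j]\cap(\text{socle})\subseteq K/K=0$'' is unfounded, since $[P_j,\bar T_j]\neq0$ (Lemma~\ref{lem:notSmall} depends on exactly this) and $\sigma_{i-1}\notin P_j$. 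The auxiliary containment ``$[P_j,A^i\cap T_j]\le K$'' is also false: for $p=3$, $n=3$, $j=1$, $i=2$ the commutator $[\sigma_0,\sigma_2]={}^{\sigma_0}\sigma_2\cdot\sigma_2^{-1}$ lies in $[P_1,A^2]$, but its components in the two relevant $P_2$-factors of $T_1$ are single conjugates of $\sigma_2^{\pm1}$, which have nonzero exponent sum and so do not lie in $P_2'$; hence $[\sigma_0,\sigma_2]\notin[T_1,T_1]$. (The route you abandoned was in fact viable: for $P_2=C_p\wr C_p$ the centre does lie in the derived subgroup, so $\sigma_i\rho_i\in\langle\sigma_{i-1},\sigma_i\rangle'\le K$; but you did not establish this.)

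Part \enref{rho-3} has a genuine gap in the case $i\neq k$: the base group of a wreath product is normalized, not centralized, by the top group, and conjugates of $\sigma_k$ certainly do not centralize $A^i$ for $i<k$ (already ${}^{\sigma_1}\sigma_2=(3\;4\;5)\neq\sigma_2$ for $p=3$). Your argument nowhere uses the defining feature of $\rho_i$, namely the omission of the $s=0$ factor; taken at face value it would equally ``prove'' that $\sigma_i$ commutes with $\rho_k$ for $i<k$, which is false. The paper's proof is a support computation: for $\pi\in P_j$, ${}^{\pi}\rho_i$ moves only points whose first $j$ coordinates agree with $\pi(0,\ldots,0)$, with $\lambda_j=\cdots=\lambda_{i-2}=0$ and $\lambda_{i-1}\neq0$, and there it merely increments $\lambda_i$; for $i<k$ the support of ${}^{\pi'}\rho_k$ requires $\lambda_{i-1}=0$, so the supports are disjoint and each permutation preserves its own support, hence they commute. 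Part \enref{rho-4} is essentially right, except that for $i=k+1$ your claim that $\eta_k$ fixes each factor of $\rho_i$ is false: conjugation sends ${}^{\sigma_k^s}\sigma_{k+1}$ to ${}^{\sigma_k^{rs}}\sigma_{k+1}$, i.e.\ it permutes the commuting factors among themselves, which still fixes the product. A small further slip: $\rho_i$ is a product of $\langle\sigma_{i-1}\rangle$-conjugates, hence $P_i$-conjugates, of $\sigma_i$; these are not $P_{i-1}$-conjugates, since $\sigma_{i-1}\notin P_{i-1}$.
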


\begin{ex}
If $p^n=3^3$ then
\begin{align*}
\rho_1 & = (9 \; 12 \; 15)(10\; 13\;16)(11\;14\;17)(18\;21\;24)(19\;22\;25)(20\;23\;26) \\
\rho_2 & = (3\;4\;5)(6\;7\;8) \, .
\end{align*}
\end{ex}

\begin{proof}
\enref{rho-1}: The $\sigma_{i-1}$-conjugates of $\sigma_i$ commute with each other, and each has exponent~$p$.
\quad \enref{rho-2}: $\sigma_i^p = 1$, and for $i > j$ have ${}^{\sigma_{i-1}^k} \sigma_i \in \sigma_i [T_j,T_j]$.
\item \enref{rho-3}: Let $i > j$ and $\pi \in P_j$. Then ${}^{\pi}\rho_i$ only alters $(\lambda_0,\ldots,\lambda_{n-1})$ if $\lambda_{i-1} \neq 0$; $\lambda_k = 0$ for $j \leq k < i-1$; and $(\lambda_0,\ldots,\lambda_{j-1}) = \pi(0,\ldots,0)$. If these conditions hold, then the value of $\lambda_i$ is increased by~$1$. Any two such permutations commute with each other.
\quad \enref{rho-4}: By inspection.
\end{proof}

\begin{prop}
\label{prop:ExCompl}
Suppose that $N \trianglelefteq P_n$. Set $j := \operatorname{depth}(N)$. Then $K := T'_j \leq N$, and the following statements are equivalent:
\begin{enumerate}
\item \label{enum:ExCompl-1}
$N$ has a complement in~$P_n$.
\item \label{enum:ExCompl-2}
$N$ has an $H$-invariant complement in $P_n$.
\item \label{enum:ExCompl-3}
$N/K$ is a direct summand of the $\f P_j$-submodule $T_j/K$, and $NP_n'/P'_n$ is not a proper subgroup of $T_{j+1}P'_n/P'_n$.
\end{enumerate}
\end{prop}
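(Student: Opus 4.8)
The plan is to establish the cycle of implications $(2)\Rightarrow(1)\Rightarrow(3)\Rightarrow(2)$; the relation $K=T'_j\le N$ is exactly Proposition~\ref{prop:j1}, and $(2)\Rightarrow(1)$ is immediate. For $(1)\Rightarrow(3)$, let $C$ be a complement of $N$ in $P_n$ and put $D=C\cap T_j$. Lemma~\ref{lem:j5} then gives an $\f P_j$-module decomposition $\bar T_j=\bar N\oplus\bar D$, so $N/K=\bar N$ is a direct summand of $T_j/K=\bar T_j$. For the second half of~(3), Lemma~\ref{lem:Ncomp} says that $\bar N+[P_j,\bar T_j]$ is not a proper subgroup of $\bar T_{j+1}+[P_j,\bar T_j]$, and Lemma~\ref{lem:notSmall}\,\enref{notSmall-3} rephrases this as the assertion that $NP'_n/P'_n$ is not a proper subgroup of $T_{j+1}P'_n/P'_n$.

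The real content is $(3)\Rightarrow(2)$. By Lemma~\ref{lem:B-elAb}\,\enref{B-elab-4}, applied to the direct summand $\bar N$, there is a set $Z\subseteq\{\sigma_j,\ldots,\sigma_{n-1}\}$ with $\bar T_j=\bar N\oplus M_Z$. First I would pin down $Z$: feeding the second hypothesis of~(3) (translated back by Lemma~\ref{lem:notSmall}\,\enref{notSmall-3}) into the trichotomy of Lemma~\ref{lem:N2} for $L=\bar N$, we see that case~\enref{N2-3} is excluded, that in case~\enref{N2-1} the set $Z$ may be chosen with $\sigma_j\notin Z$, and that in case~\enref{N2-2} necessarily $Z=\{\sigma_j\}$. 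So we may assume that either $\sigma_j\notin Z$, hence $Z\subseteq\{\sigma_{j+1},\ldots,\sigma_{n-1}\}$, or $Z=\{\sigma_j\}$.

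Next I would lift $M_Z$ to a subgroup $R\le T_j$. If $\sigma_j\notin Z$, let $R$ be generated by all $P_j$-conjugates of the permutations $\rho_i$ with $\sigma_i\in Z$; since every such $i$ satisfies $i>j$, Lemma~\ref{lem:rho} applies and shows that $R$ is elementary abelian (parts~\enref{rho-1} and~\enref{rho-3}), that $R$ is $H$-invariant (part~\enref{rho-4}, as each $\eta_k$ normalises $P_j$ and carries $\rho_i$ to a power of itself), and that $\bar R=M_Z$ (part~\enref{rho-2} gives $\rho_iK=\sigma_i^{-1}K$, so $\bar R$ is the span of the $P_j$-conjugates of the $\sigma_iK$). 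If instead $Z=\{\sigma_j\}$, take $R=A^j$, the base group of $P_{j+1}=\langle\sigma_0,\ldots,\sigma_j\rangle$; it lies in $T_j$ and equals $P_{j+1}\cap T_j$, so it is $H$-invariant as an intersection of $H$-invariant subgroups, and $\bar R=M_Z$ because $M_Z$ is the span of the $P_j$-conjugates of $\sigma_jK$. In both cases the crucial point is that $\abs{R}=\abs{M_Z}$, equivalently $R\cap K=1$. The inequality $\abs{R}\ge\abs{\bar R}=\abs{M_Z}$ is automatic; for the reverse, in the first case $R$ is generated by at most $\abs{Z}\,p^j$ pairwise commuting elements of order~$p$, since each $\rho_i$ has exactly $p^j$ distinct $P_j$-conjugates (their supports, described in the proof of Lemma~\ref{lem:rho}\,\enref{rho-3}, depend only on the value $\pi(0,\ldots,0)$), while $\abs{M_Z}=p^{\abs{Z}p^j}$ because each summand $\langle\sigma_iK\rangle$ is isomorphic to $A^j$ and hence has length~$p^j$; in the second case $\abs{R}=\abs{A^j}=p^{p^j}=\abs{M_Z}$ directly.

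Finally I would check that $C:=P_jR$ is the desired complement. Since $P_j$ normalises $R$, $C=RP_j$ is a subgroup, and it is $H$-invariant as a product of $H$-invariant subgroups. As $R\le T_j$ and $P_j\cap T_j=1$ in the semidirect decomposition $P_n=T_j\rtimes P_j$, Dedekind's modular law gives $C\cap T_j=R$; and since $N\le T_j$, this yields $C\cap N=R\cap N$, which is trivial because any $g\in R\cap N$ has $gK\in M_Z\cap\bar N=0$, so $g\in R\cap K=1$. Counting orders (and using $R\cap P_j\le T_j\cap P_j=1$), $\abs{C}=\abs{R}\,\abs{P_j}=\abs{M_Z}\,\abs{P_j}=(\abs{T_j}/\abs{N})\,\abs{P_j}=\abs{P_n}/\abs{N}$, where $\abs{T_j}/\abs{N}=\abs{\bar T_j}/\abs{\bar N}=\abs{M_Z}$ by $K\le N$ and $\bar T_j=\bar N\oplus M_Z$. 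Hence $C$ complements $N$, and $(3)\Rightarrow(2)$ follows. The main obstacle is this last implication, and within it the verification that $R\cap K=1$ --- that the obvious lift of the module complement $M_Z$ genuinely spans a group of the expected order --- which is precisely where the split into the two cases and the count of $P_j$-conjugates are needed; the remainder is bookkeeping with the lemmas already proved.
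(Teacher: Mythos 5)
Your proposal is correct and follows essentially the same route as the paper: (1)$\Rightarrow$(3) via Lemmas~\ref{lem:j5}, \ref{lem:Ncomp} and~\ref{lem:notSmall}, and (3)$\Rightarrow$(2) by excluding case~\enref{N2-3} of Lemma~\ref{lem:N2} and then lifting $M_Z$ to the subgroup generated by the $P_j$-conjugates of the $\rho_i$ (resp.\ of $\sigma_j$, i.e.\ $A^j$) and adjoining $P_j$. Your extra bookkeeping --- the count showing $\abs{R}=\abs{M_Z}$ hence $R\cap K=1$, the Dedekind argument, and the order count verifying $CN=P_n$ --- only makes explicit steps the paper leaves implicit.
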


\begin{rem}
As $T_j/K$ is a direct sum of copies of the length $p^j$ uniserial module $A^j$, one may use the equivalent conditions of Lemma~\ref{lem:u3} in order to determine whether $N/K$ is a direct summand.
\end{rem}

\begin{proof}
Proposition~\ref{prop:j1} tells us that $K := [T_j,T_j] \leq N$.
The implication \enref{ExCompl-2}${}\Rightarrow{}$\enref{ExCompl-1} is clear.
As in Lemma~\ref{lem:B-elAb} we write $\bar{U} = UK/K$.

\item \underline{\enref{ExCompl-1}${}\Rightarrow{}$\enref{ExCompl-3}}:
Lemma~\ref{lem:j5} says that $\bar{N}$ is a direct summand of $\bar{T}_j$, and Lemma~\ref{lem:Ncomp} says that $\bar{N} + [P_j,\bar{T}_j]$ is a not proper subgroup of $\bar{T}_{j+1} + [P_j,\bar{T}_j]$. So $NP'_n/P'_n$ is not a proper subgroup of $T_{j_+1}P'_n/P'_n$ by Lemma~\ref{lem:notSmall}\@.

\item \underline{\enref{ExCompl-3}${}\Rightarrow{}$\enref{ExCompl-2}}:
By Lemma~\ref{lem:notSmall}, $\bar{N} + [P_j,\bar{T}_j]$ is a not proper subgroup of $\bar{T}_{j+1} + [P_j,\bar{T}_j]$. So we are in one of the first two cases of Lemma~\ref{lem:N2}\@.

Suppose case~\enref{N2-1} applies. Let $D \leq T_j$ be the subgroup generated by all $P_j$-conjugates of the $\rho_i$ for which $\sigma_i \in Z$. Lemma~\ref{lem:rho} says that $D$ is elementary abelian, and by construction it is normalized by~$P_j$. Moreover, the formula for ${}^{\eta_j} \sigma_i$ in the proof of Lemma~\ref{lem:Covello} shows that $P_j$ is $H$-invariant. So from Lemma~\ref{lem:rho}\,\enref{rho-4} we conclude that $D$~and $C := D \rtimes P_j$ are $H$-invariant.

From $\sigma_j \not \in Z$ and Lemma~\ref{lem:rho}\,\enref{rho-2} it follows that $\bar{D}$ is $M_Z$, which is a complement of $\bar{N}$ in~$\bar{T}_j$.  Moreover, $D$~and $M_Z$ are elementary abelian of the same rank, so $D \cap K = 1$ and $D \cap N \cong \bar{D} \cap \bar{N} = 1$. Hence $C \cap N = (T_j \cap C) \cap N = D \cap N = 1$, and $C$ is a complement of $N$~in $P_n$.

Now suppose case~\enref{N2-2} applies. Let $D \leq T_j$ be the subgroup generated by all $P_j$-conjugates of $\sigma_j$. Then $D$ is elementary abelian, $D \cap K = 1$, and $\bar{D}$ is a complement to $\bar{N}$~in $\bar{T}_j$. Hence $C = D \rtimes P_j$ is a complement to $N$ in $P_n$.
\end{proof}

\begin{proof}[Proof of Theorem~\ref{thm:MaschkeSpn}]
If $p=2$ then we may take $H=1$ by Proposition~\ref{prop:AutPn}\@. For odd~$p$,
Corollary~\ref{cor:isHall} says that the subgroup $H \leq S_{p^n}$ of Lemma~\ref{lem:Covello} is a Hall $p'$-subgroup of the normalizer of $P_n = \langle \sigma_0,\ldots,\sigma_{n-1}\rangle$. The result for this $H$ follows from Proposition~\ref{prop:ExCompl}\@.
\end{proof}

\section{Examples}
\label{sec:ex}

\begin{ex}
\label{ex:uniserial-3}
This example concerns Lemma~\ref{lem:u1}: it demonstrates that \enref{u1-3-1} does not follow from~\enref{u1-3-2}\@.
For $p^n = 3^2$ let $M$ be the length~$9$ uniserial $P_2$-module $M = (\f[3])^9$. Consider $v = (v_1,v_2) \in M^2$ given by
\begin{xalignat*}{2}
v_1 & = (1,0,0,0,0,0,0,0,0) & v_2 & = (0,0,0,-1,1,0,0,0,0) \, .
\end{xalignat*}
Note that $v_1 \not \in [P_2,M]$ and $v_2 \in [P_2,M]$: so~\enref{u1-3-2} is satisfied. Setting
\begin{xalignat*}{2}
a & = \sigma_1 - \Id = (0\;1\;2) - \Id  \in \f[3]P_2 & b & = {}^{\sigma_0}a = (3\;4\;5) - \Id  \in \f[3]P_2
\end{xalignat*}
we see that $v$ fails to satisfy \enref{u1-3-1}, since
\begin{xalignat*}{2}
av_1 & = (-1,1,0,0,0,0,0,0,0) & av_2 & = \underline{0} \\ bv_1 & = \underline{0} & bv_2 & = (0,0,0,1,1,1,0,0,0) \, .
\end{xalignat*}
Since $0 \neq av \in M \oplus 0$ and $0 \neq bv \in 0 \oplus M$, it follows that $\soc(M^2) \subseteq M_v$. So \enref{u1-2} is also violated, as $C_{M^2}(P_2) = \soc(M^2)$; and since $\soc(M^2)$ has three separate dimension one submodules, $M_v$ is not uniserial either, i.e.\@  \enref{u1-1}~is violated too.
\end{ex}

The remaining examples concern normal subgroups of $P_n$.

\begin{rem}
\label{rem:constructingC}
We need a method for determining whether $N$ has a complement, and constructing an $H$-invariant complement~$C$ if there is one.
\item
The proof of \enref{ExCompl-3}${}\Rightarrow{}$\enref{ExCompl-2} in Proposition~\ref{prop:ExCompl} can readily be adapted for this purpose.
First one checks whether $\bar{N}$ is a direct summand of~$\bar{T}_j$, possibly using the equivalent conditions of Lemma~\ref{lem:u3}\@. If $\bar{N}$ is a direct summand, then there are three possibilities:
\begin{enumerate}
\item \label{enum:constructingC-1}
If $\bar{N}$ has a complement of the form $M_Z$ with $Z \subseteq \{\sigma_{j+1},\ldots,\sigma_{n-1}\}$ then we take $C = \langle X \rangle$ for
$X = \{\sigma_0,\ldots,\sigma_{j-1} \} \cup \{ \rho_i \mid \sigma_i \in Z \}$.
This is the case $\sigma_j \not \in Z$ and $N \nleq T_{j+1}P'_n$.
\item \label{enum:constructingC-2}
If $NP'_n/P'_n = T_{j+1}P'_n/P'_n$ then $\bar{N}$ has complement $M_Z$ for $Z = \{\sigma_j\}$. We take $C = \langle \sigma_0,\ldots,\sigma_j \rangle$.
\item \label{enum:constructingC-3}
If $NP'_n/P'_n \lneq T_{j+1}P'_n/P'_n$ then $N$ has no complement in $P_n$.
\end{enumerate}
Sometimes it may be better to begin by comparing $NP'_n/P'_n$ and $T_{j+1}P'_n/P'_n$.
\end{rem}

\begin{ex}
\label{ex:GammaWontDo}
On why $\rho_i$ replaces $\sigma_i$ in Case~\enref{constructingC-1} of Remark~\ref{rem:constructingC}\@.

For $p^n = 3^3$ let $N$ be the normal closure of $\langle \sigma_0 \rangle$ in~$P_3$. Then $j = 0$, so $K = P_3'$ and $\bar{T}_0$ is the $\f[3]$-vector space with basis $\sigma_0 K$, $\sigma_1 K$, $\sigma_2 K$. As $\bar{N}$ has basis $\sigma_0 K$, it has complement $M_Z$ for $Z = \{ \sigma_1, \sigma_2 \}$. Case~\enref{constructingC-1} of Remark~\ref{rem:constructingC} says that $C = \langle \rho_1, \rho_2 \rangle$ is an $H$-invariant complement of $N$~in $P_3$.

Since this complement is elementary abelian, it has order~$3^2$. By contrast, $\langle \sigma_1, \sigma_2 \rangle \cong P_2$ has order $3^4$. Hence $\langle \sigma_1, \sigma_2 \rangle \cong P_2$ is not a complement of~$N$. In particular, $[\sigma_1,\sigma_2] \in \langle \sigma_1, \sigma_2 \rangle \cap N$, since $P'_3 = K \leq N$.
\end{ex}

\begin{ex}
\label{ex:gamma-1}
This example features Case~\enref{constructingC-2} of Remark~\ref{rem:constructingC}\@. More significantly, it demonstrates that the normal subgroup $N$ need not be $H$-invariant.

For $p^n = 3^3$ let $N$ be the normal closure of $\langle \gamma \sigma_2 \rangle$ in~$P_3$, where $\gamma = \sigma_1 \cdot {}^{\sigma_0}\sigma_1 \cdot {}^{\sigma_0^2}\sigma_1$ is the product of the three $\langle \sigma_0 \rangle$-conjugates of $\sigma_1$.

Then $N$ has depth $j=1$, and $NP'_3/P'_3 = \langle \sigma_2 \rangle P'_3/P'_3 = T_2 P'_3/P'_3$. We are in Case~\enref{constructingC-2} of Remark~\ref{rem:constructingC} -- proivded that $\bar{N}$ does have a complement.

$\bar{T}_1$ is a direct sum of two copies of the uniserial $\f[3]P_1$-module $A^1$: one on $\sigma_1 K$, and the other on $\sigma_2 K$. Moreover, $\bar{N}$ is generated by $v = \gamma K + \sigma_2K$. But $\gamma K$ lies in the socle of the summand on $\sigma_1 K$, hence $v$ satisfies Lemma~\ref{lem:u1}\,\enref{u1-3} with $i=2$. So $v$ is a generating set for~$\bar{N}$ satisfying the conditions of Lemma~\ref{lem:u2}, meaning that $\bar{N}$ is a direct summand of~$\bar{T}_1$ by Lemma~\ref{lem:u3}\@. We conclude that $N$ does have an $H$-invariant complement in~$P_3$.

By Case~\enref{constructingC-2}, one $H$-invariant complement is $C = \langle \sigma_0, \sigma_1 \rangle$.

Observe that if $\pi_1=\sigma_2,\pi_2,\ldots,\pi_9$ are the nine $P_3$-conjugates of~$\sigma_2$ (see Example~\ref{ex:S27-2}), then
\[
N = \left\{ \gamma^{\sum_{i=1}^9 e_i} \prod_{i=1}^9 \pi_i^{e_i}  \:\middle|\: e_1,\ldots,e_9 \in \zz  \right \} \, .
\]
So as $\eta_2$ fixes $\sigma_0,\sigma_1$ and inverts $\sigma_2$, we have
\[
{}^{\eta_2} N = \left\{ \gamma^{-\sum_{i=1}^9 e_i} \prod_{i=1}^9 \pi_i^{e_i}  \:\middle|\: e_1,\ldots,e_9 \in \zz  \right \}  \neq N \, .
\]
So the normal subgroups $N$~and ${}^{\eta_2} N$ of $P_3$ fail to be $H$-invariant -- and yet each of them has a $H$-invariant complement in $C$.
\end{ex}

\begin{ex}
This example features Case~\enref{constructingC-3} of Remark~\ref{rem:constructingC}\@.

As in Example~\ref{ex:gamma-1} we let $N$ be the normal closure of $\langle \gamma \sigma_2 \rangle$, but this time we take $p^n = 3^4$, and so $N$ is the normal closure in~$P_4$. Once more, the depth is $j=1$ and $\bar{N}$ is uniserial of length $3$ and hence a direct summand of $\bar{T}_1$. However this time $NP'_4/P'_4 = \langle \sigma_2 \rangle P'_4/P'_4$  is a proper subgroup of $T_2 P'_4/P'_4 = \langle \sigma_2,\sigma_3 \rangle P'_4/P'_4$. So $N$ does not have a complement in~$P_4$, even though $\bar{N}$ is a direct summand of $\bar{T}_1$.
\end{ex}

\begin{ex}
The $H$-invariant complement need not be unique. Also, the distinction between cases \enref{constructingC-1}~and \enref{constructingC-2} of Remark~\ref{rem:constructingC} is slightly aribtrary: for $N \nleq T_{j+1} P'_n$ there may be $Z$ with $\bar{T}_j = \bar{N} \oplus M_Z$ and $\sigma_j \in Z$.

For $p^n=3^2$ let $N \trianglelefteq P_2$ be the normal closure of $\langle \sigma_0 \sigma_1 \rangle$. This has depth $j=0$, so $\bar{T}_0 = P_2/P'_2$ is the $\f[3]$-vector space with basis $\sigma_0K, \sigma_1K$, and $\bar{N}$ is the subspace spanned by $\sigma_0 K + \sigma_1 K$. So we may take $Z = \{\sigma_1\}$, obtaining the $H$-invariant complement $\langle \rho_1 \rangle$; or we may take $Z = \langle \sigma_0 \rangle$, obtaining the $H$-invariant complement $\langle \sigma_0 \rangle$. Observe that $\langle \sigma_1 \rangle$ is a third $H$-invariant complement.
\end{ex}

\begin{ex}
\label{ex:ExCompl}
In this example, $\bar{N}$ is not a direct summand of $\bar{T}_j$.

For $p^n = 3^4$ we let $N$ be the normal closure of $\langle \beta \rangle$ in~$P_4$, for $\delta = \sigma_2 \cdot {}^{\sigma_0}( \sigma_3^{-1} \cdot {}^{\sigma_1} \sigma_3)$. So $j=2$ and $\bar{T}_2$ is the direct sum of two copies of $A^2$, which is uniserial of length~$9$; and one can verify that $\delta K$ corresponds to the element~$v$ of Example~\ref{ex:uniserial-3}\@. So $\bar{N}$ is not a direct summand of $\bar{T}_2$.
\end{ex}

\section{Partition subgroups}
\label{sec:partition}

\begin{rem}
\label{rem:PartitionSubgroups}
Following Weir \cite[p.~537]{Weir} we define $A^{n-1}_i$ inductively for $i \geq 0$ by $A^{n-1}_0 = A^{n-1}$ and $A^{n-1}_{i+1} = [P_n,A^{n-1}_i]$. Then each $A^{n-1}_i$ is normal in~$P_n$, whence $A^{n-1}_i \leq A^{n-1}_{i-1}$. By \cite[Theorem~2]{Weir}, the factor group $A^{n-1}_{i-1}/A^{n-1}_i$ is cyclic of order~$p$ for all $i \leq \log_p (\left|A^{n-1}\right|) = p^{n-1}$, and $A^{n-1}_{p^{n-1}} = 1$.

Since $P_n = A^{n-1} \rtimes P_{n-1}$, one may view $A^j_i$ as a subgroup of $P_n$ for all $0 \leq j \leq n-1$. Since $A^{n-1}_i$ is normal in $P_n$, it follows that every product of the form $Q = A^0_{i_0} A^1_{i_1} \cdots A^{n-1}_{i_{n-1}}$ is a subgroup of~$P_n$. Weir calls the subgroups of this form \emph{partition subgroups}~\cite[p.~538]{Weir}\@.

Observe that the depth of the partition subgroup $A^0_{i_0} A^1_{i_1} \cdots A^{n-1}_{i_{n-1}}$ is the smallest $j$ such that $i_j < p^j$. Weir~\cite[Theorem~4]{Weir} shows that a depth $j$ partition subgroup is normal in~$P_n$ if and only if $i_k \leq p^j$ for all $k \geq j$.
\end{rem}

\begin{lem}
\label{lem:part-0}
$T'_j$ is the partition subgroup $A^{j+1}_{p^j} \cdots A^{n-1}_{p^j}$.
\end{lem}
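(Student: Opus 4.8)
The plan is to prove the two inclusions separately. For the reduction, recall that under the isomorphism $P_n\cong P_{n-j}\wr P_j$ the subgroup $T_j$ is the base group, a direct product $(P_{n-j})^{p^j}$ of $p^j$ copies of $P_{n-j}$; hence $K:=[T_j,T_j]$ is the direct product of the derived subgroups of these $p^j$ copies, and in particular $|K|=|P'_{n-j}|^{p^j}$.

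The heart of the argument is the identity
\[
A^k_{p^j}=[T_j^{(k)},A^k]\qquad(j+1\le k\le n-1),
\]
where $T_j^{(k)}$ denotes the base group of $P_k\cong P_{k-j}\wr P_j$. One first checks that $T_j^{(k)}\le T_j$ and $A^k\le T_j$ (both because $T_j\trianglelefteq P_n$ contains $\sigma_j,\dots,\sigma_{n-1}$), and that $T_j^{(k)}$ normalizes $A^k$ (since $T_j^{(k)}\le P_k$ and $A^k\trianglelefteq P_{k+1}$). Now $A^k$ is the base group of $P_{k+1}=C_p\wr P_k$, i.e.\ the permutation $\f P_k$-module on the $p^k$ points on which $P_k$ acts transitively; partitioning those points according to their first $j$ coordinates realises $A^k=\bigoplus_{t\in\f^j}B_t$ as a sum of $p^j$ blocks of size $p^{k-j}$, where the top $P_j$ of $P_k$ permutes the $B_t$ regularly and the $t$-th direct factor $(P_{k-j})^{(t)}$ of $T_j^{(k)}$ acts on $B_t$ as a transitive permutation module and centralizes the other blocks. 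Consequently $[T_j^{(k)},A^k]$ is the direct sum over $t$ of the codimension-one submodules $[(P_{k-j})^{(t)},B_t]$, so it has codimension $p^j$ in $A^k$. Since it is a $P_k$-submodule of $A^k$ and $A^k$ is uniserial of length $p^k$ under $P_k$ by Lemma~\ref{lem:ux1}, and since $A^k_i$ has codimension $i$ (Remark~\ref{rem:PartitionSubgroups}), the codimension identifies $[T_j^{(k)},A^k]$ as $A^k_{p^j}$.

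Granting the identity, $A^k_{p^j}=[T_j^{(k)},A^k]\le[T_j,T_j]=K$ for each $k$, whence $A^{j+1}_{p^j}\cdots A^{n-1}_{p^j}\le K$; equality then follows by comparing orders. The order of a partition subgroup is the product of the orders of its factors (an easy induction on $n$, splitting off $A^{n-1}_{i_{n-1}}\le A^{n-1}\trianglelefteq P_n$ and using $P_n=A^{n-1}\rtimes P_{n-1}$), so $|A^{j+1}_{p^j}\cdots A^{n-1}_{p^j}|=\prod_{k=j+1}^{n-1}p^{p^k-p^j}$; while $|K|=|P'_{n-j}|^{p^j}$ with $|P'_m|=|P_m|/|P_m/P'_m|=p^{(p^m-1)/(p-1)-m}$, using $|P_m|=p^{(p^m-1)/(p-1)}$ and $P_m/P'_m\cong C_p^m$ (induct on $P_m=C_p\wr P_{m-1}$, where the coinvariants of the transitive permutation module $A^{m-1}$ contribute a single $C_p$). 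A short computation shows the two exponents both equal $\tfrac{p^n-p^{j+1}}{p-1}-(n-1-j)p^j$, so $|A^{j+1}_{p^j}\cdots A^{n-1}_{p^j}|=|K|$ and the two subgroups coincide.

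The step I expect to require the most care is the displayed identity, specifically pinning down how the wreath decomposition $P_k\cong P_{k-j}\wr P_j$ sits inside the permutation module $A^k$ coming from the larger wreath product $P_{k+1}$: that the blocks $B_t$ are precisely the orbits of the base subgroup $T_j^{(k)}$, permuted regularly by the top $P_j$, so that $[T_j^{(k)},A^k]$ loses exactly one dimension per block. Everything else — the reduction, the inclusion, and the two order computations — should be routine bookkeeping once this structural picture is in place.
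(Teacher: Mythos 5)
Your argument is correct, but it is not the route the paper takes. The paper's proof is a two-line appeal to an external result, Weir's Lemma~2: for $x \in T_k \setminus T_{k+1}$ the smallest normal subgroup of $P_{s+1}$ containing $[A^s,x]$ is $A^s_{p^k}$; combined with the commutativity of the $A^s$ this identifies $[T_j,T_j]$. You instead stay entirely inside the paper's own toolkit: you exhibit $[T_j^{(k)},A^k]$ as a $P_k$-submodule of codimension $p^j$ in $A^k$ via the block decomposition of the permutation module under $P_k \cong P_{k-j} \wr P_j$, and then uniseriality (Lemma~\ref{lem:ux1}) pins it down as $A^k_{p^j}$; the reverse inclusion comes from an order count using $\abs{K} = \abs{P'_{n-j}}^{p^j}$ and $P_m/P'_m \cong C_p^m$. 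I have checked the two exponents and they do both equal $\frac{p^n-p^{j+1}}{p-1}-(n-1-j)p^j$, and your justification that the order of a partition subgroup is the product of the orders of its factors is sound. What your route buys is self-containedness -- no reliance on Weir's Lemma~2 -- at the price of explicit order bookkeeping; the paper's route is shorter but leans on the reference. One cosmetic slip: $P_j$ permutes the $p^j$ blocks $B_t$ transitively, not regularly (for $j \geq 2$ the block stabilizers are nontrivial), but nothing in your argument actually uses regularity -- only that $[T_j^{(k)},A^k]$ is the direct sum of the $p^j$ codimension-one augmentation submodules and is $P_k$-invariant because $T_j^{(k)} \trianglelefteq P_k$.
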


\begin{proof}
Suppsose that $k \leq s \leq n-1$.
Weir~\cite[Lemma~2]{Weir} shows that if $x \in T_k \setminus T_{k+1}$ then the smallest normal subgroup of $P_{s+1}$ containing $[A^s,x]$ is $A^s_{p^k}$. This and the fact that $A^j$ is abelian imply the result.
\end{proof}

\begin{prop}
\label{prop:part-1}
Suppose that $N = A^j_{i_j} A^{j+1}_{i_{j+1}} \cdots A^{n-1}_{i_{n-1}}$ is a depth $j$ normal partition subgroup of~$P_n$. Then the following three statements are equivalent:
\begin{enumerate}
\item \label{enum:part-1-1}
$N$ has a complement in~$P_n$.
\item \label{enum:part-1-2}
$N$ has an $H$-invariant complement in~$P_n$.
\item \label{enum:part-1-3}
$i_j = 0$ and $i_k \in \{0, p^j\}$ for all $j \leq k \leq n-1$.
\end{enumerate}
\end{prop}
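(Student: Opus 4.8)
The plan is to deduce the proposition from Proposition~\ref{prop:ExCompl}. That result already gives \enref{part-1-1}${}\Leftrightarrow{}$\enref{part-1-2}, and says these hold precisely when $N/K$ is a direct summand of the $\f P_j$-module $T_j/K$ and $NP'_n/P'_n$ is not a proper subgroup of $T_{j+1}P'_n/P'_n$; here $K := T'_j = [T_j,T_j]$, and $K \le N$ by Proposition~\ref{prop:j1}. So it suffices to show these two conditions together amount to~\enref{part-1-3}. As in Lemma~\ref{lem:B-elAb} write $\bar U = UK/K$; then $\bar T_j = \bigoplus_{k=j}^{n-1}S_k$, where $S_k$ is the $\f P_j$-submodule generated by $\sigma_k K$ and is uniserial of length $p^j$.

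The crux is to locate $\bar N$ in this decomposition. Each $A^k$ ($j \le k \le n-1$) is generated by conjugates of $\sigma_k \in T_j$, so $A^k \le T_j$; hence for $m \ge j$ the commutator $[\sigma_m,\sigma_k]$ lies in $[T_j,T_j] = K$, which forces $\bar A^k = S_k$ (modulo $K$ every $P_k$-conjugate of $\sigma_k$ is a $P_j$-conjugate of $\sigma_k$). Likewise $[P_n,A^k_{i-1}]$ and $[P_j,A^k_{i-1}]$ have the same image modulo $K$, giving $\bar A^k_i = [P_j,\bar A^k_{i-1}]$; as $S_k$ is uniserial of length $p^j$, $\bar A^k_i$ is therefore the unique submodule of $S_k$ of codimension $\min(i,p^j)$. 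Since $N$ is a normal partition subgroup of depth $j$, Weir's criterion (Remark~\ref{rem:PartitionSubgroups}) gives $i_k \le p^j$ for $k \ge j$ and $i_j < p^j$; hence $\bar N = \bigoplus_{k=j}^{n-1}\bar A^k_{i_k}$, the $k$-th summand being the codimension-$i_k$ submodule of $S_k$, uniserial of length $p^j - i_k$. Pinning down this identification uniformly in $p$ is the step I expect to require the most care, even though it is only commutator bookkeeping.

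Granting it, both conditions are short. For the first: $\bar T_j \cong (A^j)^{n-j}$ and $A^j$ is indecomposable, so by Krull--Schmidt a submodule is a direct summand only if it is a direct sum of copies of $A^j$; hence $\bar N = \bigoplus_k \bar A^k_{i_k}$ is a direct summand exactly when every nonzero summand has full length $p^j$, i.e.\ exactly when $i_k \in \{0,p^j\}$ for all $j \le k \le n-1$ (the converse being immediate, with complement $\bigoplus_{i_k=p^j}S_k$). Since $i_j < p^j$ this forces $i_j = 0$, putting us in the situation of~\enref{part-1-3}. The second condition is then automatic: $i_j = 0$ gives $S_j = \bar A^j \le \bar N$, whereas $\bar T_{j+1} + [P_j,\bar T_j] = [P_j,S_j]\oplus\bigoplus_{k\ge j+1}S_k$ contains only the proper submodule $[P_j,S_j]$ of $S_j$, so $\bar N + [P_j,\bar T_j]$ is not contained in $\bar T_{j+1} + [P_j,\bar T_j]$; by Lemma~\ref{lem:notSmall}\,\enref{notSmall-3}, $NP'_n/P'_n$ is then not a proper subgroup of $T_{j+1}P'_n/P'_n$. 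Hence the conditions of Proposition~\ref{prop:ExCompl} hold if and only if $i_k \in \{0,p^j\}$ for all $j \le k \le n-1$, which is~\enref{part-1-3}.
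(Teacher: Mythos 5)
Your proposal is correct and follows essentially the same route as the paper: reduce to Proposition~\ref{prop:ExCompl}, identify $\bar{N}$ as $\bigoplus_{k=j}^{n-1} A^k_{i_k}/A^k_{p^j}$ with each summand uniserial of length $p^j-i_k$ inside the length-$p^j$ summands of $\bar{T}_j$, conclude that $\bar{N}$ is a direct summand iff $i_k\in\{0,p^j\}$, and observe that $i_j=0$ (forced since depth~$j$ rules out $i_j=p^j$) makes the condition on $NP'_n/P'_n$ automatic. Your direct appeal to Krull--Schmidt in place of citing Lemmas \ref{lem:u2}~and \ref{lem:u3}, and your more explicit commutator bookkeeping identifying $\bar{A}^k_i$ with $[P_j,\bar{A}^k_{i-1}]$, are only minor elaborations of what the paper leaves implicit.
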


\begin{proof}
\enref{part-1-1}~and \enref{part-1-2} are equivalent by Proposition~\ref{prop:ExCompl}, and it suffices to show that~\enref{part-1-3} is equivalent to $\bar{N} = N/T'_j$ being a direct summand of the $\f P_j$-module $\bar{T}_j$, and $NP'_n/P'_n$ not being a proper subgroup of $T_{j+1}P'_n/P'_n$.

Since $A^j_{p^j} = 1$, the $\f P_j$-module $\bar{N} = N/T'_j$ is the direct sum
\[
\bar{N} = \bigoplus_{k=j}^{n-1} A^k_{i_k} / A^k_{p^j} \, .
\]
Now, $A^k_{i_k} / A^k_{p^j}$ is uniserial of length $p^j - i_k$, whereas $\bar{T}_j$ is a direct sum of several copies of a length $p^j$ uniserial module. By Lemmas \ref{lem:u3}~and \ref{lem:u2} it follows that $\bar{N}$ is a direct summand of $\bar{T}_j$ if and only if $i_k \in \{0, p^j\}$ for all $j \leq k \leq n-1$.
Finally, if $i_j = 0$ then $NP'_n/P'_n$ is not subgroup of $T_{j+1}P'_n/P'_n$, let alone a proper subgroup; whereas $i_j=p^j$ would mean that $N$ has depth $j+1$, a contradiction.
\end{proof}

\begin{lem}
\label{lem:part-2}
If $N \trianglelefteq P_n$ has a complement then there is a partition subgroup $Q \trianglelefteq P_n$ such that $N$~and $Q$ have a common $H$-invariant complement.
\end{lem}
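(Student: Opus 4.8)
The plan is to take the explicit $H$-invariant complement of $N$ produced in the proof of Proposition~\ref{prop:ExCompl} and to exhibit a normal partition subgroup of $P_n$ that this same complement also complements. First I would set $j := \operatorname{depth}(N)$ and $K := T'_j$, so $K \leq N$ by Proposition~\ref{prop:j1}. Since $N$ has a complement, the proof of \enref{ExCompl-3}${}\Rightarrow{}$\enref{ExCompl-2} in Proposition~\ref{prop:ExCompl} yields an $H$-invariant complement of the shape $C = D \rtimes P_j$, where $D = C \cap T_j$ is elementary abelian, $D \cap K = 1$, and $\bar{D} = DK/K$ equals $M_Z$ for some $Z \subseteq \{\sigma_j,\ldots,\sigma_{n-1}\}$ with $\bar{T}_j = \bar{N} \oplus M_Z$; moreover one is in one of the two cases $\sigma_j \notin Z$ (Case~\enref{N2-1} of Lemma~\ref{lem:N2}) or $Z = \{\sigma_j\}$ (Case~\enref{N2-2}). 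From here on only the fact that $C$ complements $N$ is used.

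Next I would put $Z^c = \{\sigma_j,\ldots,\sigma_{n-1}\} \setminus Z$ and define the candidate partition subgroup
\[
Q \;=\; A^j_{e_j} A^{j+1}_{e_{j+1}} \cdots A^{n-1}_{e_{n-1}}, \qquad e_i = \begin{cases} p^j, & \sigma_i \in Z,\\ 0, & \sigma_i \in Z^c. \end{cases}
\]
By Remark~\ref{rem:PartitionSubgroups} this is a partition subgroup: in Case~\enref{N2-1} it has depth $j$, while in Case~\enref{N2-2} one has $A^j_{p^j} = 1$ and $Z = \{\sigma_j\}$, so that $Q$ reduces to $T_{j+1}$ and has depth $j+1$. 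In both cases every exponent that occurs is $0$ or $p^j$, so Weir's normality criterion (Remark~\ref{rem:PartitionSubgroups}) gives $Q \trianglelefteq P_n$; also $Q \leq T_j$ since $A^i \leq T_i \leq T_j$ for $i \geq j$, and $K \leq Q$ since $K = A^{j+1}_{p^j}\cdots A^{n-1}_{p^j}$ by Lemma~\ref{lem:part-0} while each $A^i_{p^j}$ with $i>j$ lies in the factor $A^i_{e_i}$ of $Q$.

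The heart of the argument is to identify $\bar{Q} = Q/K$ inside $\bar{T}_j = \bigoplus_{i=j}^{n-1} M^{(i)}$, where $M^{(i)} \leq \bar{T}_j$ denotes the uniserial summand generated by $\sigma_i K$ (Lemma~\ref{lem:B-elAb}). Modulo $K$ the factors $A^i_{p^j}$ with $\sigma_i \in Z$ vanish, so $\bar{Q}$ is the image of $\prod_{\sigma_i \in Z^c} A^i$, and I would show this image is exactly $M_{Z^c} = \bigoplus_{\sigma_i \in Z^c} M^{(i)}$ by proving $A^i K / K = M^{(i)}$ for every $i \geq j$. For ``$\subseteq$'': $A^i$ is generated by the $P_i$-conjugates of $\sigma_i$, and writing $P_i = (P_i \cap T_j)\,P_j$ (legitimate since $P_j$ maps isomorphically onto $P_n / T_j$) and using $\sigma_i \in T_j$, one gets ${}^{tb}\sigma_i = {}^{t}({}^{b}\sigma_i) \in ({}^{b}\sigma_i)\,[T_j,T_j] = ({}^{b}\sigma_i)K$ for $t \in P_i \cap T_j$ and $b \in P_j$, so every $P_i$-conjugate of $\sigma_i$ reduces mod $K$ to a $P_j$-conjugate of $\sigma_i$; the inclusion ``$\supseteq$'' is immediate as $P_j \leq P_i$. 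Hence $\bar{Q} = M_{Z^c}$, and since $M_{Z^c} \oplus M_Z = \bigoplus_{i=j}^{n-1} M^{(i)} = \bar{T}_j$ by Lemma~\ref{lem:B-elAb},
\[
\bar{T}_j \;=\; \bar{Q} \oplus \bar{D} .
\]

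Finally I would deduce that $C$ complements $Q$. From $\bar{T}_j = \bar{Q} \oplus \bar{D}$ together with $D \cap K = 1$ and $K \leq Q$ one gets $D \cap Q = 1$ (the image $(D \cap Q)K/K$ lies in $\bar{D} \cap \bar{Q} = 0$, so $D \cap Q \leq K$, whence $D \cap Q \leq D \cap K = 1$) and $DQ = T_j$ (as $DQK = T_j$ and $K \leq Q$). Since $C \cap T_j = D$ and $Q \leq T_j$ this gives $C \cap Q = D \cap Q = 1$, while $CQ = D P_j Q = (DQ) P_j = T_j P_j = P_n$ because $Q \trianglelefteq P_n$. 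Thus $C$ is an $H$-invariant complement of the normal partition subgroup $Q$ as well as of $N$, which is what is required. I expect the only genuine obstacle to be the bookkeeping in the last two paragraphs — pinning down $\bar{Q}$ and running the two cases of Lemma~\ref{lem:N2} in parallel — together with the routine verification that $A^i K / K = M^{(i)}$; everything else is formal manipulation of complements already in hand.
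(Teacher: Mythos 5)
Your proposal is correct and follows essentially the same route as the paper: the paper also matches the set $Z$ indexing the complement $M_Z$ of $\bar N$ with the normal partition subgroup whose exponents are $p^j$ exactly at the positions in $Z$ (reducing to $T_{j+1}$ in the case $Z=\{\sigma_j\}$), so that $N$ and $Q$ share the $H$-invariant complement of Remark~\ref{rem:constructingC}. The only difference is that you verify directly that $\bar Q = M_{Z^c}$ and that $C$ complements $Q$, where the paper simply quotes this from the proof of Proposition~\ref{prop:part-1}.
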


\begin{proof}
From the proof of Proposition~\ref{prop:part-1} one sees that partition subgroups with complements always fall into Case~\enref{constructingC-1} of Remark~\ref{rem:constructingC}, with $Z = \{ \sigma_k \mid i_k = p^j \}$. Conversely, every $Z \subseteq \{\sigma_{j+1},\ldots,\sigma_{n-1}\}$ occurs in this way.

That leaves Case~\enref{constructingC-2}: $\langle \sigma_0,\ldots,\sigma_j \rangle$ is a complement of the partition subgroup $T_{j+1} = A^{j+1}_0 \cdots A^{n-1}_0$, which has depth $j+1$ rather than~$j$.
\end{proof}

\appendix

\section{Abelian subgroups of largest size}

\begin{prop}
\label{prop:bigAb}
Let $p$ be an arbitrary prime, $n \geq 2$ and $P_n$ a Sylow $p$-subgroup of $S_{p^n}$. Set
\begin{align*}
d & = \max \{ \abs{A} \mid \text{$A \leq P_n$, $A$ abelian} \} \, , \quad \text{and}
\\ \mathcal{M} & = \{ A \leq P_n \mid \text{$A$ abelian and $\abs{A} = d$} \} \, .
\end{align*}
Then
\begin{enumerate}
\item \label{enum:bigAb-1}
$d = p^{p^{n-1}}$, even in the case $n=1$.
\item \label{enum:bigAb-2}
If $p$ is odd then $\mathcal{M} = \{ A^{n-1} \}$.
\item \label{enum:bigAb-3}
If $p=2$ then $\abs{M} = 3^{2^{n-2}}$, and every $A \in \mathcal{M}$ lies in $T_{n-2} \cong (D_8)^{2^{n-2}}$.
\item \label{enum:bigAb-4} \emph{(see \cite[Thm 4.4.6]{Cov})}
If $p=2$ then $\{C \in \mathcal{M} \mid C \trianglelefteq P_n \} = \{ A^{n-1}, B, W \}$, where $B \cong (C_4)^{2^{n-2}}$ is the characteristic subgroup of Proposition~\ref{prop:t2} and $W$ is conjugate to $A^{n-1}$ under the action of the outer automorphism group. Moreover, $B$ is the only exponent four homocyclic group in~$\mathcal{M}$.
\end{enumerate}
\end{prop}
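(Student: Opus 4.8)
The plan is to induct on $n$, using the two wreath decompositions $P_n\cong P_{n-1}\wr C_p=T_1\rtimes P_1$ with $T_1\cong P_{n-1}^{p}$, and $P_n\cong C_p\wr P_{n-1}$ with base group $A^{n-1}\cong C_p^{p^{n-1}}$; the case $n=1$ is trivial. The one computation needed throughout is this: for $x\in P_n\setminus T_1$, writing $x=(g_0,\dots,g_{p-1})\tau$ with $\tau$ a $p$-cycle, the equations $v_i=g_iv_{\tau^{-1}(i)}g_i^{-1}$ determine $v=(v_i)\in C_{T_1}(x)$ from $v_0$, subject to one consistency condition placing $v_0$ in a centraliser in $P_{n-1}$; hence $C_{T_1}(x)\cong C_{P_{n-1}}(w)$ for a suitable product $w$ of the $g_i$, and $\abs{C_{T_1}(x)}\le\abs{P_{n-1}}=p^{(p^{n-1}-1)/(p-1)}$. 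For part~\enref{bigAb-1}: an abelian $A\le T_1$ embeds via the coordinate projections $\pi_i$ into $\prod_i\pi_i(A)$, so $\abs A\le(p^{p^{n-2}})^{p}=p^{p^{n-1}}$ by induction; an abelian $A\nleq T_1$ has $\abs{A:A\cap T_1}=p$, so $\abs A\le p\abs{C_{T_1}(x)}\le p^{1+(p^{n-1}-1)/(p-1)}\le p^{p^{n-1}}$; and $\abs{A^{n-1}}=p^{p^{n-1}}$, giving $d=p^{p^{n-1}}$. For part~\enref{bigAb-2}, $p$ odd: here $1+(p^{n-1}-1)/(p-1)<p^{n-1}$ strictly, so $A\in\mathcal M$ forces $A\le T_1$; equality in $\abs A\le\prod_i\abs{\pi_i(A)}$ then forces $A=\prod_i\pi_i(A)$ with each $\pi_i(A)$ of maximal order in its copy of $P_{n-1}$, whence $\pi_i(A)=A^{n-2}$ by induction. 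So $A$ is the product of the $p$ copies of $A^{n-2}$; being abelian, normal in $P_n$, and of order $d$, it is a maximal abelian normal subgroup, hence $A=A^{n-1}$ by Remark~\ref{rem:TnotChar}.

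For part~\enref{bigAb-3}, $p=2$: now $1+(2^{n-1}-1)=2^{n-1}$ exactly, so an $A\in\mathcal M$ lying outside $T_1$ would give $\abs{C_{T_1}(x)}=\abs{P_{n-1}}$, forcing $w\in Z(P_{n-1})$ and $A\cap T_1=C_{T_1}(x)\cong P_{n-1}$; since $A\cap T_1$ is abelian, $P_{n-1}$ would be abelian, forcing $n\le2$. Hence for $n\ge3$ every $A\in\mathcal M$ lies in $T_1$, and the equality case above together with induction places $A$ inside the copy of $T_{n-3}$ in each of the two $P_{n-1}$-factors of $T_1$; by associativity of the wreath product these two copies of $T_{n-3}$ together constitute $T_{n-2}$, so $A\le T_{n-2}$ (for $n=2$ this is vacuous, as $T_{n-2}=T_0=P_2$). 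Now $T_{n-2}\cong D_8^{2^{n-2}}$, every maximal abelian subgroup of a direct product is the product of maximal abelian subgroups of the factors, and $D_8$ has exactly three such subgroups (one $C_4$ and two Klein fours), each of order $4$. So $T_{n-2}$ has $3^{2^{n-2}}$ maximal abelian subgroups, all of order $4^{2^{n-2}}=d$; since an abelian subgroup of $P_n$ of order $d$ is maximal abelian, $\mathcal M$ is precisely this set.

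For part~\enref{bigAb-4}: writing $P_n=D_8\wr P_{n-2}=T_{n-2}\rtimes P_{n-2}$ with $P_{n-2}$ transitively permuting the $2^{n-2}$ factors, a product-form member $\prod_i C^{(i)}$ of $\mathcal M$ (with $C^{(i)}$ a maximal abelian subgroup of the $i$-th $D_8$) is automatically normal in $T_{n-2}$, since each such $C^{(i)}$ is normal in $D_8$; hence it is normal in $P_n$ iff it is $P_{n-2}$-invariant, i.e.\ iff all the $C^{(i)}$ coincide. This yields exactly three normal members: $C_4^{2^{n-2}}$, which is the characteristic subgroup $B$ of Proposition~\ref{prop:t2}; and the two diagonal powers of the Klein fours of $D_8$, one of which is the elementary abelian subgroup $A^{n-1}=T_{n-1}$, the other of which we call $W$. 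The nontrivial element of $\operatorname{Out}(D_8)$ interchanges the two Klein fours of $D_8$; applying it in each coordinate and extending by the identity on $P_{n-2}$ gives an automorphism of $P_n$ taking $A^{n-1}$ to $W$, necessarily outer because both $A^{n-1}$ and $W$ are normal in $P_n$. Finally $\prod_i C^{(i)}\cong C_4^{a}\times C_2^{2b}$, where $a=\#\{i:C^{(i)}=C_4\}$ and $a+b=2^{n-2}$; this is homocyclic of exponent four exactly when $b=0$, that is, exactly when it is $B$.

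The principal obstacle is the $p=2$ case, and specifically the observation that a maximal-order abelian subgroup lying outside $T_1$ would be forced to contain an isomorphic copy of the nonabelian group $P_{n-1}$ — this is what pushes everything into $T_{n-2}$ and pins the statement down. After that, parts~\enref{bigAb-3} and~\enref{bigAb-4} become elementary statements about the maximal abelian subgroups of $D_8^{2^{n-2}}$ and the action of $\operatorname{Out}(D_8)$, the only delicate point being the bookkeeping needed to match the various iterated wreath decompositions of $P_n$, in particular to identify $T_{n-2}$ with the product of the two copies of $T_{n-3}$. The odd-$p$ statement, by contrast, drops out quickly from the centraliser bound and Weir's uniqueness theorem.
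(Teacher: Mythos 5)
Your proof is correct, and its global architecture (induction on $n$ via $P_n\cong P_{n-1}\wr C_p$, splitting into $A\le T_1$ versus $A\nleq T_1$, projections onto the $P_{n-1}$-factors, and the reduction of parts \enref{bigAb-3} and~\enref{bigAb-4} to the maximal abelian subgroups of $D_8^{2^{n-2}}$) coincides with the paper's. The one genuinely different step is how you dispose of abelian subgroups not contained in the base group. You compute $C_{T_1}(x)\cong C_{P_{n-1}}(w)$ for the cycle product $w$, obtaining the bound $\abs{A}\le p\,\abs{P_{n-1}}$; this is a transparent, standard wreath-product computation, but for $p=2$ it meets $d$ exactly, so you must analyse the equality case and observe that it would force the abelian group $A\cap T_1$ to be isomorphic to the nonabelian group $P_{n-1}$. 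The paper instead sets $D=C\cap Q$ and notes that an $x\in C\setminus D$ fixes $D$ pointwise while mapping each nontrivial element of the projection $D_1$ into a different coordinate, so $D\cap D_1=1$ and hence $\abs{D}\le\abs{D_1}^{p-1}$; this yields the uniform strict bound $p^{\,p^{n-1}-p^{n-2}+1}<p^{\,p^{n-1}}$ for all primes once $n>2$, with no equality analysis needed. A second, minor divergence: in part~\enref{bigAb-2} you finish by observing that $\prod_i A^{n-2}$ is an abelian normal subgroup of maximal order and invoking Weir's uniqueness theorem (Remark~\ref{rem:TnotChar}), whereas the paper simply identifies $(A^{n-2})^p$ with $A^{n-1}$ through wreath-product associativity; both are legitimate, yours trading a small amount of bookkeeping for an external citation. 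Everything else, including the classification of the three normal members in part~\enref{bigAb-4} and the construction of the outer automorphism $\alpha\wr\Id$, matches the paper's argument.
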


\begin{proof}
\enref{bigAb-1}: For $n=1$ we have $d=p$, since $P_1 \cong C_p$, so assume $n \geq 2$. Then $P_n \cong P_{n-1} \wr C_p = Q \rtimes C_p$ for $Q = P_{n-1}^p$; and $d \geq p^{p^{n-1}}$ since $A^{n-1}$ is abelian.

Now suppose $n=2$. If  $C \leq P_2$ is abelian with $C \nleq Q$ then $C$ contains some $x \in P_2 \setminus Q$. As $Q = A^1$ is abelian, conjugation by $x$ acts on $Q = (C_p)^p$ by permuting the $p$~factors cyclically. Hence $C_Q(x)$ is the diagonal subgroup of $(C_p)^p$, which is cyclic of order~$p$. Since $C \cap Q \leq C_Q(x)$ we have
\[
\abs{C} = p \abs{C \cap Q} \leq p^2 \leq p^p = \abs{A^1} \, .
\]
So if $p$ is odd, then $\abs{C} < \abs{A^1}$, whence $d = p^p$ and $\mathcal{M} = \{ A^1 \}$. If $p=2$, then $P_2 \cong D_8$, so $d=2^2$ and $\mathcal{M}$ consists of the three maximal subgroups of~$D_8$.

Now suppose $n>2$. Again let $C \leq P_n$ be abelian with $C \nleq Q$. Set $D = C \cap Q$. From $\abs{P_n \::\: Q} = p$ it follows that $\abs{C \::\: D} = p$. Now, $D \leq Q = P_{n-1}^p$, so we may consider the projection $D_i$ onto the $i$th factor $P_{n-1}$. Then each $D_i$ is abelian, and $D \leq \bar{D} = \prod_{i=1}^p D_i$.

Pick $x \in C \setminus D$; then $x$ normalizes~$D$, hence conjugation by $x$ permutes the $D_i$ transitively, and so $\abs{D_i} = \abs{D_1}$ for all~$i$. Moreover $D \cap D_1 = 1$: for conjugation by~$x$ fixes $D$ pointwise, but it also maps every $1 \neq y \in D_1$ into one of the other factors~$D_i$. Hence $\abs{\bar{D} \::\: D} \geq \abs{D_1}$ and so $\abs{D} \leq \abs{D_1}^{p-1}$.

By induction we have $\abs{D_1} \leq p^{p^{n-2}}$. Hence
\[
\abs{C} = p \abs{D} \leq p \abs{D_1}^{p-1} \leq p^{p^{n-1} - p^{n-2} + 1} < p^{p^{n-1}} = \abs{A^{n-1}}\, .
\]
\item \enref{bigAb-2}, \enref{bigAb-3}:
The the proof of~\enref{bigAb-1} deals with the case $n=2$, so assume $n \geq 3$. Let $C \in \mathcal{M}$. Then $C \leq Q$ by the proof of~\enref{bigAb-1}, so as above we have $C \leq \bar{D} = \prod_{i=1}^p D_i$, with $D_i$ the projection of $C$~onto the $i$th factor of~$Q$. As $\bar{D}$ is abelian we have $C = \bar{D}$ by maximality, and again by maximality, $D_i$ lies in the $\mathcal{M}$ for $P_{n-1}$. These two cases follow by induction.
\item \enref{bigAb-4}: $T_{n-2}$ is the direct product of $2^{n-2}$ copies of $D_8$, each of which has three order four subgroups; two being elementary abelian, and one cyclic of order four. Proposition~\ref{prop:t2} shows that $B$ is characteristic. As $P_n \cong D_8 \wr P_{n-2}$ and $P_{n-2}$ acts by permuting the factors $D_8$ transitively, a normal subgroup must have the same projection onto every copy $D_8$: hence there are only three normal subgroup. Finally, let $\alpha \in \Aut(D_8)$ be the automorphism which interchanges the two elementary abelian subgroups of rank two: $\alpha$ can be constructed as an inner automorpism in $D_{16}$. Then the automorphism $\alpha \wr \Id$ of $P_n$ interchanges the two elementary abelian normal subgroups in~$\mathcal{M}$.
\end{proof}

\begin{cor}
\label{cor:pRank}
$P_n$ has $p$-rank $p^{n-1}$ for all primes~$p$. \qed
\end{cor}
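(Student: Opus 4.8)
The plan is to read off both bounds from results already proved. Recall that the $p$-rank of a finite $p$-group is the largest $r$ for which it contains an elementary abelian subgroup $C_p^r$, equivalently an elementary abelian subgroup of order $p^r$. For the upper bound: any elementary abelian subgroup of $P_n$ is in particular abelian, so by Proposition~\ref{prop:bigAb}\,\enref{bigAb-1} it has order at most $d = p^{p^{n-1}}$; hence its rank is at most $p^{n-1}$. This holds for every prime $p$ and every $n \geq 1$, including $n = 1$, where the assertion just says that $P_1 \cong C_p$ has $p$-rank $1 = p^0$.

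For the lower bound I would simply exhibit an elementary abelian subgroup of rank $p^{n-1}$. By Notation~\ref{notn:Weir}, the base group $A^{n-1} \cong C_p^{p^{n-1}}$ of the decomposition $P_n = C_p \wr P_{n-1} = C_p^{p^{n-1}} \rtimes P_{n-1}$ is elementary abelian, and it has rank exactly $p^{n-1}$. Combining the two bounds gives that the $p$-rank of $P_n$ equals $p^{n-1}$, as claimed. There is no real obstacle here: all the content sits in Proposition~\ref{prop:bigAb}\,\enref{bigAb-1}, whose proof by induction on $n$ has already been carried out; the corollary is just the remark that the extremal abelian subgroup $A^{n-1}$ is itself elementary abelian, so the bound on the order of an abelian subgroup translates directly into the bound on the rank of an elementary abelian one.
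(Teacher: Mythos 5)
Your proof is correct and is exactly the argument the paper intends by stating the corollary with an immediate \qed after Proposition~\ref{prop:bigAb}: the upper bound $p^{n-1}$ on the rank comes from the bound $d = p^{p^{n-1}}$ on orders of abelian subgroups in part~\enref{bigAb-1}, and the lower bound is witnessed by the elementary abelian subgroup $A^{n-1} \cong C_p^{p^{n-1}}$ of Notation~\ref{notn:Weir}. Nothing to add.
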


\ignore{
\subsection*{Acknowledgement}
The authors thank  Thomas Breuer for his comments and advice.
}

\noindent

\end{document}